\tikzset{pullback/.style={minimum size=1.2ex,path picture={
\draw[opacity=1,black,-,#1] (-0.5ex,-0.5ex) -- (0.5ex,-0.5ex) -- (0.5ex,0.5ex);%
}}}
\theoremstyle{plain}
\newtheorem{theorem}{Theorem}[section]
\newtheorem*{theorem*}{Theorem}
\newtheorem{proposition}[theorem]{Proposition}
\newtheorem{lemma}[theorem]{Lemma}
\newtheorem{corollary}[theorem]{Corollary}
\theoremstyle{definition}
\newtheorem{example}[theorem]{Example}
\newtheorem{remark}[theorem]{Remark}
\newtheorem{definition}[theorem]{Definition}
\DeclarePairedDelimiter\gen{\langle}{\rangle}
\newcommand{\Set}{\mathrm{Set}}
\newcommand{\meet}{\land}
\newcommand{\op}{\mathrm{op}}
\newcommand{\C}{\mathcal{C}}
\newcommand{\D}{\mathcal{D}}
\newcommand{\E}{\mathcal{E}}
\newcommand{\F}{\mathcal{F}}
\newcommand{\Z}{\mathbb{Z}}
\newcommand{\true}{\mathrm{true}}
\newcommand{\1}{\mathbf{1}}
\newcommand{\2}{\mathbf{2}}
\newcommand{\dq}[1]{``#1"}
\newcommand{\mo}[1]{{#1}_{\scalebox{0.6}{\text{mono}}}}
\newcommand{\ch}[1]{\chi_{#1}}
\newcommand{\ob}[1]{\mathrm{ob}(#1)}
\newcommand{\SubGrp}[1]{\mathrm{Sub}_{\mathrm{Group}}(#1)}
\newcommand{\ps}[1]{\Set^{{#1}^{\op}}}
\newcommand{\Sh}[1]{\mathrm{Sh}(#1)}
\newcommand{\id}[1]{\mathrm{id}_{#1}}
\newcommand{\mono}{rightarrowtail}
\newcommand{\defeq}{=}
\newcommand{\epi}{twoheadrightarrow}
\newcommand{\FinSet}{\mathrm{FinSet}}
\newcommand{\bL}{\text{[being a loop]}}
\newcommand{\bN}{\text{[not being a loop]}}
\newcommand{\bV}{\text{[being a vertex]}}
\newcommand{\subob}{\rightarrowtail}
\newcommand{\quo}{\twoheadrightarrow}
\newcommand{\Q}{\mathcal{Q}}
\newcommand{\Eq}{\mathrm{Eq}}
\newcommand{\comma}[2]{#1 \hspace{-1pt} \downarrow \hspace{-2pt} #2}
\newcommand{\DirGraph}{\mathrm{DirGraph}}
\newcommand{\sha}{\mathbf{a}}
\newcommand{\y}[1]{\mathrm{y}(#1)}
\newcommand{\ay}[1]{\sha\mathrm{y}(#1)}
\newcommand{\Par}{\mathrm{Par}}
\newcommand{\Lobj}{\Psi}
\newcommand{\Lmor}{\psi}
\newcommand{\Robj}{\Psi}
\newcommand{\Rmor}{\psi}
\newcommand{\excl}{!}
\newcommand{\R}{\mathbb{R}}
\newcommand{\Xiz}{\Xi_0}
\newcommand{\symG}[1]{\mathfrak{S}_{#1}}
\newcommand{\Aut}[1]{\mathrm{Aut}_{\Set} (#1)}
\newcommand{\spxi}[1]{\SubGrp{\Aut{#1}}}
\newcommand{\orb}{\mathrm{orb}}
\newcommand{\Cont}[2]{\mathrm{Cont}(#1,#2)}
\newcommand{\ceil}[1]{\lceil #1 \rceil}
\newcommand{\biangle}[1]{\langle #1 \rangle}
\newcommand{\dash}{dashrightarrow}
\newcommand{\Species}{\FinSet^{\FinSet_0}}
\newcommand{\Stab}{\mathrm{Stab}}
\newcommand{\newtopic}{\vspace{15pt}}
\DeclareMathOperator*{\colim}{colim}
\title{Internal Parameterization of Hyperconnected Quotients}
\author{Ryuya Hora}
\thanks{Graduate School of Mathematical Sciences, University of Tokyo. \url{hora@ms.u-tokyo.ac.jp}}
\subjclass[2020]{18B25}
\keywords{Topos, hyperconnected geometric morphism, internal semilattice}
\begin{document}

\begin{abstract}
One of the most fundamental facts in topos theory is the internal parameterization of subtoposes: the bijective correspondence between subtoposes and Lawvere-Tierney topologies. In this paper, we introduce a new but elementary concept, ``a local state classifier," and give an analogous internal parameterization of hyperconnected quotients (i.e., hyperconnected geometric morphisms from a topos). As a corollary, we obtain a solution to the Boolean case of the first problem of Lawvere's open problems.
\end{abstract}
\maketitle
\tableofcontents

\section{Introduction}\label{introduction}


Lawvere listed open problems in topos theory in \cite{OpenLawvere}. The first problem is as follows:
\begin{quote}
    \dq{Is there a Grothendieck topos for which the number of these quotients is not small? At the other extreme, could they be parameterized internally, as subtoposes are?}
\end{quote}
He asks whether the number of \emph{quotients} of a Grothendieck topos is small. Here, a quotient of a topos $\E$ refers to a (suitable equivalence class of) \emph{connected geometric morphism} from $\E$, i.e., a geometric morphism whose inverse image part is fully faithful.

Furthermore, for the case where the number of quotients is small, Lawvere further requires an \emph{internal parameterization} of them. The word \dq{internal parameterization} here means a bijective correspondence between quotients and \dq{internal structures.} Recall the case of subtoposes that Lawvere mentions in the quote.
The internal parameterization of subtoposes is the bijective correspondence between subtoposes of a topos $\E$ (i.e., geometric embedding \emph{into} $\E$) and Lawvere-Tierney topologies \emph{in} $\E$ \cite[see][Theorem A4.4.8]{johnstone2002sketchesv1}. Since a Lawvere-Tierney topology is defined as an internal structure (namely, internal semilattice idempotent homomorphism on the subobject classifier), this bijective correspondence is worth being called the internal parameterization of subtoposes. Lawvere seeks a similar internal parameterization for quotient toposes.

There are several motivations for obtaining an internal parameterization of quotients. First, it makes it possible to classify all quotients just by studying a specific object in the topos without dealing with vast amounts of data about the entire category. Also, correspondence with an internal structure provides a new perspective on quotients and may lead to a new operation on the class of quotients. (As we explain more concretely in a few paragraphs, our internal parameterization for \emph{hyperconnected quotients} realizes both advantages.)

However, only some previous works pay attention to internal parameterizations of quotients. Although some papers, including \cite{freyd1980axiom}\cite{rosenthal1982quotient}\cite{el2002simultaneously}, classify some limited classes of quotients, their focuses are on something other than internal parameterization. One exception is Henry's study \cite{henry2018localic} on the localic isotropy group, which classifies all \emph{atomic quotients} using internal structures. However, the class of atomic quotients is relatively small as a subclass of quotients. (In fact, the scope of our main theorem, hyperconnected quotients, properly includes it (see Example \ref{ExampleAtomicConnected})).


\newtopic
The main result of this paper is giving an internal parameterization of \emph{hyperconnected quotients}. (In this paper, a hyperconnected geometric morphism from a topos $\E$ is referred to as a hyperconnected quotient of $\E$, emphasizing the aspect as a quotient of the topos $\E$.) In detail, we introduce the notion of \emph{a local state classifier} and prove the following main theorem.
\begin{theorem*}[\ref{mainTheorem}]
Let $\E$ be a topos with a local state classifier $\{\xi_{X}\colon X\to\Xi\}_{X\in \ob{\E}}$ (for example, an arbitrary Grothendieck topos). Then the following three concepts correspond bijectively.
\begin{enumerate}
    \item hyperconnected quotients of $\E$
    \item internal filters of $\Xi$
    \item internal semilattice homomorphisms $\Xi\to \Omega$
\end{enumerate}
\end{theorem*}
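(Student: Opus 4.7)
The plan is to split the three-way bijection into the standard equivalence (2) $\Leftrightarrow$ (3), which is a generality about internal semilattices, and the harder equivalence (1) $\Leftrightarrow$ (2), which uses the defining property of the local state classifier. For (2) $\Leftrightarrow$ (3) I would invoke the internal fact that filters of a meet-semilattice $L$ in a topos are in bijection with internal meet-semilattice homomorphisms $L \to \Omega$, via $F \mapsto \chi_F$ and $h \mapsto h^{-1}(\true)$; both constructions, and the fact that they are mutually inverse, can be verified routinely in the Mitchell--B\'enabou language of $\E$.

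For the substantive equivalence (1) $\Leftrightarrow$ (2), I would identify each hyperconnected quotient of $\E$ with a full reflective subcategory $\F \hookrightarrow \E$ closed under subobjects and whose reflector preserves finite limits. Given such $\F$, I would define a subobject of $\Xi$ by collecting the local states realized in $\F$: informally, a generalized element $s \colon U \to \Xi$ lies in the filter iff $s$ factors locally through $\xi_X$ for some $X$ in $\F$. That this subobject is indeed an internal filter should follow from $\F$ containing the terminal object (giving the top) and being closed under binary products (giving closure under meets, assuming that the meet on $\Xi$ is realized in this way, which presumably is established earlier in the paper). Conversely, given an internal filter $F \subob \Xi$, I would define the associated subcategory to consist of those objects $X$ for which $\xi_X$ factors through $F$; closure under finite limits and subobjects should fall out of the filter axioms and the naturality of $\xi$, while reflectivity with a finite-limit-preserving reflector would be obtained by a coequalizer/pushout construction that forces local states outside $F$ to be collapsed.

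The crucial step is then to verify that these two assignments compose to the identity in both directions, and this is where the precise universal property of the local state classifier is expected to do the real work. Starting from a subcategory $\F$, reconstructing $\F$ from its filter should be nearly tautological once $\xi_X$ is understood as encoding the full local information of $X$. In the other direction, recovering $F$ from the subcategory it generates requires showing that every local state in $F$ is genuinely realized by some object of that subcategory; a natural candidate witness is $F$ itself, viewed as an object of $\E$ via the inclusion $F \subob \Xi$. I expect the main obstacle to be proving the reflectivity of the subcategory produced from $F$: closure under finite limits and subobjects does not by itself guarantee a reflector, and constructing a finite-limit-preserving reflection will likely require careful use of the generating role played by the family $\{\xi_X\}_{X \in \ob{\E}}$, together with the standing hypothesis that $\E$ admits a local state classifier (which holds for every Grothendieck topos).
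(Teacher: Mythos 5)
Your overall decomposition matches the paper's: the equivalence (2)$\leftrightarrow$(3) is indeed handled as a generality about internal semilattices (Proposition \ref{PropUniversalityOfSubobjectClassifierAsSemilattice}), and your definition of the subcategory attached to a filter $F\subob\Xi$ --- the objects $X$ whose $\xi_X$ factors through $F$ --- is exactly the paper's. But there is a genuine error in the substantive direction: a hyperconnected quotient is \emph{not} a reflective subcategory with finite-limit-preserving reflector (that is the description of a subtopos). Since the inverse image functor $f^*$ is the fully faithful inclusion and is \emph{left} adjoint to $f_*$, the essential image is a \emph{coreflective} full subcategory, closed under subobjects, whose associated comonad is left exact. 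Consequently your plan to produce the missing adjoint ``by a coequalizer/pushout construction that forces local states outside $F$ to be collapsed'' builds the wrong adjoint on the wrong side and would not yield a hyperconnected quotient. The correct construction goes by \emph{subobjects}, not quotients: the component of the counit at $Y$ is the pullback $S_Y\subob Y$ of $F\subob\Xi$ along $\xi_Y$, i.e.\ the largest subobject of $Y$ all of whose local states lie in $F$.

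Verifying that $S_Y$ really is the counit of a right adjoint to the inclusion is where the filter axioms earn their keep, and your sketch does not engage with this. One needs: (i) that a morphism $\Xi\to\Omega$ preserves $\top$ and $\meet$ exactly when $S_{\1}\cong\1$ and $S_{X\times Y}=S_X\times S_Y$ (this uses the colimit description of $\Xi^n$, hence cartesian closedness); and (ii) the inequality $\xi_X\le\xi_Y\circ f$ for an \emph{arbitrary} morphism $f\colon X\to Y$, not just a mono, which combined with the \emph{upward-closure} of $F$ shows that any $f\colon X\to Y$ with $X$ in the subcategory lifts through $S_Y$. Your proposal never identifies where upward-closure of the filter is used, and without (ii) the universal property of the counit does not follow. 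Once the counit is in place, the subcategory is comonadic via a left exact comonad, hence a topos, and closure under subobjects makes the inclusion hyperconnected; this replaces your reflectivity step entirely. Finally, in the direction from a quotient to a filter, the paper routes through the coherent family $\{\epsilon_X\colon GX\subob X\}$ and the correspondence between coherent families, subobjects of $\Xi$, and maps $\Xi\to\Omega$; your ``collect the local states realized in $\F$'' description lands on the same subobject but, as stated, does not explain why it is a well-defined (small) subobject of $\Xi$ or why it is a filter --- the latter follows from left exactness of the comonad via (i).
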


Our result gives a partial solution to the open problem in two ways. First, since a hyperconnected quotient is a particular case of a quotient, it is a solution for the subclass of quotients. The second, somewhat nontrivial, is that our result solves the case of Boolean toposes (Corollary \ref{mainCor}). 
For a Boolean Grothendieck topos, whose quotients are automatically hyperconnected, we establish the internal parameterization of all quotients in this paper (Corollary \ref{CorInternalparameterizationOfQuotientsOfBooleanTopos}). 
Thus, this paper might be a novel step toward solving the open problem, especially from the perspective of internal parameterization.

However, a hyperconnected quotient is not just a technical assumption for partially solving the open problem but has received much attention in topos theory. It naturally arises everywhere in topos theory and plays an important theoretical role. For example, for a topological group $(G,\tau)$, its continuous action topos $\Cont{G}{\tau}$ is a hyperconnected quotient of its discrete action topos $\ps{G}$ \cite[][III.9. and VII.3.]{maclane2012sheaves}\cite[][A4.6]{johnstone2002sketchesv1} The well-founded part of a topos is also a hyperconnected quotient of the topos \cite[][section 8]{freyd1980axiom}. Topos theory has other fundamental examples of hyperconnected quotients (see subsection \ref{SubsecHyperconnected}). It also has theoretical importance. For example, it plays a central role in the factorization system of geometric morphisms called hyperconnected-localic factorization, introduced in \cite{johnstone1981factorization}. 

The attempt to describe all hyperconnected quotients is not new in itself.
In \cite{rosenthal1982quotient}, Rosenthal shows that all hyperconnected quotients of a Grothendieck topos are constructed using the data called \emph{quotient systems}.

However, our result is new in the following three respects. Our description of hyperconnected quotients is internal, applies to a broader class of toposes, and utilizes a new concept, a local state classifier. We explain each of them in detail.

The first and decisive point is that our result realizes internal parameterization. While exhausting all hyperconnected quotients, Rosenthal's result does not construct a natural bijective correspondence, nor does it use internal structures. 
In this paper, we introduce the notion of a local state classifier $\Xi$ of a topos and establish a natural bijective correspondence between hyperconnected quotients, internal filters of $\Xi$, and internal semilattice homomorphisms $\Xi \to \Omega$.
It enables us to classify all hyperconnected quotients just by considering one object, a local state classifier $\Xi$, without considering the whole data of the category. For example, as we see in section \ref{SecExampleApplication}, classifications of all hyperconnected quotients of the directed graph topos, group action topos, and topos of combinatorial species \cite{joyal1981theorie} are reduced to the calculation of a specific finite graph, the lattice of subgroups of $G$, and subgroups of symmetric groups, respectively. The internal parameterization also enables us to consider relationships with other internal structures. For example, in section \ref{secConclusion}, we observe that Lawvere-Tierney topologies naturally act on the class of hyperconnected quotients. 

The second, relatively minor, novelty is that the scope of our theory is broader than the class of Grothendieck toposes. Although our theory does not apply to all elementary toposes, it applies to a sufficiently broad class of them, including all Grothendieck toposes. 

The third novelty is the new concept of a local state classifier itself. This concept, which plays a central role in this paper, is defined in elementary category-theoretic terms. Therefore we can consider it for general categories, not limited to toposes. It leaves room for theoretical developments in various category theories. We mention this point again at the end of this introduction.



\newtopic
We develop our theory in parallel with the case of subtoposes. We encounter sometimes expected, sometimes unexpected similarities and dualities. We list those analogies in Figure \ref{PictureAnalogy}.
\begin{figure}
    \centering
    \begin{tabular}{ m{2cm}|m{4.5cm}|m{4.5cm} }
  & embedding & hyperconnected \\ 
  \hline
 the \dq{small} part of  & surjection-embedding factorization &hyperconnected-localic factorization\\
  \hline
 a (external) description&Grothendieck topology (for a presheaf topos)& Quotient System \cite{rosenthal1982quotient} (for a Grothendieck topos) \\
 \hline
 the central semilattice& $\Omega$: subobject classifier&$\Xi$: local state classifier (Definition \ref{DefLocalStateClassifier})\\
 \hline
 In a presheaf topos, & $\Omega$ consists of subobjects of representable functors (i.e., sieves) & $\Xi$ consists of co-subobjects of representable functors (Example \ref{LSCofPresheafTopos}) \\
 \hline
 internal \dq{parameter} &$\Omega\to \Omega$: idempotent semilattice homomorphism (Lawvere-Tierney topology)& $\Xi \to \Omega$: semilattice homomorphism (main theorem \ref{mainTheorem})\\
\end{tabular}
    \caption{Similarity with the internal parameterization of embeddings}
    \label{PictureAnalogy}
\end{figure}

First, a motivating analogy is that both subtoposes and hyperconnected quotients are \dq{small} parts of factorization systems (see Figure \ref{PictureFactInt}). For an arbitrary Grothendieck topos $\E$, the number of surjective geometric morphisms from $\E$ is not necessarily small. However, that of subtoposes (i.e., geometric embedding into $\E$) is always small due to the internal parameterization. That is what we mean by the term \dq{small} part. Similarly, a hyperconnected quotient is a \dq{small} part of the hyperconnected-localic factorization. For an arbitrary Grothendieck topos $\E$, the number of localic geometric morphisms to $\E$ is not necessarily small. However, that of hyperconnected quotients (i.e., hyperconnected geometric morphisms from $\E$) is always small (which is immediately followed by Rosenthal's result \cite{rosenthal1982quotient}). 
From this point of view, it is quite natural to consider an internal parameterization of hyperconnected quotients. The above discussion shows that internal parameterization (that implies their smallness, as Lawvere mentions in the quote) is impossible for localic or surjective geometric morphisms. Therefore, the missing piece is the case of hyperconnected quotients (see Figure \ref{PictureFactInt})!
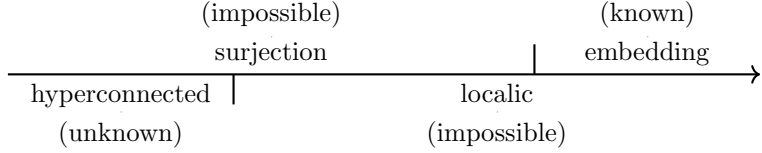
\begin{figure}
    \centering
    \begin{tikzpicture}
    \draw[black, thick, ->] (-5,0) -- (5,0);
    \draw[black, thick] (-2,-0.4) -- (-2,0);
    \draw[black, thick] (2,0) -- (2,0.4);
    \filldraw [black] (-1.5,0) circle (0pt) node[above]{surjection};
    \filldraw [black] (-1.5,0.5) circle (0pt) node[above]{(impossible)};
    \filldraw [black] (3.5,0) circle (0pt) node[above]{embedding};
    \filldraw [black] (3.5,0.5) circle (0pt) node[above]{(known)};
    \filldraw [black] (1.5,0) circle (0pt) node[below]{localic};
    \filldraw [black] (1.5,-0.5) circle (0pt) node[below]{(impossible)};
    \filldraw [black] (-3.5,0) circle (0pt) node[below]{hyperconnected};
    \filldraw [black] (-3.5,-0.5) circle (0pt) node[below]{(unknown)};
    \end{tikzpicture}
    \caption{Factorization systems and Internal parameterizations}
    \label{PictureFactInt}
\end{figure}

Second, for both parameterizations, the internal correspondents are internal semilattice homomorphisms.
Recall that a Lawvere-Tierney topology is an idempotent internal semilattice homomorphism on the subobject classifier $\Omega$. The internal parameterization of subtoposes states that these idempotent internal semilattice homomorphisms bijectively correspond to subtoposes. The method of our theory is similar. First, we define an internal semilattice $\Xi$ called \emph{a local state classifier}. Then, we establish a natural bijection between hyperconnected quotients and internal semilattice homomorphisms $\Xi\to \Omega$ from the local state classifier to the subobject classifier.

Lastly, one unexpected analogy is that a local state classifier and a subobject classifier, the central semilattices in each internal parameterization, are dual in a presheaf topos! Recall that the subobject classifier of a presheaf topos is the presheaf of all subobjects of representable presheaves (i.e., sieves). By contrast, the local state classifier of a presheaf topos is the presheaf of all co-subobjects of representable presheaves (Example \ref{LSCofPresheafTopos}).

\newtopic
Finally, we conclude this introduction by reiterating the appeal of a local state classifier.
The first thing to emphasize in the context of this paper is its theoretical necessity.
As mentioned above, a local state classifier plays a central role in our theory, like a subobject classifier in the case of subtoposes.

Despite its theoretical importance, the definition of a local state classifier (Definition \ref{DefLocalStateClassifier}) is unexpectedly simple: it is a colimit of all monomorphisms! At first glance, this definition might seem odd. In section \ref{SecLSC}, we explain as intuitively as possible how studying hyperconnected quotients leads us to this simple definition and discuss its existence and properties.

A local state classifier is often given by a familiar concept. 
For example, the local state classifier of the topos of directed graphs is the bouquet with $2$ edges
\[
\begin{tikzcd}
\bullet\ar[loop left]\ar[loop right,"."]
\end{tikzcd}
\]
That of a group action topos $\ps{G}$ is the set of all subgroups $\SubGrp{G}$ of $G$, equipped with the conjugate action (Example \ref{ExampleOfLSCOfGroupActionTopos}). That of the topos of sheaves over a topological space is the terminal sheaf (Example \ref{ExampleLSCofSheaves}). 
See subsection \ref{subsectionOfExampleOfLSC} for these and other examples. These phenomena enable us to connect the classifications of hyperconnected quotients and existing mathematical concepts (see section \ref{SecExampleApplication}). 

\subsection*{Overview}
In section \ref{SecTAP}, we briefly recall the notion of hyperconnected geometric morphisms and clarify some terminologies used in this paper.
In section \ref{SecLSC}, we explain some topics on a local state classifier. First, we leisurely introduce and define a local state classifier. After that, we give several examples and concretely construct a local state classifier of a Grothendieck topos. Last, we prove that a local state classifier of a cartesian closed category has an internal semilattice structure.
In section \ref{SecIPH}, we prove our main theorem, and in section \ref{SecExampleApplication}, we list examples and corollaries of it.
In section \ref{secConclusion}, we summarize what we did and list some possible future works.

In appendix \ref{AppendixInternalSemilattice}, we list the definitions and facts on internal semilattices and their filters used in our paper.
In appendix \ref{AppendixExistenceTheorem}, we give a sufficient condition for a category to have a local state classifier, which is not logically necessary for our main theorem.


\section{Terminology and preliminaries}\label{SecTAP}
\subsection{Hyperconnected geometric morphism}\label{SubsecHyperconnected}
In this subsection, we briefly recall the definition and some properties of a hyperconnected geometric morphism. 

For the following definitions and their equivalence, see \cite[][Proposition A4.6.6]{johnstone2002sketchesv1}.
\begin{definition}[Hyperconnected geometric morphism]\label{DefOfhyperconn}
\emph{A hyperconnected geometric morphism} from $\E$ to $\F$ is a geometric morphism $f\colon \E\to \F$ that satisfies the following equivalent conditions.
\begin{enumerate}
    \item $f^*$ is full and faithful, and its essential image is closed under subobjects in $\F$
    \item $f^*$ is full and faithful, and its essential image is closed under quotients in $\F$
    \item The unit and counit of $f^* \dashv f_*$ are both monic.
    \item $f_{\ast}$ preserves the subobject classifier $\Omega$. \label{hqomega}
\end{enumerate}
\end{definition}


A \emph{hyperconnected quotient} of a topos $\E$ is a hyperconnected geometric morphism from $\E$. We prefer the term hyperconnected \dq{quotient} rather than hyperconnected geometric morphism here to emphasize the aspect as a quotient topos, in the sense of Lawvere's open problem \cite{OpenLawvere}.

Strictly speaking, when we refer to hyperconnected quotients, we mainly refer to the equivalence classes of them based on the standard identification.
This is the same situation as the famous theorem of correspondence between subtoposes and Lawvere-Tierney topologies. The standard identification can be described in several ways. One is the $2$-categorical way. Two hyperconnected geometric morphisms $f\colon \E \to \F$ and $f'\colon \E \to \F'$ are identified when there exists an equivalence $e \colon \F \to \F'$ such that the following are commutative up to a natural isomorphism
\[
\begin{tikzcd}
&&\F\ar[dd,"e", ""'{name = A}]\\
\E\ar[rru,"f"]\ar[rrd,"{f'}"']\ar[phantom, to = A,"\cong", pos =0.65]&&\\
&&{{\F}'}.
\end{tikzcd}
\]
The other is a more elementary way of looking at the inverse image functor of a hyperconnected geometric morphism. By standard identification, we identifies two hyperconnected geometric morphisms if and only if the essential images of the two inverse image functors are the same. 



We give several examples of hyperconnected geometric morphisms.

\begin{example}[Full and bijective on objects functor]\label{HQfromfbo}
If a functor $F\colon \C \to \D$ between small categories is full and bijective on objects, then the induced geometric morphism $\ps{\C}\to \ps{\D}$ is hyperconnected. For detail, see \cite[][Example A4.6.9]{johnstone2002sketchesv1}.
\end{example}

\begin{example}[Topological monoid action topos]\label{HQfromTopMonoid}
Let $(M,\tau)$ be a topological monoid and $M$ be its underlying discrete monoid. Then, the topos of continuous action $\Cont{M}{\tau}$ is a hyperconnected quotient of the presheaf topos $\ps{M}$.
Properties of topological monoid action toposes, including this hyperconnected quotient, are extensively studied by Morgan Rogers in \cite{rogers2021toposesM}\cite{rogers2021toposesT}.

For the case where $M$ is a group, $\Cont{M}{\tau}$ is a topological group action topos, which is explained in \cite[][III.9. and VII.3.]{maclane2012sheaves} and mentioned in \cite[][A4.6]{johnstone2002sketchesv1}.
\end{example}

\begin{example}[Relativized two-valuedness]
A hyperconnected geometric morphism is a \dq{relativized two-valuedness.} In other words, the unique geometric morphism $\excl\colon \E \to \Set$ from a Grothendieck topos $\E$ is hyperconnected if and only if $\E$ is a \emph{two-valued topos}. It is followed by condition \ref{hqomega} in Definition \ref{DefOfhyperconn}. For example, the unique geometric morphism from a presheaf topos $\excl\colon\ps{\C} \to \Set$ is hyperconnected if and only if $\C$ is strongly connected, i.e., for any ordered pair of objects $(a,b) \in \ob{\C}^{2}$, there exists at least one morphism $a\to b$ in $\C$. For detail, see \cite[][Example A4.6.9]{johnstone2002sketchesv1}.

As an aside, a connected geometric morphism, which appears in Lawvere's original problem \cite{OpenLawvere}, gives a closely related example. A Grothendieck topos $\E$ is a \emph{connected topos} if and only if the unique geometric morphism $\excl\colon \E \to \Set$ is a \emph{connected geometric morphism}. See \cite[][Exercises 4.8.]{johnstone2014topos} for the precise statement, and see \cite[][Lemma C1.5.7.]{johnstone2002sketchesv2} for geometric intuition.
\end{example}

\begin{example}[Localic topos]\label{HQoflocalictopoi}
Another class of toposes, \emph{localic toposes} can also be characterized in terms of hyperconnected quotients. A Grothendieck topos $\E$ is localic if and only if $\E$ itself is the only hyperconnected quotient that $\E$ has. It is followed by the hyperconnected-localic factorization \cite{johnstone1981factorization}\cite[][A4.6]{johnstone2002sketchesv1}. Localic Grothendieck toposes give a theoretically important example of our theorem. See subsection \ref{subsectionLSCinLocalic}.
\end{example}


\begin{example}[Atomic quotients and well-founded part]\label{ExampleAtomicConnected} 
Atomic quotients (i.e., connected \emph{atomic} geometric morphisms) are examples of hyperconnected quotients \cite[][Lemma C3.5.4.]{johnstone2002sketchesv2}. 

In \cite[][section 8]{freyd1980axiom}, atomic quotients of a Grothendieck topos 
are called \emph{exponential varieties}. The minimal exponential variety is called \emph{well-founded part} and is of particular interest in relation to set theory.

Recently, atomic quotients have been studied from another point of view, the localic isotropy group\cite{henry2018localic}.
\end{example}

\section{Local state classifier}\label{SecLSC}
This section is dedicated to the explanations of several topics of a local state classifier $\Xi$ of a category, which plays a central role in this paper.

In the first two subsections, we leisurely introduce the notion of a local state classifier. In subsection \ref{subsectionNecessityandInevita}, we observe one property of hyperconnected quotients and explain how it leads us to the definition of a local state classifier. In subsection \ref{subsectionDefinitionIntuitionLSC}, we give a formal definition and informal explanation of a local state classifier.

In the following two subsections, we give examples of it. The purpose of section \ref{subsectionOfExampleOfLSC} is just to list examples of a local state classifier and familiarize readers with them. In subsection \ref{subsectionLSCofGrothendiecktopos}, we give a concrete construction of a local state classifier of an arbitrary Grothendieck topos. It is not just an example, but a theoretical step to ensure that all Grothendieck toposes are in the scope of our main theorem (Theorem \ref{mainTheorem}).

In the last subsection \ref{subsectionSemilatticeStructure}, we prove that a local state classifier of a cartesian closed category (for example, topos) has an internal semilattice structure. As we explained in Introduction (section \ref{introduction}), our internal parameterization of hyperconnected quotients takes advantage of this internal semilattice structure. 

\subsection{Necessity and inevitability}\label{subsectionNecessityandInevita}

The whole contents of this paper, including the definition of a local state classifier and the proof of the main theorem, are led by one simple idea: \dq{hyperconnected quotients are determined by local states.} Just to clarify the meaning of this idea, we formulate it as a lemma and give proof, although it is not logically necessary to prove the main theorem. Let $\E$ be a topos, and $\Q$ be its hyperconnected quotient. Only in this subsection, we say an object $X\in \ob{\E}$ is \emph{covered} by its family of subobjects $\{U_{\lambda} \subob X\}_{\lambda \in \Lambda}$, 
if the canonical morphism 
\[\coprod_{\lambda \in \Lambda}U_{\lambda} \to X\]
is epic.
Then the following holds. (Note that we can regard a hyperconnected quotient as a replete full subcategory of a topos.)
\begin{lemma}[Hyperconnected quotients are determined by local states.]
If an object $X$ is covered by $\{U_{\lambda} \subob X\}_{\lambda \in \Lambda}$, then $X$ belongs to $\Q$ if and only if all of $\{U_{\lambda}\}_{\lambda \in \Lambda}$ belong to $\Q$.
\end{lemma}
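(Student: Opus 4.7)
The plan is to exploit the closure properties of the essential image of $f^*$ for a hyperconnected geometric morphism $f\colon \E \to \F$, as recorded in Definition \ref{DefOfhyperconn}. Identifying $\Q$ with the essential image of $f^*$ inside $\E$, the relevant facts are: (i) $\Q$ is closed under subobjects in $\E$ (condition 1), (ii) $\Q$ is closed under quotients in $\E$ (condition 2), and (iii) $\Q$ is closed under small coproducts, since $f^*$ is a left adjoint and hence commutes with coproducts.

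For the forward direction, assume $X \in \Q$. Each $U_\lambda$ is a subobject of $X$ in $\E$, so by the closure of $\Q$ under subobjects (condition 1), each $U_\lambda \in \Q$. This step is immediate.

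For the backward direction, assume all $U_\lambda \in \Q$. Writing $U_\lambda \cong f^{\ast} V_\lambda$ for some $V_\lambda \in \F$, the coproduct $\coprod_\lambda U_\lambda \cong f^{\ast}\bigl(\coprod_\lambda V_\lambda\bigr)$ lies in $\Q$ because $f^{\ast}$ preserves colimits. The covering hypothesis then asserts that the canonical morphism
\[
\coprod_{\lambda\in\Lambda} U_\lambda \twoheadrightarrow X
\]
is epic, exhibiting $X$ as a quotient of an object of $\Q$. Applying closure under quotients (condition 2) yields $X \in \Q$.

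The argument is essentially a three-line assembly of the definitional closure properties, so there is no real obstacle. The only point that requires a moment of care is the handling of the coproduct: one must invoke that $f^{\ast}$ is a left adjoint (rather than only the closure properties 1 and 2) to know that $\coprod_\lambda U_\lambda$ lies in $\Q$ before applying closure under quotients.
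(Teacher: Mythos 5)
Your proof is correct and follows the same route as the paper, which simply cites closure of $\Q$ under subquotients and coproducts; you have merely spelled out the one-line argument, correctly noting that closure under coproducts comes from $f^*$ being a left adjoint.
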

\begin{proof}
This is immediately followed by the fact that hyperconnected quotient $\Q$ regarded as a replete full subcategory of $\E$ is closed under subquotients and coproducts in $\E$.
\end{proof}
In this sense, whether or not an object $X$ belongs to $\Q$ can be determined \emph{locally}. Although it is not clear what the word \dq{local} mathematically means here, this observation lead us to imagine the description of a hyperconnected quotient $\Q$ as a collection of all \dq{local states} that belong to $\Q$. For example, if one could define the set of all local states, it would be able to state that all hyperconnected quotients are constructed by \dq{good} subset of it. Rosenthal's quotient system \cite{rosenthal1982quotient} can be regarded as one (external) realization of this idea.



However, our focus is not just a description of hyperconnected quotients, but an internal parameterization of them. In this paper, instead of defining the \emph{set} of all local states and considering a good subset of it, we define a special \emph{object} $\Xi$ in the considered topos $\E$, which can be regarded as a collection of all local states, and consider a good subobject of it. This object $\Xi$ is the main content of this section, a local state classifier, and plays a central role throughout this paper. (Later, the \dq{good subobject} turns out to be its internal filter.)

\subsection{Definition and intuition}\label{subsectionDefinitionIntuitionLSC}

In subsection \ref{subsectionNecessityandInevita}, an object $\Xi$ that realizes the slogan \dq{a collection of all local states} is required. But how could we define such an object? To answer this seemingly too abstract question, we first extend the scope of our thinking from toposes to general categories and consider a simple formulation of what \dq{local} means (at least in this context), in elementary terms. 

Before explaining the informal idea of our formulation, we first define \emph{locally determined cocone} of a category. Let $\mo{\C}$ denote the subcategory of a category $\C$ that consists of all monomorphisms and the same objects of $\C$. 

\begin{definition}[Locally determined cocone]\label{DefOfLocallyDeterminedCocone}
Let $\C$ be a category. \emph{A locally determined cocone} of $\C$ is a cocone
under the inclusion functor $\mo{\C}\to \C$ regarded as a possibly large diagram.
\end{definition}

This definition needs more explanations since it might be rare for some readers to consider such a large cocone diagram. A locally determined cocone of a category $\C$ is an object $\Psi$ equipped with a family of morphisms $\{\Lmor_X \colon X\to \Lobj\}_{X \in \ob{\C}}$ from all objects of $\C$, such that for any monomorphism $\iota \colon U\subob X$, the following diagram
\[
\begin{tikzcd}[column sep =tiny]
U\ar[rr,"\iota",\mono]\ar[rd,"\Lmor_{U}"']&&X\ar[ld,"\Lmor_{X}"]\\
&\Lobj&\\
\end{tikzcd}
\]
commutes. Informally speaking, this commutativity asserts that the value of the morphism $\Lmor_X$ is locally determined, as the name asserts. We explain what this means, using a metaphor with elements (which does make sense by considering generalized elements) and Figure \ref{PictureLocallyDeterminedCocone}. Take an element $x$ in $X$ and try to compute $\Lmor_X (x)$. If we take a subobject $U$ small enough but containing $x$, then the commutativity implies that $\Lmor _X (x)$ is equal to $\Lmor_U (x)$. In other words, in a locally determined cocone, the value $\Lmor_X (x)$ can be computed in an arbitrarily small neighborhood of $x$. 
This is the intuition behind the terminology \dq{locally determined.} Therefore, a vague word \dq{local} basically means \dq{in an arbitrary subobject} in our context.
\begin{center}
\begin{figure}
    \centering
    \begin{tikzpicture}
    \draw [black,thick] (-2,3.3) circle (0.5);
    \draw (-2,3.3+0.5)circle(0) node[above]{$U$};
    \draw [black, dotted, thick] (1.5,3.3) circle (0.5);
    \filldraw [black] (-2,3.3) circle(0.04);
    \draw (-2,3.3)circle(0) node[above]{$x$};
    
    \draw [black, thick] (2,3) ellipse (2.5 and 1.5);
    \draw (2,3+1.5)circle(0) node[above]{$X$};
    \filldraw [black] (1.5,3.3) circle(0.04);
    \draw (1.5,3.3)circle(0) node[above]{$x$};

    \draw [black,thick,>->] (-1.5+0.1,3.3) -- (1-0.1,3.3);
    \draw (-0.8,3.3)circle(0) node[above]{$\iota$};
    \draw [black,thick,->] (-2,3.3-0.6) -- (-0.2,0.1-1);
    \draw (-1-0.5,1.2)circle(0) node[left]{$\Lmor_U$};
    \draw [black,thick,->] (2,3-1.6) -- (0.2,0.1-1);
    \draw (1+0.5,0.3)circle(0) node[right]{$\Lmor_X$};
    
    \draw [black,thick](-2,0-1.5) -- (2,0-1.5) -- (2,-1-1.5) -- (-2,-1-1.5) -- cycle;
    \draw (0,-2.5)circle(0) node[below]{$\Lobj$};
    \draw (0,-2)circle(0) node[]{$\Lmor_U (x) = \Lmor_X (x)$};

    \end{tikzpicture}
    \caption{Locally determined cocones}
    \label{PictureLocallyDeterminedCocone}
\end{figure}
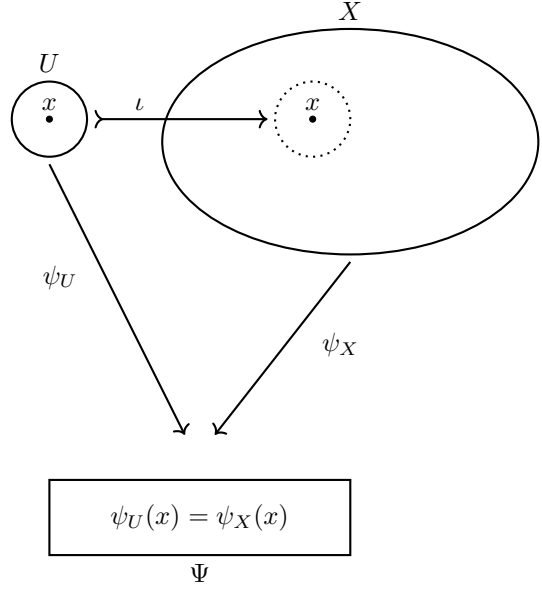
\end{center}

Finally, a local state classifier $\Xi$ is defined as the universal locally determined cocone, i.e., a colimit of the diagram $\mo{\C}\to \C$.
\begin{definition}[Local state classifier]\label{DefLocalStateClassifier}
\emph{A local state classifier} $\Xi$ of a category $\C$ is a colimit of the inclusion functor $\mo{\C}\to \C$, if it exists.
The associated cocone is referred to as $\{\xi_{X}\colon X\to \Xi\}_{X\in \ob{\C}}$.
\end{definition}

\begin{example}[Toy example: $\Set$]\label{ExampleLSCofSets}
Just to confirm the reader's formal understanding of the definition, one fundamental example, the category of sets $\Set$, is given in advance of the next section. 
Since $\mo{\Set}$ is the category of sets and injections $\Set_{\text{inj}}$, we think of a cocone and a colimit for the inclusion functor $\Set_{\text{inj}}\to \Set$. For any set $L$ and a picked element $l\in L$, the family of functions 
\[\{\Lmor_X \colon X \to L\}_{X\text{: set}},\]
 where each function $\Lmor_X$ sends every element of $X$ to $l$, is a locally determined cocone.
Furthermore, every locally determined cocone of the category of sets $\Set$ is in this form. (One can prove this fact by defining $l\in L$ to be $l \defeq \Lmor_{\{\ast\}}(\ast)$, for a given locally determined cocone $\{\Lmor_X\}$) Therefore, a local state classifier of the category of sets is given by the terminal cocone.
\[\{! \colon X \to \{\ast\}\}_{X\text{: set}} .\]
\end{example}

As we write \dq{if it exists} in the above definition, a local state classifier may not exist. We make a brief remark on that, although we discuss it later.

\begin{remark}[Existence of a local state classifier]
Since a local state classifier is defined as a colimit of a possibly large diagram, a local state classifier may not exist, even for a cocomplete category. However, for a Grothendieck topos, we prove the existence of it by concretely constructing it in subsection \ref{subsectionLSCofGrothendiecktopos}. A more generalized existence theorem is shown in Appendix \ref{AppendixExistenceTheorem}. For an elementary topos without a local state classifier, see Example \ref{exampleElementaryToposWithoutLSC}.
\end{remark}

By definition, all locally determined cocones uniquely factor through a local state classifier $\Xi$. In this sense, $\Xi$ and a component of the cocone $\xi_X\colon X\to \Xi$ are expected to be a collection of all local states and a morphism classifying \dq{elements} depending on their local states, as we desired.  In the next section, we check the validity of this expectation through several examples.

\subsection{Examples}\label{subsectionOfExampleOfLSC}
In the previous subsection, we define a local state classifier led by an intuition of what \dq{local} means in our context. 
In this subsection, we give several examples of local state classifiers to confirm that they do fit our intuition.
Leaving rigorous calculations and proofs for subsection \ref{subsectionLSCofGrothendiecktopos},
we concentrate on presentations and informal descriptions of them.


\begin{example}[Directed graphs]\label{ExampleLSCofDirectedGraph}
The first example is from graph theory. Let $\DirGraph$ denote the category of all directed graphs, in other words, the presheaf topos of the parallel morphism category
\[
\begin{tikzcd}
\Par\colon & V\ar[r,bend right]\ar[r,bend left]&E.
\end{tikzcd}
\]

What is a local state classifier of this category $\DirGraph \simeq \ps{\Par}$?
First, we informally consider a local state of a vertex, although it is not mathematically defined. Recall that the informal idea of a local state is data that does not change within any subobject. For example, the indegree and outdegree of a vertex cannot be a local state in our context, because they change when considered within a subobject. In fact, \dq{being a vertex} is the only local state of a vertex, because, for each vertex of a directed graph, the vertex $\bullet$ itself is a subobject. We cannot locally distinguish two vertices. On the other hand, the situation is different in the case of an edge. For example, consider the following directed graph.
\[
\begin{tikzcd}
\bullet\ar[r,"a"]&\bullet\ar[r,"b"]&\bullet\ar[loop above,"c"]
\end{tikzcd}
\]
The following three subobjects are the minimum subobjects that include $a,b,c$, respectively. 
\[
\begin{tikzcd}
\bullet\ar[r,"a"]&\bullet&\bullet\ar[r,"b"]&\bullet&\bullet\ar[loop above,"c"]
\end{tikzcd}
\]
 By this observation, one might imagine that two possible local states of an edge are $\bL$ and $\bN$.

It turns out later that the local state classifier $\Xi$, calculated by Example \ref{LSCofPresheafTopos}, fits this informal observation. The local state classifier $\Xi$ has one vertex (say, $\bV$) and two edges $\bL,\bN$, as shown in the following picture.
\[
\begin{tikzcd}
\bullet\ar[loop left,"\bL"]\ar[loop right,"\bN"]
\end{tikzcd}
\]
The morphism $\xi_{X}\colon X\to \Xi$ for each $X \in \ob{\DirGraph}$ sends its vertex to the unique vertex of $\Xi$ and its edge $e$ to $\bL$ or $\bN$ depending on whether $e$ is a loop or not. For example, the edges $a,b$ in the following graph are sent to $\bN$, and $c$ is sent to $\bL$.
\[
\begin{tikzcd}
\bullet\ar[r,"a"]&\bullet\ar[r,"b"]&\bullet\ar[loop above,"c"]
\end{tikzcd}
\]
The local state classifier $\Xi$ does classify local states! 
\end{example}

\begin{example}[Group actions] \label{ExampleOfLSCOfGroupActionTopos}
The second example is from group theory. Consider a presheaf topos $\ps{G}$ on a group $G$, i.e., the category of right $G$-sets. What is a local state of an element of a right $G$-set? To consider this question, take a right $G$-set $X$ and its element $x\in X$. One might think an appropriate notion of a local state of $x$ is the orbit $Gx$ of $x$ because the orbit is the minimum subobject of $X$ that includes $x$. It is correct, but there is a simpler description of it, using stabilizer subgroups $\Stab_{G}(x)$. (Considering orbits is the reverse side of considering stabilizer subgroups since the orbit $Gx$ and the right $G$-set of the right cosets of the stabilizer subgroup $\Stab_{G}(x)$ are isomorphic
\[
Gx \cong \Stab_{G}(x)\backslash G
\] 
as right $G$-sets.)
The local state classifier of $\ps{G}$ is a set $\Xi = \SubGrp{G}$ of all subgroups of $G$ equipped with the right conjugate action
\[
H\ast g \coloneqq g^{-1}Hg.
\]
For each right $G$-set $X$, the morphism $\xi_{X}\colon X\to \Xi$ sends an element $x\in X$ to its stabilizer subgroup \[\xi_{X} (x) = \Stab_{G}(x).\]
\end{example}


\begin{example}[Sheaves over a topological space]\label{ExampleLSCofSheaves}
The third example is the category of sheaves on a topological space. A local state classifier of a sheaf topos $\Sh{X}$ over a topological space $X$ is understood intuitively and, at the same time, theoretically suggestive. 

Recall that $\Sh{X}$ is equivalent to the category of \'{e}tale bundles over $X$, i.e., local(!) homeomorphisms to $X$. For any point $y \in Y$ in an \'{e}tale bundle $p\colon Y\to X$, a small enough neighborhood of $y$ is homeomorphic to that of the underlying point $p(y) \in X$. Therefore, there seems no possible local state of a point $y \in Y$, except where it locates, i.e., the underlying point $p(y)\in X$.

As this observation suggests, the local state classifier of $\Sh{X}$, calculated by Example \ref{LSCofLocalicGrothendieckTopos}, is $X$ itself. In other words, it is the terminal \'{e}tale bundle $\id{X}\colon X\to X$.
This example includes some toy examples. First, a local state classifier of the function topos $\Set^{\to}$, which is equivalent to the sheaf topos over the Sierpi\'{n}ski space, is the terminal object. Second, a local state classifier of the topos of $3$-colored sets $\Set/\{R, G, B\}$, which is just a slice category of $\Set$ or the sheaf topos over the discrete topological space $\{R, G, B\}$, is the terminal object $\{R, G, B\}$. Unsurprisingly, the \dq{local states} of $3$-colored sets are just colors.

Later, we generalize this example to an arbitrary localic Grothendieck topos. In Example \ref{LSCofLocalicGrothendieckTopos}, we show a generalized statement that a local state classifier of a localic Grothendieck topos is the terminal object. Furthermore, in subsection \ref{subsectionLSCinLocalic}, we prove that a Grothendieck topos is localic if and only if its local state classifier is terminal, as a corollary of our main theorem.
\end{example}

\begin{example}[Relation to a terminal object]\label{ExampleTerminalLSC}
Related to the previous example, we list several categories whose local state classifiers are terminal. Before we get into the details, notice that the definition of a local state classifier is similar to the characterization of a terminal object, which is the colimit of the identity functor $\id{\C}\colon \C\to \C$ \cite[see][Lemma 3.7.1]{riehl2017category}. If a category $\C$ has a terminal object $\1$, then $\C$ has a trivial locally determined cocone $\{\excl\colon X\to \1\}$. It is natural to ask whether this trivial cocone gives a local state classifier.

For the following categories, a local state classifier is a terminal object. 
\begin{itemize}
    \item $\Set$, the category of sets (Example \ref{ExampleLSCofSets})
    \item $\FinSet$, the category of finite sets
    \item $\mathrm{Poset}$, the category of partially ordered sets
    \item $\mathrm{Top}$, the category of topological spaces
    \item $\mathrm{Manifold}$, the category of manifolds
    \item $\mathrm{Sh}(X)$, the category of sheaves over a topological space $X$ (Example \ref{ExampleLSCofSheaves}) 
    \item $\mathrm{Top}/X$, the category of bundles over a topological space $X$
\end{itemize}
Those examples are immediately obtained by the general fact that if a category is cartesian and has a generating set consisting of subterminals, then its local state classifier is a terminal object. In fact, any locally determined cocone $\{\Lmor_{X} \colon X\to\Lobj\}$ is uniquely factored through $\{\excl\colon X\to \1\}$ as 
\[
\begin{tikzcd}[column sep=tiny ]
&X\ar[ld,"\excl"']\ar[rd,"\Lmor_{X}"]&\\
\1\ar[rr,"\Lmor_{\1}"']&&\Lobj.
\end{tikzcd}
\]
It is because for any subterminal object $S$ and morphism $S\subob X$,
\[
\begin{tikzcd}[column sep=tiny ]
&S\ar[d,\mono]\ar[ldd,bend right,"\excl"',\mono] \ar[rdd, bend left, "\Lmor_{S}"]&\\
&X\ar[ld,"\excl"']\ar[rd,"\Lmor_{X}"]&\\
\1\ar[rr,"\Lmor_{\1}"']&&\Lobj
\end{tikzcd}
\]
the outer triangle of the above diagram is commutative. For a relationship between this fact and localic toposes, see Example \ref{LSCofLocalicGrothendieckTopos} and subsection \ref{subsectionLSCinLocalic}.
\end{example}


\begin{example}[Pointed sets]\label{ExampleLSCofPointedSets}
Some categories that are not topos also have a non-trivial (i.e., not terminal) local state classifier. One toy example is given by the category of pointed sets $\Set_{\ast}$. An object is a set with a basepoint, and a morphism between them is a function that sends a basepoint to a basepoint. The local state classifier $\Xi$ of this category $\Set_{\ast}$ is the set 
\[
\{\text{[not being a basepoint], [being a basepoint]}\}
\]
with a basepoint $\text{[being a basepoint]}$. For each pointed set $(X,x_0)$, the morphism $\xi_{(X,x_0)}\colon (X,x_0)\to \Xi$ sends the basepoint $x_0$ to $\text{[being a basepoint]}$ and others to $\text{[not being a basepoint]}$.
\end{example}


Although we see later that every Grothendieck topos has a local state classifier in subsection \ref{subsectionLSCofGrothendiecktopos}, an elementary topos may or may not have a local state classifier. We give an example for each case.

\begin{example}[Combinatorial species]\label{exampleSpecies}
We give an impressive example of a local state classifier of an elementary topos from categorical combinatorics. In \cite[][1.2. Cat\'{e}gorie des esp\`{e}ces]{joyal1981theorie}, the category of species is defined as a functor category $\Species$, where $\FinSet_0$ denotes the groupoid of finite sets and bijections. This category is an elementary topos because it is equivalent to the product category of finite group action toposes
\[\Species \simeq \prod_{n=0}^{\infty} \FinSet^{\symG{n}},\]
and a product category of elementary toposes is also an elementary topos \cite[][V. Exercises 8]{maclane2012sheaves}.

A local state classifier of the category of species is a functor (or a species) $\Xi\colon \FinSet_0 \to \FinSet$ that sends a finite set $A$ to the set of all subgroups of the permutation group on $A$
\[\Xi (A) = \spxi{A}.\]
An action of a bijection $\sigma\colon A \to B$ on an element $S \in \Xi (B)$ is given by the left conjugate action $S \mapsto \sigma S \sigma^{-1}$.
For a species $M\colon \FinSet_0 \to \FinSet$, a component of cocone $(\xi_M)_A \colon M(A) \to \Xi (A)$ sends an element $s \in M(A)$ (which is called $M$-structure $s$ on $A$) to its \emph{group of symmetries} 
\[
(\xi_M)_A (s) = \{\sigma \colon A \to A \text{: bijection}\mid \sigma s =s\},
\]
where $\sigma s$ denotes the action of $\sigma$ on $s$.
For example, consider a species of undirected graphs $G\colon \FinSet_0 \to \FinSet$, which sends a finite set $A$ to the set of all undirected graphs whose underlying sets are $A$. Then, the following graphs 
\[
\begin{tikzpicture}[scale = 0.7]
\draw[gray, thick] (0-1,0) -- (2-1,0) -- (3.73-1,1) -- (3.73-1,-1) -- (2-1,0);
\draw[gray, thick] (1*1.4+5+2+1,0) -- (0.31*1.4+5+2+1,0.95*1.4) -- (-0.81*1.4+5+2+1, 0.59*1.4) -- (-0.81*1.4+5+2+1, -0.59*1.4) -- (0.31*1.4+5+2+1,-0.95*1.4) -- (1*1.4+5+2+1,0);
\filldraw[black] (0-1,0) circle (2pt) node[above]{1};
\filldraw[black] (2-1,0) circle (2pt) node[above]{2};
\filldraw[black] (3.73-1,1) circle (2pt) node[above]{3};
\filldraw[black] (3.73-1,-1) circle (2pt) node[below]{4};

\filldraw[black] (4.8,0) circle (0pt) node{\text{and}};

\filldraw[black] (1*1.4+5+2+1,0) circle (2pt) node[right]{3};
\filldraw[black] (0.31*1.4+5+2+1,0.95*1.4) circle (2pt) node[above]{2};
\filldraw[black] (-0.81*1.4+5+2+1, 0.59*1.4) circle (2pt) node[left]{1};
\filldraw[black] (-0.81*1.4+5+2+1, -0.59*1.4) circle (2pt) node[left]{5};
\filldraw[black] (0.31*1.4+5+2+1,-0.95*1.4) circle (2pt) node[below]{4};
\end{tikzpicture}
\]
are sent by $\xi_G$ to their automorphism groups, $\Z/2\Z \subset \Aut{\{1,2,3,4\}}$ and $D_5 \subset \Aut{\{1,2,3,4,5\}}$ respectively. In this sense, a local state classifier $\Xi$ of the category of species is \dq{the species of symmetries,} which consists of all finite groups due to Cayley's theorem!
\end{example}

\begin{example}[Elementary topos without a local state classifier]\label{exampleElementaryToposWithoutLSC}
There is an elementary topos that does not have a local state classifier. Let $G$ be a group with infinitely many finite index subgroups, such as $\Z$, and $\FinSet^{G^{\op}}$ be the topos of finite right $G$-sets. Then $\FinSet^{G^{\op}}$ does not have a local state classifier. (Compare this counterexample with Example \ref{ExampleOfLSCOfGroupActionTopos})
\end{example}

\subsection{Local state classifier of a Grothendieck topos}\label{subsectionLSCofGrothendiecktopos}
In this section, we concretely construct a local state classifier of a Grothendieck topos. Recall that even the existence of it is non-trivial, even though a Grothendieck topos is cocomplete.

We make use of the adjunction between the inclusion functor and the sheafification functor
\[
\begin{tikzcd}
\Sh{\C,J}\ar[r, hookrightarrow]&\ps{\C}\ar[l,"\sha"',bend right],
\end{tikzcd}
\]
for our construction of a local state classifier of a Grothendieck topos $\Sh{\C,J}$.
In this section, to avoid confusion about whether we are considering a diagram in the sheaf topos $\Sh{\C,J}$ or the presheaf topos $\ps{\C}$, we explicitly write next to a diagram which topos we are considering at that time. Especially, while dealing with epimorphisms, one should be more careful at this point, because an epimorphism in the sheaf topos is not necessarily epic in the presheaf topos.

Since a Grothendieck topos $\Sh{\C,J}$ is a reflective full subcategory of a presheaf topos $\ps{\C}$, once we have a colimit $\Xi_0$ of the following large diagram
\[
\begin{tikzcd}
\mo{\Sh{\C,J}}\ar[r,\mono]&\Sh{\C,J}\ar[r,hookrightarrow]&\ps{\C},
\end{tikzcd}
\]
we obtain a local state classifier of a considered Grothendieck topos $\Sh{\C,J}$, just by sheafifying it. See \cite[][Proposition 4.5.15.]{riehl2017category} for the construction of colimits in a reflective subcategory.

To explicitly describe $\Xi_0$, we first make a fundamental observation of \dq{local states} of a sheaf. Let $X$ be a $J$-sheaf over a small site $(\C,J)$. For each element $x\in Xc$ for $c \in \C$, there is the unique map $\ceil{x}\colon \ay{c}\to X$ that correspond to $x \in Xc$ by the Yoneda lemma for sheaves \cite[][III.6.(17)]{maclane2012sheaves}.
Throughout this section, we call this morphism $\ceil{x}$ and its epi-part and mono-part of the epi-mono factorization in $\Sh{\C,J}$ are denoted by $q_x, \iota_x$ respectively
\[
\begin{tikzcd}
\ay{c}\ar[rd,\epi,"q_x"']\ar[rr,"\ceil{x}"]&&X&\\
& \biangle{x}\ar[ru,\mono,"\iota_x"']&&\text{in } \Sh{\C,J}.
\end{tikzcd}
\]

The reason we focus on the morphism $\ceil{x}$ and its decomposition $\iota_x \circ q_x$ is the fact that $\iota_x \colon \biangle{x}\subob X$ gives the smallest sub-$J$-sheaf containing $x$. 
\begin{lemma}(Minimum subsheaf that contains a chosen element)\label{LemmaGenerateSubsheaf}
For a small site $(\C,J)$, a $J$-sheaf $X$ and  an element $x\in Xc$, a subobject $\iota_x \colon \biangle{x} \subob X$ is the minimum sub-$J$-sheaf of $X$ that contains $x$.
\end{lemma}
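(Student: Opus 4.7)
The plan is to verify two things: first, that the mono $\iota_x\colon \biangle{x}\subob X$ does have $x$ in its image, so $\biangle{x}$ genuinely ``contains'' $x$; and second, that $\biangle{x}$ is dominated, as a subobject of $X$, by any other sub-$J$-sheaf of $X$ that contains $x$.

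For the containment, I would invoke the Yoneda lemma for sheaves to observe that $\ceil{x}\colon \ay{c}\to X$ is characterized by sending the canonical generic element of $\ay{c}(c)$ (the image of $\id{c}\in\y{c}(c)$ under the sheafification unit) to $x\in X(c)$. Writing $\ceil{x}=\iota_x\circ q_x$ and using that $\iota_x$ is componentwise injective (since monomorphisms in $\Sh{\C,J}$ remain monomorphisms in $\ps{\C}$, hence componentwise injective), one concludes that $x$ lies in the image of $(\iota_x)_c$. For the minimality, suppose $\iota\colon Y\subob X$ is a sub-$J$-sheaf with $\iota_c(y)=x$ for some $y\in Y(c)$. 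Yoneda for sheaves provides a canonical map $\ceil{y}\colon \ay{c}\to Y$, and naturality of the correspondence yields $\iota\circ\ceil{y}=\ceil{x}$. Factoring $\ceil{y}=\iota'\circ q'$ as epi followed by mono inside $\Sh{\C,J}$ then gives a second epi-mono factorization $\ceil{x}=(\iota\circ\iota')\circ q'$ of $\ceil{x}$, since a composite of monomorphisms is a monomorphism. Uniqueness (up to isomorphism) of epi-mono factorizations in the topos $\Sh{\C,J}$ produces an isomorphism identifying $\biangle{x}$ with the image of $\ceil{y}$ and showing that $\iota_x$ factors through $\iota\circ\iota'$, hence through $\iota$; equivalently, $\biangle{x}\subob Y$ as subobjects of $X$, as desired.

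The only delicate point is the naturality of the Yoneda correspondence for sheaves, namely that for any morphism $f\colon Z\to Z'$ of $J$-sheaves and any $z\in Z(c)$ one has $\ceil{f_c(z)}=f\circ\ceil{z}$; once this is in hand, the rest is a clean appeal to the standard fact that epi-mono factorizations in $\Sh{\C,J}$ exist and are unique up to canonical isomorphism. No real obstacle is expected, but one must remember throughout that these factorizations are computed in the sheaf topos rather than in the presheaf topos, where the epimorphism $q_x$ or $q'$ may fail to be a presheaf epimorphism.
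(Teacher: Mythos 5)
Your proposal is correct and follows essentially the same route as the paper: the paper observes that a sub-$J$-sheaf $\iota\colon S\subob X$ contains $x$ exactly when $\ceil{x}$ lifts along $\iota$, and then invokes the universal property of the image of $\ceil{x}$, which is precisely what your uniqueness-of-epi-mono-factorization argument establishes. Your write-up simply makes explicit the Yoneda naturality and the componentwise-injectivity checks that the paper leaves implicit.
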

\begin{proof}
A sub-$J$-sheaf $\iota\colon S\subob X$ contains $x \in X$ if and only if the morphism $\ceil{x}\colon \ay{c}\to X$ lifts along $\iota\colon S\subob X$
\[
\begin{tikzcd}
&S\ar[d,"\iota",\mono]&\\
\ay{c}\ar[r,"\ceil{x}"]\ar[ur,bend left, \dash]&X&\text{in } \Sh{\C,J}.
\end{tikzcd}
\]
This lemma is immediately implied by this fact and the universal property of the image.
\end{proof}

Recall the explanation of a locally determined cocone given in subsection \ref{subsectionDefinitionIntuitionLSC}, in the context of a Grothendieck topos $\Sh{\C,J}$.
For any locally determined cocone $\{\Lmor_X\colon X \to \Lobj\}$, we can easily prove that $\Lmor_X (x)$ is equal to  $\Lmor_{\langle x\rangle}(x)$ by diagram chasing
\[
\begin{tikzcd}[column sep = tiny]
&\ay{c}\ar[ld,\epi,"q_x"']\ar[rd,"\ceil{x}"]& \\
\biangle{x}\ar[rd,"\Lmor_{\biangle{x}}"']\ar[rr,\mono,"\iota_x"]&&X\ar[ld,"\Lmor_{X}"]& \\
&\Lobj&& \text{in } \Sh{\C,J}.
\end{tikzcd}
\]
In other words, a locally determined cocone is determined only by the value on the co-subobject of a sheafification of a representable functors $q_x\colon \ay{c}\quo \biangle{x}$. 

These observations lead us to define an approximation $\Xi_0$ of a local state classifier as a collection of all co-subobjects of $\ay{c}$ for each object $c \in \C$. Although a co-subobject is an equivalence class of epimorphisms from $\ay{c}$, by abuse of language, we just write $E$ for a co-subobject (an equivalence class) that $\ay{c}\quo E$ belongs to.
\begin{definition}[Presheaf $\Xi_0$]
For a small site $(\C,J)$, we define a presheaf $\Xi_0$ over $\C$ as follows:
For each object $c \in \C$, a set $\Xi_0 (c)$ is defined as a set of all co-subobjects of $\ay{c}$. For each morphism $f:c\to c'$ in $\C$, we define a function $\Xi_0(f)\colon \Xi_0(c') \to \Xi_0 (c)$ as a function that sends an element $q:\ay{c'}\quo E$ of $\Xi_0 (c')$ to the epi part of epi-mono factorization (in $\Sh{\C,J}$) of $q \circ \ay{f}$
\[
\begin{tikzcd}
\ay{c}\ar[r,"\ay{f}"]\ar[d,\epi]&\ay{c'}\ar[d,"q",\epi]&\\
\Xi_0 (f)(E)\ar[r,\mono]&E& \text{in } \Sh{\C,J}.
\end{tikzcd}
\]
\end{definition}
Since the number of co-subobjects of $\ay{c}$ is small, it defines a presheaf over $\C$. The functoriality of $\Xi_0 \colon \C^{\op} \to \Set$ is also easily verified by the uniqueness of the epi-mono factorization.


To state that $\Xi_0$ is a colimit of the diagram
\[
\begin{tikzcd}
\mo{\Sh{\C,J}}\ar[r,\mono]&\Sh{\C,J}\ar[r,hookrightarrow]&\ps{\C},
\end{tikzcd}
\]
we define a component of a cocone $\{\orb_X\colon X \to \Xi_0\}_{X \in \ob{\Sh{\C,J}}}$. (The symbol $\orb$ is not important and is just taken from the first few letters of \dq{orbit,} inspired by the case of the group action topos.) We use the epi-mono factorization of $\ceil{x}\colon \ay{c}\to X$, as we have mentioned above.
\begin{definition}[Morphism $\orb_X$]
For a $J$-sheaf $X$, we define a morphism $\orb_X \colon X \to \Xi_0$ as follows:
for $x\in \Xi_0 (c)$, $(\orb_X)_c (x)$ is the co-subobject $q_x\colon \ay{c}\quo \biangle{x}$.
\end{definition}

Now we should check that $\orb_X$ defined above is a morphism of presheaves, i.e., the following diagram commutes for an arbitrary $f:c\to c'$ in $\C$:
\[
\begin{tikzcd}
Xc\ar[d,"(\orb_X)_c"]&Xc'\ar[l,"Xf"']\ar[d,"(\orb_X)_{c'}"]&\\
\Xi_0 c & \Xi_0 c'\ar[l,"\Xi_0 f"']& \text{(in } \Set \text{).}
&
\end{tikzcd}
\]
For each $x \in Xc'$, the following commutative diagram verifies it
\[
\begin{tikzcd}[column sep = tiny]
\ay{c}\ar[rr,"\ay{f}"]\ar[d,"q_{x\cdot f}",\epi]\ar[rdd,bend right=80,"\ceil{x\cdot f}"']&&\ay{c'}\ar[d,"q_x",\epi]\ar[ldd,bend left=80,"\ceil{x}"]&\\
\biangle{x\cdot f}\ar[rr,\dash, \mono,"l"]\ar[rd,\mono,"\iota_{x\cdot f}"]&&\biangle{x}\ar[ld,\mono,"\iota_x"]&\\
&X&& \text{in } \Sh{\C,J},
\end{tikzcd}
\]
where $x\cdot f$ denotes $(X(f))(x)$. The existence of the dashed monomorphism $l\colon \biangle{x\cdot f}\subob \biangle{x}$ is implied by the universal property of the image $\iota_{x\cdot f}\colon\biangle{x\cdot f}\subob X$ or equivalently by Lemma \ref{LemmaGenerateSubsheaf}.

By all the above preparations, we are now able to state the central lemma in this subsection.
\begin{lemma}[Universal property of $\Xiz$]\label{LemmaUniversalityOfXiz}
For a small site $(\C,J)$, a family of morphisms $\{\orb_X\colon X \to \Xiz\}_{X \in \ob{\Sh{\C,J}}}$ is a colimit cocone of the following large diagram:
\[
\begin{tikzcd}
\mo{\Sh{\C,J}}\ar[r,\mono]&\Sh{\C,J}\ar[r,hookrightarrow]&\ps{\C}.
\end{tikzcd}
\]
\end{lemma}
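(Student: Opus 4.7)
The plan is to verify the two halves of being a colimit cocone: that $\{\orb_X\}$ really is a cocone under the diagram $\mo{\Sh{\C,J}}\to \ps{\C}$, and that it is universal among such cocones. Throughout, I will make essential use of Lemma \ref{LemmaGenerateSubsheaf} and of the sheaf-theoretic Yoneda correspondence between elements $e\in E(c)$ and morphisms $\ay{c}\to E$ in $\Sh{\C,J}$.

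First I would verify the cocone condition. Given a monomorphism $\iota\colon U\subob X$ in $\Sh{\C,J}$ and an element $u\in U(c)$, I must check $(\orb_X)_c(\iota_c(u))=(\orb_U)_c(u)$. The key point is that, under $\iota$, the minimum subsheaf $\biangle{u}\subob U$ containing $u$ is also the minimum subsheaf of $X$ containing $\iota_c(u)$: any subsheaf of $X$ containing $\iota_c(u)$ pulls back along $\iota$ to a subsheaf of $U$ containing $u$, which by Lemma \ref{LemmaGenerateSubsheaf} must contain $\biangle{u}$. Hence the epi parts of the factorizations of $\ceil{u}$ and of $\ceil{\iota_c(u)}$ are canonically isomorphic as co-subobjects of $\ay{c}$, which is exactly what we need.

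Next, for universality, suppose $\{\Lmor_X\colon X\to \Lobj\}_{X\in\ob{\Sh{\C,J}}}$ is a cocone in $\ps{\C}$. Define $\bar{\Lmor}\colon \Xi_0\to \Lobj$ componentwise: for $[q\colon \ay{c}\quo E]\in \Xi_0(c)$, let $e\in E(c)$ be the element corresponding to $q$ under sheaf Yoneda, and set $\bar{\Lmor}_c([q])\defeq (\Lmor_E)_c(e)$. Well-definedness follows because if $q'\colon \ay{c}\quo E'$ represents the same co-subobject via an isomorphism $\alpha\colon E\xrightarrow{\sim} E'$ with $\alpha\circ q=q'$, then $\alpha$ is in particular a monomorphism, so the cocone condition gives $\Lmor_{E'}\circ\alpha=\Lmor_E$, and $\alpha_c(e)=e'$. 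For naturality in $c$, given $f\colon c\to c'$ and $[q'\colon \ay{c'}\quo E']\in \Xi_0(c')$, form the epi-mono factorization of $q'\circ \ay{f}$ as $\ell\circ q''$ with $q''\colon \ay{c}\quo E''$ and $\ell\colon E''\subob E'$; let $e''\in E''(c)$ and $e'\in E'(c')$ be the canonical elements. A direct unwinding of the sheaf Yoneda correspondence shows $\ell_c(e'')=E'(f)(e')$, and the cocone condition applied to $\ell$ then yields $(\Lmor_{E''})_c(e'')=(\Lmor_{E'})_c(E'(f)(e'))$, which is precisely $\bar{\Lmor}_c\circ \Xi_0(f)=\Lobj(f)\circ\bar{\Lmor}_{c'}$ on $[q']$.

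Finally I would establish the factorization identity and uniqueness simultaneously. For any sheaf $X$ and $x\in X(c)$, the element $(\orb_X)_c(x)$ is represented by $q_x\colon \ay{c}\quo \biangle{x}$, and the element of $\biangle{x}(c)$ corresponding to $q_x$ under Yoneda is (by construction of $q_x$ as the epi part of $\ceil{x}$) the element $\tilde{x}$ with $(\iota_x)_c(\tilde{x})=x$. So $\bar{\Lmor}_c((\orb_X)_c(x))=(\Lmor_{\biangle{x}})_c(\tilde{x})=(\Lmor_X)_c(x)$ by the cocone condition applied to $\iota_x$. For uniqueness, observe that every co-subobject $[q\colon \ay{c}\quo E]\in \Xi_0(c)$ can itself be written as $(\orb_E)_c(e)$ where $e$ is the canonical element, since the epi-mono factorization of the already-epic morphism $q=\ceil{e}$ has identity as its mono part; hence any factoring morphism must take the value $(\Lmor_E)_c(e)$ there. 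The main obstacle is the naturality verification in Step for $\bar{\Lmor}$, since it requires carefully tracking how sheaf-Yoneda elements transform under the epi-mono factorization in $\Sh{\C,J}$ of a composite into a potentially different sheaf; but once the identity $\ell_c(e'')=E'(f)(e')$ is established, everything else is routine diagram chasing.
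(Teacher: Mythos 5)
Your proposal is correct and follows essentially the same route as the paper's proof: verify the cocone condition via the uniqueness/minimality of the epi--mono factorization of $\ceil{x}$ (Lemma \ref{LemmaGenerateSubsheaf}), define the factoring map on $[q\colon \ay{c}\quo E]$ as the Yoneda element of $\Rmor_E\circ q\circ\eta_{\y{c}}$, check naturality using that the comparison map $\Xi_0(f)(E)\subob E$ is a monomorphism between sheaves, and derive uniqueness from the fact that $q$ itself is $(\orb_E)_c$ of its canonical element. The only (harmless) difference is that you explicitly verify well-definedness on equivalence classes of epimorphisms, which the paper handles by its stated abuse of language.
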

\begin{proof}

Before proving the universal property, we should prove that the family of morphisms $\{\orb_X\colon X \to \Xiz\}_{X \in \ob{\Sh{\C,J}}}$ is a cocone, in other words, the following diagram
\[
\begin{tikzcd}[column sep = tiny]
X\ar[rr,"m",\mono]\ar[rd,"\orb_X"']&&Y\ar[ld,"\orb_Y"]&\\
&\Xiz&& \text{in } \ps{\C},
\end{tikzcd}
\]
commutes for each monomorphism $m\colon X\to Y$ in $\Sh{\C,J}$. For $c \in \ob{\C}$ and $x\in Xc$, the equation $(\orb_X)_c (x) = (\orb_Y)_c(m(x))$ is followed by the uniqueness of the epi-mono factorization and the following commutative diagram
\[
\begin{tikzcd}[column sep = tiny]
&\ay{c}\ar[ld,"q_x",\epi]\ar[rd,"q_{m(x)}",\epi]\ar[ldd,bend right =80,"\ceil{x}"']\ar[rdd,bend left =80,"\ceil{m(x)}"]&&\\
\biangle{x}\ar[d,"\iota_x", \mono]&&\biangle{m(x)}\ar[d,"\iota_{m(x)}"', \mono]&\\
X\ar[rr,\mono ,"m"]&&Y& \text{in } \Sh{\C,J}.
\end{tikzcd}
\]

Then we prove the universality of this cocone $\{\orb_X\colon X \to \Xiz\}_{X \in \ob{\Sh{\C,J}}}$. Take an arbitrary cocone $\{\Rmor_X\colon X \to \Robj\}_{X \in \ob{\Sh{\C,J}}}$. (Unlike locally determined cocones, $\Robj$ is not necessarily a $J$-sheaf, but is just an object of $\ps{\C}$.) We prove the existence and the uniqueness of a cocone map from $\Xiz$ to $\Robj$.

First, we prove the uniqueness of a cocone map $\Xiz \to \Robj$. Let $l\colon \Xiz \to \Robj$ be a cocone map. For any element $q\colon \ay{c} \quo E$ of $\Xiz c$, we have the following commutative diagram:
\[
\begin{tikzcd}[row sep = tiny]
&&&\Xiz \ar[dd,"l"]&\\
\y{c}\ar[r,"\eta_{\y{c}}"] & \ay{c} \ar[r,"q"] & E \ar[ru,"\orb_X"] \ar[rd,"\Rmor_E"'] & & \\
&&&\Robj&\text{in } \ps{\C},
\end{tikzcd}
\]
since $E$ is a $J$-sheaf. (As we mentioned before, an epimorphism $q$ in $\Sh{\C,J}$ is not necessarily epic in the presheaf topos $\ps{\C}$.) Here, $\eta_{\y{c}}\colon \y{c} \to \ay{c}$ denotes the unit of the adjunction of sheafification. By the definition of $\orb_X$, the Yoneda-corresponding element for the upper half
\[
\begin{tikzcd}[row sep = tiny]
&&&\Xiz &\\
\y{c}\ar[r,"\eta_{\y{c}}"] & \ay{c} \ar[r,"q"] & E \ar[ru,"\orb_X"] & &\text{in } \ps{\C}
\end{tikzcd}
\]
is $q\colon \ay{c} \quo E$, which is an element of $\Xiz c$. Therefore, $l(E) \in \Robj c$ must be the Yoneda-corresponding element for the lower half:
\[
\begin{tikzcd}[row sep = tiny]
\y{c}\ar[r,"\eta_{\y{c}}"] & \ay{c} \ar[r,"q"] & E  \ar[rd,"\Rmor_E"'] & & \\
&&&\Robj&\text{in } \ps{\C}.
\end{tikzcd}
\]
Hence the uniqueness of a cocone map $l\colon \Xiz \to \Robj$ is proved.

Next, we prove the existence of a cocone map. As the proof of the uniqueness suggests, we define $l_{c}\colon \Xiz c \to \Robj c$ as a function that sends $q\colon \ay{c} \quo E$ to the corresponding element of 
\[
\begin{tikzcd}[row sep = tiny]
\y{c}\ar[r,"\eta_{\y{c}}"] & \ay{c} \ar[r,"q"] & E  \ar[r,"\Rmor_E"] & \Robj&\text{in } \ps{\C},
\end{tikzcd}
\]
by the Yoneda lemma. There remain two things to prove. One is that $l$ is a morphism of presheaf $\Xiz \to \Robj$. The other is that $l$ is a morphism of cocone.

We prove that $l$ is a morphism of presheaf $\Xiz \to \Robj$. 
For an arbitrary morphism $f\colon c \to c'$, we should prove the commutativity of the following diagram
\[
\begin{tikzcd}
\Xiz c \ar[d,"l_{c}"]& \Xiz c' \ar[d,"l_{c'}"]\ar[l,"\Xiz f"']&\\
\Robj c&\Robj c'\ar[l,"\Robj f"']& \text{(in } \Set \text{).}
\end{tikzcd}
\]
Take an arbitrary element $q\colon \ay{c'} \quo E$ of $\Xiz c'$. By definition of a presheaf $\Xiz$, we have the next diagram
\[
\begin{tikzcd}
\ay{c}\ar[r,"\ay{f}"]\ar[d,\epi]&\ay{c'}\ar[d,"q",\epi]&\\
\Xi_0 (f)(E)\ar[r,\mono,"i"]&E& \text{in } \Sh{\C,J}.
\end{tikzcd}
\]
Since $i$ is a monomorphism between $J$-sheaves (and the unit natural transformation is natural), the following diagram is commutative
\[
\begin{tikzcd}[column sep = tiny]
\y{c}\ar[rr,"\y{f}"]\ar[d,"\eta_{\y{c}}"]&&\y{c'}\ar[d,"\eta_{\y{c'}}"]&\\
\ay{c}\ar[rr,"\ay{f}"]\ar[d]&&\ay{c'}\ar[d,"q"]&\\
\Xi_0 (f)(E)\ar[rr,\mono,"i"]\ar[rd,"\Rmor_{\Xi_0 (f)(E)}"']&&E\ar[ld,"\Rmor_{E}"]&\\
&\Robj&&\text{in } \ps{\C}.
\end{tikzcd}
\]
By the correspondence of the Yoneda lemma, we have the desired equation.

Lastly, we prove that $l$ is a cocone map. For an arbitrary $J$-sheaf $X$, we prove the commutativity of 
\[
\begin{tikzcd}[column sep =tiny]
&X\ar[ld,"\orb_X"']\ar[rd,"\Rmor_X"]&&\\
\Xiz\ar[rr,"l"]&&\Robj& \text{in } \ps{\C}.
\end{tikzcd}
\]
Take an arbitrary element $x \in Xc$. The corresponding morphism $\y{c} \to X$ for $x \in Xc$ is the composite of the following three morphisms
\[
\begin{tikzcd}
\y{c} \ar[r,"\eta_{\y{c}}"]&\ay{c} \ar[r,"q_x"]&\biangle{x}\ar[r,"\iota_x",\mono]&X &\text{in } \ps{\C}.
\end{tikzcd}
\]
All paths in the following diagram 
\[
\begin{tikzcd}[row sep =tiny]
&&&&\Xiz\ar[dd,"l"]&\\
\y{c}\ar[r,"\eta_{\y{c}}"] & \ay{c} \ar[r,"q_x"] & \biangle{x}\ar[r,"\iota_x",\mono]\ar[rru,bend left,"\orb_{\biangle{x}}"]\ar[rrd,bend right,"\Rmor_{\biangle{x}}"'] & X\ar[ru,"\orb_X"]\ar[rd,"\Rmor_X"'] &&\\
&&&&\Robj&\text{in } \ps{\C}
\end{tikzcd}
\]
from $\y{c}$ to $\Robj$ defines the same morphism, because $\iota_x$ is a monomorphism between $J$-sheaves. The definition of $l$ is also used here. Again by the Yoneda lemma, we have $l_c ((\orb_X)_c (x))=(\Rmor_X)_c (x)$. Thus proof is completed.
\end{proof}

As discussed at the beginning of this section, now we obtain the local state classifier of a Grothendieck topos $\Sh{\C,J}$ by sheafification.
\begin{proposition}[Local state classifier of a Grothendieck topos]
The sheafification $\sha \Xiz$ of $\Xiz$ is a local state classifier of a Grothendieck topos $\Sh{\C,J}$. Each component of the colimit cocone is given by 
\[
\begin{tikzcd}
X\ar[r,"\orb_X"]&\Xiz\ar[r,"\eta_{\Xiz}"]&\sha \Xiz.
\end{tikzcd}
\]
\end{proposition}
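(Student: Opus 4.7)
The plan is to reduce this proposition to Lemma \ref{LemmaUniversalityOfXiz} by invoking the standard fact that colimits in a reflective subcategory are computed by applying the reflector to colimits in the ambient category. Since $\sha \colon \ps{\C} \to \Sh{\C,J}$ is left adjoint to the fully faithful inclusion, it preserves colimits, and any colimit of a diagram valued in $\Sh{\C,J}$ (viewed through the inclusion as a diagram in $\ps{\C}$) is obtained by sheafifying its presheaf-level colimit.

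Concretely, I would argue as follows. The diagram whose colimit we want in $\Sh{\C,J}$ is $\mo{\Sh{\C,J}} \to \Sh{\C,J}$. Composing with the inclusion into $\ps{\C}$, Lemma \ref{LemmaUniversalityOfXiz} identifies a colimiting cocone $\{\orb_X \colon X \to \Xiz\}$ in $\ps{\C}$. Given an arbitrary cocone $\{\Rmor_X \colon X \to \Robj\}_{X \in \ob{\Sh{\C,J}}}$ in $\Sh{\C,J}$, its image under the inclusion is in particular a cocone in $\ps{\C}$, and therefore factors uniquely through $\Xiz$ via some $l \colon \Xiz \to \Robj$ in $\ps{\C}$. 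Since $\Robj$ is a $J$-sheaf, the adjunction $\sha \dashv (\hookrightarrow)$ produces a unique morphism $\tilde l \colon \sha \Xiz \to \Robj$ in $\Sh{\C,J}$ satisfying $\tilde l \circ \eta_{\Xiz} = l$.

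It remains to identify the cocone component out of $X$ with the composite $\eta_{\Xiz} \circ \orb_X$. For this, note that naturality of the unit gives
\[
\begin{tikzcd}
X \ar[r,"\orb_X"] \ar[d,"\eta_X"'] & \Xiz \ar[d,"\eta_{\Xiz}"] \\
\sha X \ar[r,"\sha\orb_X"'] & \sha \Xiz,
\end{tikzcd}
\]
and since $X$ is already a $J$-sheaf the unit $\eta_X$ is an isomorphism. Under the canonical identification $X \cong \sha X$ the reflected morphism $\sha \orb_X$ becomes exactly $\eta_{\Xiz} \circ \orb_X$, which is therefore the required cocone component in $\Sh{\C,J}$. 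Combining this with the universal property established in the previous paragraph shows that $\{\eta_{\Xiz} \circ \orb_X \colon X \to \sha \Xiz\}_{X \in \ob{\Sh{\C,J}}}$ is a colimit cocone of $\mo{\Sh{\C,J}} \to \Sh{\C,J}$.

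I do not expect any real obstacle here: the content is essentially a direct application of Lemma \ref{LemmaUniversalityOfXiz} together with the reflective-subcategory principle cited above. The only point requiring mild care is the correct identification of the cocone components, which is handled by the naturality square of $\eta$ and the fact that $\eta$ is invertible on sheaves; everything else is formal bookkeeping through the sheafification adjunction.
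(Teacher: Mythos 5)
Your proposal is correct and takes essentially the same route as the paper: the paper's proof consists of citing the general construction of colimits in a reflective subcategory (Riehl, Proposition 4.5.15) applied to Lemma \ref{LemmaUniversalityOfXiz}, which is exactly the reflective-subcategory principle you unfold. Your explicit verification of the factorization through $\eta_{\Xiz}$ and the identification of the cocone components is just the spelled-out version of that citation.
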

\begin{proof}
This is immediately followed by the general construction of colimits in a reflective subcategory. See \cite[][Proposition 4.5.15.]{riehl2017category}.
\end{proof}

We end this subsection by mentioning a few special classes of Grothendieck toposes for which this general construction takes a simpler form.
\begin{example}[Local state classifier of a presheaf topos]\label{LSCofPresheafTopos}
As a particular case, a local state classifier of a presheaf topos is worth mentioning. It is not only because the sheafification functor $\sha$ becomes trivial and the construction becomes simpler but also because we can observe a striking similarity with the construction of the subobject classifier!

For a presheaf topos $\ps{\C}$, $\Xi c$ is a set of all co-subobjects of a representable presheaf $\y{c}$. This is similar to the construction of the subobject classifier $\Omega$ since $\Omega c$ is the set of all subobjects of a representable presheaf $\y{c}$. 

As we mentioned in Introduction (Section \ref{introduction}), our model case for an internal parameterization is that of a subtopos, which takes advantage of the subobject classifier $\Omega$. It is unsurprising that there are theoretical similarities between the roles of $\Omega$ and $\Xi$ in each internal parameterization. However, it is remarkable that there is such an unexpected similarity at the level of construction.

Note that two examples of local state classifiers, Example \ref{ExampleLSCofDirectedGraph} and Example \ref{ExampleOfLSCOfGroupActionTopos} in subsection \ref{subsectionOfExampleOfLSC}, are constructed in this way. (Note that the local state classifier in Example \ref{exampleSpecies} is also constructed in the essentially same way.)
\end{example}

\begin{example}[Local state classifier of a localic Grothendieck topos]\label{LSCofLocalicGrothendieckTopos}
Now we can prove that a local state classifier of a localic Grothendieck topos is the terminal object. (For a simpler proof for a stronger statement, \dq{a Grothendieck topos is localic if and only if its local state classifier is the terminal object}, see subsection \ref{subsectionLSCinLocalic}.)

For a localic Grothendieck topos $\E$, we can take a small site $(P,J)$ where $P$ is a poset and $\E \simeq \Sh{P,J}$ (\cite[][IX.5. Theorem 1 (ii)]{maclane2012sheaves}). Since $\ay{p}$ for $p \in P$ is a subterminal object, $\Xiz p$ is a singleton, i.e., the presheaf $\Xiz$ is a terminal object. Because the sheafification functor $\sha$ is left exact, we conclude the local state classifier $\sha \Xiz$ is terminal.
\end{example}

\begin{remark}[Sheafification functor may not preserve a local state classifier]\label{RemarkSheafificarionandLSC}
We used sheafification several times in the construction of the local state classifier of a Grothendieck topos. One might wish that the sheafification functor would preserve a local state classifier, and the local state classifier of a Grothendieck topos would be simply given by a sheafification of that of a presheaf topos, but such a construction does not work. A sheafification functor may not preserve a local state classifier, even though it preserves arbitrary colimits and monomorphisms. 

For example, consider the functor $\ceil{E}\colon \1 \to \Par$ from the terminal category $\1$ to the parallel morphisms category $\Par$ (see Example \ref{ExampleLSCofDirectedGraph}), that sends the unique object of $\1$ to $E$. Since this functor is fully faithful, its associated geometric morphism
\[\Set \cong \ps{\1}\to \ps{\Par}\cong \DirGraph,\]
is a geometric embedding (see \cite[][section VII.4.]{maclane2012sheaves}). Its associated sheafification functor $\sha\colon \DirGraph \to \Set$ is just a pre-composite of the functor $\ceil{E}$ and sends a directed graph $X$ to the set of all edges of $X$. Since the local state classifier $\Xi$ of the category of directed graphs has two edges (see Example \ref{ExampleLSCofDirectedGraph}) and the local state classifier of the category of sets is a singleton (see Example \ref{ExampleLSCofSets}), the sheafification functor $\sha$ does not preserve a local state classifier.
\end{remark}

\subsection{Internal semilattice structure}\label{subsectionSemilatticeStructure}
In this subsection, we show that a local state classifier of a cartesian closed category has a natural internal semilattice structure. All definitions, terminology, and facts on internal semilattices used in this subsection are explained in Appendix \ref{AppendixInternalSemilattice}.

In the case of a presheaf topos, the semilattice structure is apparent. Recall that the local state classifier $\Xi$ of a presheaf topos $\ps{\C}$ consists of all co-subobjects of representable presheaves (see Example \ref{LSCofPresheafTopos}). Like the order structure on $\Omega c$ for the subobject classifier $\Omega$ of a presheaf topos, a canonical order structure on $\Xi c$ for each $c \in \ob{\C}$ is given by the usual order between co-subobjects (see Example \ref{semilatticestrinPresheaf} for details). Those semilattice structures on each $\Xi c$ give an internal semilattice structure on $\Xi$. We generalize this semilattice structure to a local state classifier of a cartesian closed category.

But why do we consider internal semilattice structures? There are several possible answers from different perspectives. One answer is that it is technically necessary to realize internal parameterization. Because a hyperconnected quotient is closed under several operations (see Definition \ref{DefOfhyperconn}), its internal parameterization must somehow reflect those operations. In the internal parameterization, the internal semilattice structure of a local state classifier corresponds to finite products (see Lemma \ref{LemmaequiHom}). Recalling that the definition of a local state classifier was inspired only by the operation of taking subobjects (and implicitly coproducts), it is essential to consider an additional structure on it (in this case, semilattice structure) to capture the notion of a hyperconnected quotient.

Another answer is that our model case, the internal parameterization of subtoposes, also takes advantage of the internal semilattice structure on the subobject classifier. One might think that the usual algebraic structure on the subobject classifier $\Omega$ is not just a semilattice but a Heyting algebra structure. It is correct, but when we consider Lawvere-Tierney topologies, only a semilattice structure is used. In fact, a Lawvere-Tierney topology is an idempotent internal semilattice homomorphism on the subobject classifier, not necessarily a Heyting algebra homomorphism. Therefore, from the viewpoint of imitating the model case, it is natural to consider semilattice structure.



At first, we realize $\Xi^n$ as a colimit so that we can define meet $\meet\colon \Xi^{2} \to \Xi$ and top $\top\colon \Xi^{0} \to \Xi$.

\begin{lemma}[Universal property of $\Xi^n$]\label{UniXiN}
Let $\C$ be a cartesian closed category with a local state classifier $\{\xi_{X}\colon X\to \Xi\}$.
For any non-negative integer $n\geq 0$, 
\[
\{\xi_{X_1}\times \dots \times \xi_{X_n} \colon X_{1} \times \dots \times X_{n} \to \Xi^n\}
\]
defines the colimit cocone of the functor
\[
\begin{tikzcd}
{\mo{\C}}^{n}\ar[r,\mono]&\C^{n}\ar[r,"\prod_{k=1}^{n}"]&\C.
\end{tikzcd}
\]
\end{lemma}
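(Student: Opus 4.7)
The plan is to induct on $n$. The base case $n = 0$ reduces to observing that the diagram $\mo{\C}^0 \to \C$ is constant at the terminal object $\1 = \Xi^0$, whose colimit is itself with trivial cocone; the case $n = 1$ is exactly Definition \ref{DefLocalStateClassifier}.

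For the inductive step, suppose the statement holds for $n$; we prove it for $n+1$. The key tool is the cartesian closed structure: for every object $Y$, the functor $(-) \times Y \colon \C \to \C$ admits a right adjoint $(-)^Y$ and therefore preserves every colimit that exists. Identifying $\mo{\C}^{n+1} \cong \mo{\C}^n \times \mo{\C}$, I would evaluate the colimit of $(X_1, \ldots, X_{n+1}) \mapsto X_1 \times \dots \times X_{n+1}$ as an iterated colimit (Fubini for colimits). For each fixed $X_{n+1}$, the inductive hypothesis combined with the fact that $(-) \times X_{n+1}$ preserves colimits yields
\[
\colim_{(X_1, \ldots, X_n) \in \mo{\C}^n} X_1 \times \dots \times X_n \times X_{n+1} \;\cong\; \Xi^n \times X_{n+1},
\]
whose cocone component at $(X_1, \ldots, X_n)$ is $(\xi_{X_1} \times \dots \times \xi_{X_n}) \times \id{X_{n+1}}$. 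Subsequently, using the $n = 1$ case and the fact that $\Xi^n \times (-)$ preserves colimits,
\[
\colim_{X_{n+1} \in \mo{\C}} \Xi^n \times X_{n+1} \;\cong\; \Xi^n \times \Xi \;=\; \Xi^{n+1},
\]
whose cocone component at $X_{n+1}$ is $\id{\Xi^n} \times \xi_{X_{n+1}}$.

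Composing the two cocone components gives
\[
(\id{\Xi^n} \times \xi_{X_{n+1}}) \circ \bigl((\xi_{X_1} \times \dots \times \xi_{X_n}) \times \id{X_{n+1}}\bigr) \;=\; \xi_{X_1} \times \dots \times \xi_{X_{n+1}},
\]
matching the cocone claimed in the lemma, which closes the induction. The one delicate point is invoking Fubini for the possibly large indexing category $\mo{\C}$; the main obstacle to worry about is therefore not the categorical content but making sure this step is justified. Because all intermediate colimits have been exhibited explicitly ($\Xi^n$ by the inductive hypothesis and $\Xi^n \times X_{n+1}$ by preservation along a left adjoint), the iterated-colimit identification is purely formal from the universal property of colimits and does not require any smallness assumption on the indexing diagram. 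An alternative, should one prefer to avoid Fubini entirely, is to verify the universal property directly by repeatedly currying an arbitrary cocone $\{\Lmor_{X_1, \ldots, X_{n+1}}\}$ into $\Lobj^{X_{n+1}}$, applying the inductive hypothesis inside, and then invoking the $n=1$ case in the remaining variable.
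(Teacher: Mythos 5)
Your proof is correct and follows essentially the same route as the paper: induction on $n$, using that $(-)\times Y$ is a left adjoint in a cartesian closed category (hence preserves all existing colimits) together with a Fubini-type interchange of (possibly large) colimits, and then checking that the composite cocone components are $\xi_{X_1}\times\dots\times\xi_{X_{n+1}}$. Your remark that the interchange step needs no smallness hypothesis, only the existence of the intermediate colimits, is accurate and addresses the one point the paper leaves implicit.
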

\begin{proof}
Since $\C$ is cartesian closed, for any object $X\in \ob{\C}$, the functor 
\[
X\times -\colon \C \to \C
\]
is a left adjoint functor. Therefore, $X\times -$ preserves all (not necessarily small) colimits.

By the induction for $n$, we have
\begin{align*}
    \Xi^{n+1} &\cong \Xi^{n} \times \Xi\\
    &\cong (\colim_{\mo{\C}^n} \ X_{1} \times \dots \times X_{n}) \times \Xi\\
    &\cong \colim_{\mo{\C}^n} (X_{1} \times \dots \times X_{n} \times \Xi)\\
    &\cong \colim_{\mo{\C}^n} (X_{1} \times \dots \times X_{n} \times \colim_{\mo{\C}}X)\\
    &\cong \colim_{\mo{\C}^n} (\colim_{\mo{\C}} X_{1} \times \dots \times X_{n} \times X)\\
    &\cong \colim_{\mo{\C}^{n+1}} (X_{1} \times \dots \times X_{n} \times X).
\end{align*}
(The base case $n=0$ is a trivial case.) One can easily check that the associated colimit cocone is
\[
\{\xi_{X_1}\times \dots \times \xi_{X_n} \colon X_{1} \times \dots \times X_{n} \to \Xi^n\}.
\]
\end{proof}

Using this universal property, we can define the canonical meet map
\[
\meet_n\colon \Xi^n \to \Xi
\]
as the canonical cocone map from the colimit cocone 
\[
\{\xi_{X_1}\times \dots \times \xi_{X_n} \colon X_{1} \times \dots \times X_{n} \to \Xi^n\}
\]
to the cocone
\[
\{\xi_{(X_1 \times \dots \times X_n)} \colon X_{1} \times \dots \times X_{n} \to \Xi\}.
\]
It is easily verified that the latter is actually a cocone, since $\{\xi_{X}\}$ is locally determined, and a product of monomorphisms is a monomorphism.

In other words, $\meet_n$ is the unique map that makes the following diagram commutative, for any $n$-tuple of objects $(X_1 ,\dots,X_n )$
\[
\begin{tikzcd}[row sep = huge, column sep =tiny]
&X_1 \times \dots \times X_n \ar[ld,"\xi_{X_1}\times \dots \times \xi_{X_n}"']\ar[rd,"\xi_{( X_1 \times \dots \times X_n)}"]&\\
\Xi^n \ar[rr,"\meet_n"]&&\Xi.
\end{tikzcd}
\]
In particular, $\meet_0$ is the unique map that makes 
\[
\begin{tikzcd}[row sep = huge, column sep =tiny]
&\1 \ar[ld,"\id{\1}"']\ar[rd,"\xi_{\1}"]&\\
\1 \ar[rr,"\meet_0"]&&\Xi
\end{tikzcd}
\]
commutative, which is $\xi_{\1}$. 

As we see below, in the proof that the above operations give an internal semilattice structure on $\Xi$, each axiom of semilattice comes from a corresponding natural transformation related to the cartesian structure, as follows:
\begin{align*}
    \top \meet x = x&\leftrightsquigarrow \lambda_{X} \colon \1 \times X \cong X\\
    x \meet \top = x&\leftrightsquigarrow \rho_{X}    \colon X \times \1 \cong X\\
    x\meet (y\meet z)=(x\meet y)\meet z&\leftrightsquigarrow \alpha_{X,Y,Z}\colon X\times (Y\times Z) \cong (X\times Y)\times Z \\
    x\meet y = y \meet x&\leftrightsquigarrow \gamma_{X,Y}\colon X\times Y \cong Y\times X\\
    x = x\meet x&\leftrightsquigarrow \Delta_{X}\colon X \subob X\times X.
\end{align*}
It is theoretically important that all the above natural transformations are monic.

\begin{proposition}[Semilattice structure of $\Xi$]\label{semilatticeonXi}
Let $\C$ be a cartesian closed category with a local state classifier $\{\xi_{X}\colon X\to \Xi\}$. 
Then, the local state classifier $\Xi$ equipped with a top map
\[
\top \coloneqq \meet_0 \colon \1\to \Xi
\]
and a meet map
\[
\meet \coloneqq\meet_2 \colon \Xi\times \Xi \to \Xi
\]
is an internal semilattice.
\end{proposition}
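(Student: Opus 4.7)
The strategy is to verify each of the five semilattice identities listed just before the statement by exploiting the universal property of $\Xi^n$ as a colimit given by Lemma \ref{UniXiN}: two parallel morphisms $\Xi^n \to \Xi$ coincide as soon as their precompositions with the colimit cocone $\xi_{X_1}\times\cdots\times\xi_{X_n}$ agree for every tuple of objects $(X_1,\dots,X_n)$. The cocone property of $\{\xi_X\}$ then reduces each check to a short diagram chase involving one of the canonical natural transformations $\lambda_X$, $\rho_X$, $\alpha_{X,Y,Z}$, $\gamma_{X,Y}$, or $\Delta_X$ attached to the cartesian structure.

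The crucial input is that each of these five natural transformations is a monomorphism, so each can be fed into the cocone identity $\xi_Y \circ m = \xi_X$ that characterizes a locally determined cocone. For $\lambda_X$, $\rho_X$, $\alpha_{X,Y,Z}$, $\gamma_{X,Y}$ this is automatic since they are isomorphisms, and the diagonal $\Delta_X\colon X \to X\times X$ is split monic via either projection. In each application I also use the explicit identifications $\top = \xi_{\1}$ (read off from the defining commutative triangle for $\meet_0$) and, more substantially, the defining equation $\meet_n \circ (\xi_{X_1}\times\cdots\times\xi_{X_n}) = \xi_{X_1\times\cdots\times X_n}$ for $n=2,3$.

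As a representative computation, for the left unit law I need $\meet \circ (\top \times \id{\Xi}) \circ \lambda_{\Xi}^{-1} = \id{\Xi}$. Precomposing with an arbitrary $\xi_X$ and using naturality of $\lambda$ rewrites the left-hand side as $\meet \circ (\xi_{\1}\times \xi_X)\circ \lambda_X^{-1}$; the defining property of $\meet_2$ collapses this to $\xi_{\1\times X}\circ \lambda_X^{-1}$, and the cocone property applied to the monomorphism $\lambda_X^{-1}\colon X \to \1\times X$ yields $\xi_X$, which is exactly $\id{\Xi}\circ \xi_X$. The right unit law is symmetric, using $\rho_X$. Associativity is checked after precomposing with $\xi_{X_1}\times\xi_{X_2}\times\xi_{X_3}$: both sides collapse, by two iterated uses of the defining property of $\meet_2$, to $\xi_{X_1\times X_2\times X_3}$ up to precomposition with $\alpha_{X_1,X_2,X_3}$, which is monic. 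Commutativity and idempotency follow by the same template with $\gamma_{X_1,X_2}$ and $\Delta_X$ respectively; in the latter case, precomposing $\meet \circ \langle \id{\Xi},\id{\Xi}\rangle$ with $\xi_X$ rewrites to $\meet \circ (\xi_X\times \xi_X)\circ \Delta_X = \xi_{X\times X}\circ \Delta_X = \xi_X$.

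I do not anticipate any genuine obstacle: the proof is essentially a bookkeeping exercise, tracking factorizations through $\meet_n$ and the structural isomorphisms. The only point that deserves attention is the monicity of $\Delta_X$, without which idempotency would fail, and this is immediate from the retraction by either projection.
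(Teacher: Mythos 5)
Your proposal is correct and follows essentially the same route as the paper: reduce each identity to a check against the colimit cocone via Lemma \ref{UniXiN}, then combine naturality of the structural transformation, the defining equation $\meet_n\circ(\xi_{X_1}\times\cdots\times\xi_{X_n})=\xi_{X_1\times\cdots\times X_n}$, and the locally determined property applied to the monomorphisms $\lambda,\rho,\alpha,\gamma,\Delta$. The points you single out as crucial (monicity of the five transformations, especially of $\Delta_X$, and the identification $\top=\xi_{\1}$) are exactly the ones the paper relies on.
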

\begin{proof}
We show that the triple $(\Xi,\top,\meet)$ defined above satisfies the four axioms of an internal semilattice in Definition \ref{DefOfBSL}. 

First, we prove the first axiom $\top\meet x = x$, which asserts that 
\[
\begin{tikzcd}
\Xi \ar[rrr,"\id{\Xi}"',bend right]\ar[r,"\lambda_{\Xi}"]&\1 \times \Xi\ar[r,"\top\times \id{\Xi}"]&\Xi \times \Xi\ar[r,"\meet"]&\Xi
\end{tikzcd}
\]commutes, where $\lambda_{A}$ denotes the canonical natural isomorphism $\lambda_{A}\colon \1 \times A \to A$. It is enough to prove that for any $X\in \ob{\C}$, the pre-composite of $\xi_X$
\[
\begin{tikzcd}
X\ar[d,"\xi_X"']&&\\
\Xi \ar[rrr,"\id{\Xi}"',bend right]\ar[r,"\lambda_{\Xi}"]&\1 \times \Xi\ar[r,"\top\times \id{\Xi}"]&\Xi \times \Xi\ar[r,"\meet"]&\Xi
\end{tikzcd}
\] define the same morphism from $X$ to $\Xi$, because $\{\xi_{X}\colon X\to \Xi\}$ is a colimit cocone.
The desired commutativity is implied by the next diagram:
\[
\begin{tikzcd}[row sep=large]
X\ar[d,"\xi_X"']\ar[r,"\lambda_X"]&\1\times X \ar[d,"\id{\1}\times \xi_{X}"']\ar[rd,"\xi_{\1} \times \xi_{X}"] \ar[rrd,bend left,"\xi_{\1 \times X}"]&\\
\Xi \ar[rrr,"\id{\Xi}"',bend right]\ar[r,"\lambda_{\Xi}"]&\1 \times \Xi\ar[r,"\top\times \id{\Xi}"]&\Xi \times \Xi\ar[r,"\meet"]&\Xi.
\end{tikzcd}
\]
In fact, the left square, the middle triangle and the right triangle are commutative, because of the naturality of $\lambda$, the equality $\top = \meet_0 = \xi_{\1}$ and the definition of $\meet = \meet_2$ respectively, and the outer bent trapezoid is also commutative because $\lambda_X$ is monic and $\xi$ is locally determined. The other equation $x \meet \top = x$ in the first axiom is similarly proved, using the canonical natural isomorphism $\rho_{A}\colon \1 \times A \to A$ instead of $\lambda$.

Second, we prove the second axiom $(x\meet y)\meet z=x\meet (y\meet z)$. By a similar argument using Lemma \ref{UniXiN}, it is enough to prove that two possible composed morphisms from $X\times(Y\times Z)$ to $\Xi$ in 
\[
\begin{tikzcd}[row sep =huge, column sep =normal]
X\times (Y \times Z)\ar[rd,"\xi_{X} \times (\xi_{Y} \times \xi_{Z})"]&&(\Xi \times \Xi)\times \Xi\ar[r,"\meet \times \id{\Xi}"]&\Xi \times \Xi\ar[dd,"\meet"]
\\
&\Xi \times (\Xi \times \Xi)\ar[ru,"\alpha_{\Xi,\Xi,\Xi}"]\ar[d,"\id{\Xi}\times \meet"]&&
\\
&\Xi\times \Xi\ar[rr,"\meet"]&&\Xi
\end{tikzcd}
\]
coincide, where $\alpha_{A,B,C}\colon A\times (B\times C)\to (A\times B)\times C$ denotes the canonical isomorphism. By the following diagram
\[
\begin{tikzcd}[row sep =huge, column sep =normal]
&(X\times Y)\times Z\ar[rd,"(\xi_{X} \times \xi_{Y})\times \xi_{Z}"']\ar[rrd,"\xi_{X\times Y}\times \xi_{Z}"]\ar[rrddd,controls={+(3,1.5) and +(4,3)},"\xi_{(X\times Y) \times Z}"]&&
\\
X\times (Y \times Z)\ar[rd,"\xi_{X} \times (\xi_{Y} \times \xi_{Z})"]\ar[ru,"\alpha_{X,Y,Z}"]\ar[rdd,"\xi_{X}\times \xi_{Y\times Z}"']\ar[rrrdd,controls={+(-1.5,-4) and +(-3,-3)},"\xi_{X\times (Y \times Z)}"']&&(\Xi \times \Xi)\times \Xi\ar[r,"\meet \times \id{\Xi}"']&\Xi \times \Xi\ar[dd,"\meet"]
\\
&\Xi \times (\Xi \times \Xi)\ar[ru,"\alpha_{\Xi,\Xi,\Xi}"]\ar[d,"\id{\Xi}\times \meet"]&&
\\
&\Xi\times \Xi\ar[rr,"\meet"]&&\Xi,
\end{tikzcd}
\]
the second axiom is proved, because of the naturality of $\alpha_{X,Y,Z}$, the definition of $\meet$, and the fact that $\xi$ is locally determined and $\alpha$ is monic.

To prove the third axiom $x\meet y = y \meet x$, it is enough to prove that 
\[
\begin{tikzcd}[row sep = tiny]
X\times Y\ar[r,"\xi_{X} \times \xi_{Y}"]&\Xi \times \Xi\ar[rd,"\meet"]\ar[dd,"\gamma_{\Xi,\Xi}"]&\\
&&\Xi\\
&\Xi \times \Xi\ar[ru,"\meet"']&\
\end{tikzcd}
\]
defines the same morphism, where $\gamma_{A,B}\colon A\times B \cong B\times A$ denotes the canonical isomorphism given by the cartesian symmetric monoidal structure of $\C$ (namely, $\gamma_{A,B} = \gen{\mathrm{pr}_2,\mathrm{pr}_1}$). Checking the commutativity of the following diagram
\[
\begin{tikzcd}[row sep = tiny]
X\times Y\ar[r,"\xi_{X} \times \xi_{Y}"]\ar[dd,"\gamma_{X,Y}"]\ar[rrd,bend left=50,"\xi_{X\times Y}"]&\Xi \times \Xi,\ar[rd,"\meet"]\ar[dd,"\gamma_{\Xi,\Xi}"]&\\
&&\Xi\\
Y\times X\ar[r,"\xi_{Y} \times \xi_{X}"]\ar[rru,bend right=50,"\xi_{Y\times X}"']&\Xi \times \Xi\ar[ru,"\meet"']&\
\end{tikzcd}
\]
using the definition of $\meet$ and the fact that $\gamma$ is monic, the third axiom is proved.

For the last axiom $x=x\meet x$, it is enough to prove
\[
\begin{tikzcd}
X\ar[d,"\xi_{X}"]&\\
\Xi\ar[r,\mono,"\Delta_{\Xi}"]\ar[rd,"\id{\Xi}"']&\Xi \times \Xi\ar[d,"\meet"]\\
&\Xi
\end{tikzcd}
\]
commutes for each $X \in \ob{\C}$, where $\Delta_{A}\colon A \subob A\times A$ denotes the diagonal map. Similarly, the commutativity of the following diagram
\[
\begin{tikzcd}
X\ar[d,"\xi_{X}"']\ar[r,\mono,"\Delta_{X}"]&X\times X\ar[d,"\xi_{X} \times \xi_{X}"']\ar[dd,bend left=50, "\xi_{X\times X}"]\\
\Xi\ar[r,\mono,"\Delta_{\Xi}"]\ar[rd,"\id{\Xi}"']&\Xi \times \Xi\ar[d,"\meet"']\\
&\Xi
\end{tikzcd}
\]
and the fact that $\Delta_{X}$ is monic, imply the fourth axiom. Now we completed the proof.
\end{proof}

As we promised at the beginning of this subsection, we observe the semilattice structure on a local state classifier of a presheaf topos.
\begin{example}[Presheaf topos]\label{semilatticestrinPresheaf}
The local state classifier $\Xi$ of the presheaf topos $\ps{\C}$ is explicitly described in Example \ref{LSCofPresheafTopos}. According to it, $\Xi c$ for $c \in \ob{\C}$ is the set of all co-subobjects of representable presheaves $\y{c}$. This set has the natural partial order structure given by the morphisms between co-subobjects. More precisely, for two given co-subobjects $q_{i}\colon \y{c}\quo E_i$ for $i=0,1$, an inequality $E_0 \leq E_1$ holds if and only if there is a morphism $f\colon E_0 \to E_1$ such that 
\[
\begin{tikzcd}[row sep = tiny]
&E_0\ar[dd,"f"]\\
\y{c}\ar[ru, \epi,"q_0"]\ar[rd,\epi ,"q_1"]&\\
&E_1
\end{tikzcd}
\]
commutes.

This poset $\Xi c$ has all finite meets, then is a semilattice. In fact, the meet of co-subobjects $q_{i}\colon \y{c}\quo E_i$ for $i=1, \dots n$ is given by the epi part of the epi-mono factorization of \[\langle q_1, \dots, q_n \rangle \colon  \y{c} \to E_1 \times \dots \times E_n.\]

By straightforward calculation, it is easily verified that this semilattice structure defines the internal semilattice structure on $\Xi$ and coincides with the one given in the Proposition \ref{semilatticeonXi}.
\end{example}

\begin{remark}[Local state classifier may not be a Heyting algebra]
The analogy with the subobject classifier might lead one to imagine that $\Xi$ has an internal Heyting algebra structure, and the above internal semilattice structure is the restriction of it. But, it is not the case.

For example, the group action topos $\ps{G}$ for some group $G$ gives a counterexample. In Example \ref{ExampleOfLSCOfGroupActionTopos}, we have seen that a local state classifier of $\ps{G}$ is given by the set of all subgroups of $G$. As a special case of Example \ref{semilatticestrinPresheaf}, its canonical internal semilattice structure coincides with the one given by the usual inclusion relation of subgroups.
The semilattice structure of $\SubGrp{G}$ is not necessarily able to be extended to a Heyting algebra structure. For example, $\SubGrp{\Z / 2\Z \times  \Z / 2\Z}$ has the following Hasse diagram:
\[
\begin{tikzpicture}
\draw[gray, thick] (0,1) -- (1,0);
\draw[gray, thick] (0,1) -- (0,0);
\draw[gray, thick] (0,1) -- (-1,0);
\draw[gray, thick] (0,-1) -- (1,0);
\draw[gray, thick] (0,-1) -- (0,0);
\draw[gray, thick] (0,-1) -- (-1,0);
\filldraw[black] (0,0) circle (2pt);
\filldraw[black] (-1,0) circle (2pt);
\filldraw[black] (1,0) circle (2pt);
\filldraw[black] (0,1) circle (2pt);
\filldraw[black] (0,-1) circle (2pt);
\end{tikzpicture}
\]
and is not a Heyting algebra, since it is not distributive.
\end{remark}

\section{Internal parameterization of hyperconnected quotients}\label{SecIPH}
The aim of this section is to prove the following main theorem, internal parameterization of hyperconnected quotients. (See Appendix \ref{AppendixInternalSemilattice} for the definition and properties of internal filters.)
\begin{theorem}[Main theorem]\label{mainTheorem}
Let $\E$ be a topos with a local state classifier $\{\xi_{X}\colon X\to\Xi\}_{X\in \ob{\E}}$ (for example, an arbitrary Grothendieck topos). Then the following three concepts correspond bijectively.
\begin{enumerate}
    \item hyperconnected quotients of $\E$
    \item internal filters of $\Xi$
    \item internal semilattice homomorphisms $\Xi\to \Omega$
\end{enumerate}
\end{theorem}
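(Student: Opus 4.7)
The plan is to build mutually inverse constructions $\Phi\colon \Q \mapsto \chi_\Q$ and $\Psi\colon \chi \mapsto \Q_\chi$ between hyperconnected quotients of $\E$ and internal semilattice homomorphisms $\Xi \to \Omega$, and to deduce the equivalence with internal filters from the standard correspondence between filters of an internal semilattice and semilattice maps to $\Omega$ (recorded in Appendix \ref{AppendixInternalSemilattice}). Thus (2) $\Leftrightarrow$ (3) is formal, and the real content is (1) $\Leftrightarrow$ (3).

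For the direction (1) $\to$ (3), I would start from a hyperconnected quotient $\Q$, regarded via its essential image as a replete full subcategory of $\E$ closed under subobjects, quotients, finite limits, and coproducts, containing $\1$. For each $X \in \ob{\E}$ write $\langle x\rangle \subob X$ for the image of a generalized element $x \colon Z \to X$, and let $\widetilde{\chi}_X \colon X \to \Omega$ classify the subobject of elements $x$ with $\langle x\rangle \in \Q$. Since the image $\langle x\rangle$ is unchanged when $x$ is precomposed with or factored through a monomorphism, $\{\widetilde{\chi}_X\}_X$ is a locally determined cocone, whose universal factorization through $\{\xi_X\}$ defines $\chi_\Q \colon \Xi \to \Omega$. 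To see $\chi_\Q$ is a semilattice homomorphism: top preservation follows from $\top = \xi_{\1}$ (Proposition \ref{semilatticeonXi}) and $\1 \in \Q$; meet preservation reduces, via $\meet \circ (\xi_X \times \xi_Y) = \xi_{X \times Y}$, to the biconditional $\langle (x,y)\rangle \in \Q \iff \langle x\rangle, \langle y\rangle \in \Q$, whose forward direction uses closure of $\Q$ under quotients (the projections restrict to epimorphisms $\langle (x,y)\rangle \quo \langle x\rangle$ and $\langle (x,y)\rangle \quo \langle y\rangle$) and whose reverse uses $\langle (x,y)\rangle \subob \langle x\rangle \times \langle y\rangle$ together with closure of $\Q$ under binary products and subobjects.

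For the direction (3) $\to$ (1), given $\chi$ classifying the filter $F \subob \Xi$, I would define
\[
\Q_\chi \defeq \{X \in \ob{\E} \mid \xi_X \text{ factors through } F \subob \Xi\}.
\]
Closure under subobjects is immediate from the cocone property of $\xi$; $\1 \in \Q_\chi$ because $\xi_{\1} = \top \in F$; closure under finite products follows from $\meet \circ (\xi_X \times \xi_Y) = \xi_{X \times Y}$ and the meet-closedness of $F$. Closure under quotients (and hence coproducts) requires a descent argument: for an epi $q \colon X \quo Y$ with $X \in \Q_\chi$ and a generalized element $y$ of $Y$, one locally lifts $y$ to an element $x$ of $X$; the restriction of $q$ gives an epimorphism between cyclic subobjects $\langle x\rangle \quo \langle q(x)\rangle$, placing $\xi_X(x) \leq \xi_Y(y)$ in the $\Xi$-order on the cover, so upward-closedness of $F$ yields local membership, and since $F \subob \Xi$ is itself a subsheaf, membership globalizes.

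The hard part will be promoting the well-structured full subcategory $\Q_\chi$ to an actual hyperconnected quotient. I would invoke Rosenthal's classification \cite{rosenthal1982quotient} in the Grothendieck setting, matching $\Q_\chi$ with a quotient system; alternatively, one can construct the reflector $L \colon \E \to \Q_\chi$ using accessibility of $\E$ and closure of $\Q_\chi$ under limits, then exploit closure under subquotients to verify left exactness of $L$ and monicity of unit and counit, thereby obtaining a hyperconnected geometric morphism via condition (3) of Definition \ref{DefOfhyperconn}. Finally, the two constructions are mutually inverse: $\Psi\Phi(\Q) = \Q$ holds because every object $X$ is covered by its cyclic subobjects $\langle x\rangle \subob X$ and $\Q$ is closed under subquotients and coproducts; $\Phi\Psi(\chi) = \chi$ is checked by evaluating both sides on $\xi_X(x)$ and using the epi-mono factorization $Z \quo \langle x\rangle \subob X$ together with the fact that an epi $Z \quo \langle x\rangle$ composed with $\xi_{\langle x\rangle}$ factors through the mono $F \subob \Xi$ if and only if $\xi_{\langle x\rangle}$ itself does (by the diagonal fill-in available in a topos).
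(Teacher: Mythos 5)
Your overall architecture matches the paper's: the (2)$\leftrightarrow$(3) step via Proposition \ref{PropUniversalityOfSubobjectClassifierAsSemilattice}; the passage from a hyperconnected quotient to a locally determined family of classifying maps (the paper packages this as ``coherent families'' in Lemma \ref{LemmaCoherentFamilyandHyperconnectedQuotient} and Proposition \ref{BroaderCorrespondence}, using the counit $\epsilon_X\colon GX\subob X$ of the induced comonad where you use cyclic subobjects $\langle x\rangle$, which amounts to the same subobject); and the translation of the two semilattice axioms into $S_{\1}\cong\1$ and $S_{X\times Y}=S_X\times S_Y$ (Lemma \ref{LemmaequiHom}). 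Up to that point your argument is essentially the paper's.

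The genuine gap is the step you yourself flag as ``the hard part'': producing the geometric morphism from the full subcategory $\Q_\chi$. Both escape routes you offer fall short of the stated theorem. Invoking Rosenthal's quotient systems, or accessibility of $\E$, confines you to Grothendieck toposes, whereas the theorem is asserted for an arbitrary topos with a local state classifier (the Grothendieck case is only an instance). Moreover, the adjoint you need is a \emph{right} adjoint to the inclusion $\Q_\chi\hookrightarrow\E$: the inclusion plays the role of $f^*$ and is already left exact because $\Q_\chi$ is closed under finite limits, and what must be built is the direct image $f_*$, i.e.\ a coreflector. Your plan to ``construct the reflector $L$ \dots\ and verify left exactness of $L$'' has the adjunction the wrong way around --- that is the template for a subtopos, not for a hyperconnected quotient. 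The paper closes this gap with a short explicit construction: the coreflection of $Y$ is the pullback $S_Y=\xi_Y^{*}F$ with counit $m_Y\colon S_Y\subob Y$, and its universal property follows from the inequality $\xi_X\leq\xi_Y\circ f$ for an \emph{arbitrary} morphism $f\colon X\to Y$ (Lemma \ref{orderLemma}, proved by factoring through the monomorphism $\langle\id{X},f\rangle\colon X\subob X\times Y$), combined with upward-closedness of $F$, which forces $\xi_Y\circ f$, and hence $f$, to lift. Comonadicity then makes $\Q_\chi$ a topos, and closure under subobjects gives hyperconnectedness; in particular the paper never needs your descent argument for closure under quotients. If you add Lemma \ref{orderLemma} and the pullback coreflector, your proof goes through in the stated generality; without them it does not.
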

From the point of view of the analogy to the case of subtoposes, the correspondence between (1) and (3) is essential: in the case of subtopos, (1) is a subtopos and (3) is a Lawvere-Tierney topology. In order to claim that this is the internal parameterization of hyperconnected quotients, the correspondence between (1) and (3) is enough. Then, why do we explicitly write (2) internal filters? One reason is that an internal filter is nothing but a \dq{good subobject} promised in the end of subsection \ref{subsectionNecessityandInevita}. Another reason is that in some concrete examples, internal filters are easier to deal with (see section \ref{SecExampleApplication}).

Our proof is divided into two steps, subsection \ref{subsectionBroaderCorrespondence} and subsection \ref{subsectionMainTheorem}. Eventually, we want to prove that (3) internal semilattice homomorphisms $\Xi \to \Omega$ correspond to (1) hyperconnected quotients. (The correspondence between (2) internal filters and (3) internal homomorphisms is immediately obtained by a general fact, Proposition \ref{PropUniversalityOfSubobjectClassifierAsSemilattice}) 
However, before proving it, as the first step, we prove a broader correspondence between morphisms $\Xi \to \Omega$ (not necessarily preserve semilattice structure) and what we call \dq{coherent families} in subsection \ref{subsectionBroaderCorrespondence}. Our main theorem is obtained by restricting this broader correspondence. In other words, the first step is the construction of the correspondence, and the second step is the restriction of it using internal semilattice structures of $\Xi$ and $\Omega$.

\subsection{Broader correspondence}\label{subsectionBroaderCorrespondence}
In this subsection, we construct the broader correspondence as a preparation for our main theorem.
To state the correspondence rigorously, we introduce one terminology, a coherent family of subobjects.
\begin{definition}[Coherent family]
For a topos $\E$, a family of subobjects of all objects in $\E$
\[
\{m_X \colon  S_X \subob X\}_{X\in \ob{\E}}
\]
 is said to be \emph{coherent}, if for any monomorphism $l\colon  X\to Y$ in $\E$, $S_X$ is a pullback of $S_Y$ along $l$
 \[
 \begin{tikzcd}
 S_X\ar[r,dotted,"\exists"]\ar[d,\mono,"m_X"]&S_Y\ar[d,\mono,"m_Y"]\\
 X\ar[r,\mono,"l"]&{Y.}
 \end{tikzcd}
 \]
\end{definition}

Before stating and proving the broader correspondence, we clarify that the notion of coherent families generalizes hyperconnected quotients.

\begin{lemma}[Hyperconnected quotient defines a coherent family]\label{LemmaCoherentFamilyandHyperconnectedQuotient}
Let $\E$ be a topos and $\E\to \F$ be a hyperconnected geometric morphism.
Let $G$ denote the corresponding left exact comonad, and $\epsilon\colon G\Rightarrow \id{\E}$ denote its counit.

Then, $\{\epsilon_X \colon  GX\to X\}_{X\in \ob{\E}}$ is a coherent family of subobjects. Moreover, the hyperconnected quotient is able to be recovered (as a replete full subcategory)
from its induced coherent family $\{\epsilon_X \colon  GX\to X\}$ by collecting objects such that $\epsilon_X$ is isomorphic.
\end{lemma}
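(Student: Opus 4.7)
The plan is to take $G = f^* f_*$ as the comonad in question, with counit $\epsilon$ coming from the adjunction $f^* \dashv f_*$. By Definition \ref{DefOfhyperconn}(3), each component $\epsilon_X \colon GX \to X$ is monic, so we genuinely have a family of subobjects. I would prove both assertions by first establishing the key auxiliary fact that $\epsilon_X$ is the \emph{maximal subobject of $X$ lying in the essential image of $f^*$}: any monomorphism $m \colon Z \hookrightarrow X$ with $Z$ in the essential image of $f^*$ factors uniquely through $\epsilon_X$. The factorization is produced by the adjunction, namely $m = \epsilon_X \circ f^*(\tilde m)$ where $\tilde m \colon f_* Z \to f_* X$ is the mate of $m$ (using that $Z \cong f^* f_* Z$ because $f^*$ is fully faithful), and uniqueness follows from $\epsilon_X$ being monic.

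For the coherent family claim, given a monomorphism $l \colon X \hookrightarrow Y$, I would form the pullback $P := X \times_Y GY$ and show $P \cong GX$ over $X$. Since $\epsilon_Y$ is monic, the projection $P \to X$ is monic; and the other projection $P \to GY = f^* f_* Y$ exhibits $P$ as a subobject of an object in the essential image of $f^*$. By Definition \ref{DefOfhyperconn}(1), this essential image is closed under subobjects in $\E$, so $P$ itself lies in the essential image. The maximality property then factors $P \hookrightarrow X$ through $\epsilon_X$, yielding a morphism $j \colon P \to GX$. Conversely, the naturality square $l \circ \epsilon_X = \epsilon_Y \circ Gl$ induces the pullback comparison $u \colon GX \to P$. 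One then checks that $j$ and $u$ are mutually inverse, using that $\epsilon_X$ and both pullback projections are monic (so each composite agrees with the identity after post-composing with a common monomorphism).

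For the recovery statement, I would show that $X$ lies in the essential image of $f^*$ if and only if $\epsilon_X$ is an isomorphism. The "if" direction is immediate from $X \cong GX = f^*(f_* X)$. For "only if", suppose $X \cong f^* C$. The triangle identity $\epsilon_{f^* C} \circ f^*(\eta_C) = \id{f^* C}$, together with the fact that the unit $\eta$ is an isomorphism (because $f^*$ is fully faithful), exhibits $\epsilon_{f^* C}$ as having a two-sided inverse. Transporting along the chosen isomorphism $X \cong f^* C$ shows $\epsilon_X$ is iso. This gives the description of the essential image of $f^*$ as a replete full subcategory in the claimed form.

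The main obstacle will be organizing the pullback argument for coherence cleanly: one has to verify that the comparison morphisms $j$ and $u$ are mutually inverse, which amounts to using uniqueness of the factorization through the monic $\epsilon_X$ on one side, and the universal property of the pullback on the other. Packaging everything through the maximality property of $\epsilon_X$ keeps the bookkeeping manageable and isolates the role of "essential image closed under subobjects" to a single clean step.
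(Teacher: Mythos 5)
Your proposal is correct and follows essentially the same route as the paper: both form the pullback $P$ of $\epsilon_Y$ along the monomorphism, get $GX\le P$ from naturality and the universal property of the pullback, and get $P\le GX$ from the fact that $P$ is a subobject of $GY$ (hence lies in the essential image, which is closed under subobjects) together with the universal property of the counit — which is exactly your ``maximality of $\epsilon_X$'' lemma. The recovery statement is likewise handled the same way; the paper merely notes it holds for any coreflective subcategory, while you spell out the triangle-identity argument.
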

\begin{proof}
At first, by Definition \ref{DefOfhyperconn}, $\{\epsilon_X \colon  GX\to X\}$ is a family of monomorphisms, i.e., a family of subobjects. We prove that for any monomorphism $\iota\colon  X\subob Y$, 
\[
\begin{tikzcd}
GX\ar[r,\mono, "G\iota"]\ar[d,\mono, "\epsilon_X"]&GY\ar[d,\mono,"\epsilon_Y"]\\
X\ar[r,\mono,"\iota"]&Y
\end{tikzcd}
\]
is a pullback diagram. Let 
\[
\begin{tikzcd}
P\ar[r,\mono]\ar[d,\mono]&GY\ar[d,\mono,"\epsilon_Y"]\\
X\ar[r,\mono,"\iota"]&Y
\end{tikzcd}
\]
be a pullback diagram. Then, $GX \leq P$ holds in the poset of subobjects of $X$, because of the universal property of pullback $P$. The converse inequality $P\leq GX$ is implied by the universal property of the counit $\epsilon_X$ because $P$ is a subobject of $GY$ and the essential image of the inverse image functor of a hyperconnected geometric morphism is closed under taking subobjects (see Definition \ref{DefOfhyperconn}).

The latter statement holds for general coreflective subcategories, not limited to hyperconnected quotients.
\end{proof}

By the above lemma, we can now see that the following correspondence is a broader version of the main theorem.

\begin{proposition}[Broader correspondence]\label{BroaderCorrespondence}
Let $\E$ be a topos with a local state classifier $\{\xi_{X}\colon X\to\Xi\}_{X\in \ob{\E}}$. Then the following three concepts correspond bijectively.
\begin{enumerate}
    \item coherent families of subobjects
    \item subobjects of $\Xi$
    \item morphisms $\Xi\to \Omega$
\end{enumerate}
\end{proposition}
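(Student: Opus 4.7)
The plan is to route everything through classifying maps, noting first that the bijection between (2) subobjects of $\Xi$ and (3) morphisms $\Xi \to \Omega$ is merely the defining universal property of the subobject classifier. So all the real content lies in the correspondence between (1) and (2), which I would construct explicitly in both directions.

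From (2) to (1), given a subobject $m \colon S \subob \Xi$, I would define $S_X \subob X$ by pulling back $m$ along $\xi_X \colon X \to \Xi$; pullbacks of monomorphisms are monomorphisms, so this is a subobject. For coherence, for any monomorphism $l \colon X \subob Y$, the cocone identity $\xi_X = \xi_Y \circ l$ (which holds because $\{\xi_X\}$ is a locally determined cocone) lets me paste pullbacks in
\[
\begin{tikzcd}
S_X \ar[r] \ar[d, \mono] & S_Y \ar[r] \ar[d, \mono] & S \ar[d, \mono, "m"] \\
X \ar[r, \mono, "l"] & Y \ar[r, "\xi_Y"] & \Xi
\end{tikzcd}
\]
to conclude that the left square is a pullback, which is exactly the coherence condition.

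From (1) to (2), given a coherent family $\{m_X \colon S_X \subob X\}$, let $\chi_{S_X} \colon X \to \Omega$ be the classifying map of $S_X$. By the universal property of $\Omega$, the statement ``$S_X$ is the pullback of $S_Y$ along $l$'' is equivalent to $\chi_{S_Y} \circ l = \chi_{S_X}$. Hence coherence is precisely the assertion that $\{\chi_{S_X}\}_{X \in \ob{\E}}$ is a locally determined cocone in the sense of Definition \ref{DefOfLocallyDeterminedCocone}, and the universal property of $\Xi$ (Definition \ref{DefLocalStateClassifier}) yields a unique morphism $\chi \colon \Xi \to \Omega$ with $\chi \circ \xi_X = \chi_{S_X}$ for all $X$. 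The associated subobject of $\Xi$ is then the one classified by $\chi$.

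Finally, I would verify that these two passages are mutually inverse. Starting from $m \colon S \subob \Xi$, the pulled-back subobjects $S_X$ are classified by $\chi_S \circ \xi_X$, so by uniqueness the factored map from $\Xi$ equals $\chi_S$, recovering $S$. Conversely, starting from a coherent family $\{S_X\}$, the constructed subobject of $\Xi$ is classified by $\chi$ with $\chi \circ \xi_X = \chi_{S_X}$, so its pullback along $\xi_X$ is classified by $\chi_{S_X}$, hence equals $S_X$. I do not anticipate a serious obstacle; the only substantive step is the dictionary translating coherence into the locally-determined-cocone condition, which is mediated entirely by the fact that $\Omega$ converts pullback squares into commutative triangles. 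Everything else is routine universal-property bookkeeping.
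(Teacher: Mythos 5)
Your proposal is correct and follows essentially the same route as the paper: the (2)–(3) bijection is the universal property of $\Omega$, and the heart of the matter is the observation that a family $\{m_X\colon S_X\subob X\}$ is coherent exactly when the characteristic maps $\{\chi_{S_X}\colon X\to\Omega\}$ form a locally determined cocone, hence factor uniquely through $\Xi$. The paper states this more tersely (going directly from (1) to (3)), while you additionally spell out the pullback-pasting argument and the mutual-inverse check, but the content is the same.
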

\begin{proof}
The correspondence between 2 and 3 is obvious from the universal property of the subobject classifier $\Omega$. For a given family of subobjects $\{m_{X}\colon S_{X}\subob X\}$, it is coherent if and only if the corresponding characteristic morphisms $\{\chi_{S_X} \colon X\to \Omega\}$ is locally determined. Then, the correspondence between 1 and 3 is verified.
\end{proof}

\subsection{Main theorem}\label{subsectionMainTheorem}
In this section, we prepare two lemmas and prove our main theorem (Theorem \ref{mainTheorem}).

The first lemma states that a morphism $f\colon \Xi \to \Omega$ is an internal semilattice homomorphism if and only if the corresponding coherent family of subobjects is compatible with finite products.
\begin{lemma}\label{LemmaequiHom}
Let $\E$ be a topos with a local state classifier $\{\xi_{X}\colon X\to\Xi\}_{X\in \ob{\E}}$ and $\{m_X \colon  S_X \subob X\}$ be a coherent family of subobjects. Then for the corresponding morphism $f\colon \Xi \to \Omega$ given by Proposition \ref{BroaderCorrespondence}, 
\begin{enumerate}
    \item f preserves $\top$ if and only if $m_{\1}\colon S_{\1}\subob X$ is isomorphic.
    \item f preserves $\meet$ if and only if for any $X,Y \in \ob{\E}$, $m_{X\times Y}\colon S_{X\times Y}\subob X\times Y$ and ${m_X}\times {m_Y}\colon  {S_X}\times{S_Y}\subob X\times Y$ are equal as subobjects of $X\times Y$.
\end{enumerate}
\end{lemma}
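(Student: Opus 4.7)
The plan is to run everything through the characterising equation $f\circ \xi_X = \chi_{S_X}$ supplied by Proposition \ref{BroaderCorrespondence}, and then to unfold the axioms ``$f$ preserves $\top$'' and ``$f$ preserves $\meet$'' using the explicit descriptions of $\top_{\Xi}, \meet_{\Xi}$ from subsection \ref{subsectionSemilatticeStructure} together with the standard descriptions of $\top_{\Omega}, \meet_{\Omega}$ on the subobject classifier.

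For (1), recall from Proposition \ref{semilatticeonXi} that $\top_{\Xi} = \meet_0 = \xi_{\1}$. Hence $f$ preserves the top element if and only if $f\circ \xi_{\1} = \top_{\Omega}$, which by the defining property of $f$ is the equation $\chi_{S_{\1}} = \top_{\Omega}$. By the uniqueness of characteristic maps this holds precisely when $m_{\1}\colon S_{\1}\subob \1$ is isomorphic.

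For (2), I would invoke Lemma \ref{UniXiN} with $n=2$: the family $\{\xi_X\times \xi_Y\colon X\times Y \to \Xi\times\Xi\}_{X,Y}$ is a colimit cocone, so the desired equality $f\circ\meet_{\Xi} = \meet_{\Omega}\circ(f\times f)$ of morphisms $\Xi\times\Xi \to \Omega$ can be checked after precomposing with each $\xi_X\times\xi_Y$. The left side becomes
\[
f\circ \meet_{\Xi}\circ(\xi_X\times\xi_Y) \;=\; f\circ \xi_{X\times Y} \;=\; \chi_{S_{X\times Y}},
\]
by the defining equation of $\meet_{\Xi} = \meet_2$ and of $f$. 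The right side becomes
\[
\meet_{\Omega}\circ(\chi_{S_X}\times \chi_{S_Y}),
\]
which, by the standard description of binary meet on $\Omega$ as intersection of subobjects, is the characteristic map of the pullback intersection of $S_X\times Y$ and $X\times S_Y$ inside $X\times Y$; that intersection is precisely the subobject $m_X\times m_Y\colon S_X\times S_Y \subob X\times Y$. Equating characteristic maps and again invoking uniqueness yields the stated equality of subobjects $m_{X\times Y} = m_X\times m_Y$.

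The proof is essentially bookkeeping; there is no real creative leap. The two points that need care are (a) invoking Lemma \ref{UniXiN} to reduce the meet axiom to a pointwise check indexed by pairs $(X,Y)$, and (b) the standard but easy identification of $\meet_{\Omega}\circ(\chi_{S_X}\times\chi_{S_Y})$ as the characteristic map of the product subobject $S_X\times S_Y$, which uses that $\chi_{S_X}\circ\pi_X$ and $\chi_{S_Y}\circ\pi_Y$ classify the two ``cylinder'' subobjects whose intersection is $S_X\times S_Y$.
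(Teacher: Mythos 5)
Your proof is correct and follows essentially the same route as the paper: both parts reduce the preservation equations to the identity $f\circ\xi_X=\chi_{S_X}$, with (2) using Lemma \ref{UniXiN} and the defining property of $\meet_2$ to check the square after precomposition with $\xi_X\times\xi_Y$, and then comparing characteristic maps of subobjects. The only difference is cosmetic: you spell out the identification of $\meet_{\Omega}\circ(\chi_{S_X}\times\chi_{S_Y})$ with $\chi_{S_X\times S_Y}$ via the two cylinder subobjects, where the paper compresses this into ``elementary calculus of pullbacks.''
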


\begin{proof}
\begin{enumerate}
    \item $f$ preserves $\top$ means that the following diagram commutes.\[
    \begin{tikzcd}[row sep = tiny]
    &\Xi\ar[dd,"f"]\\
    \1\ar[ru,"\xi_{\1}"]\ar[rd,"\true"']&\\
    &\Omega
    \end{tikzcd}\]
    By taking corresponding subobjects of the above two morphisms $\1 \to \Omega$, the former part of the lemma is proved.
    
    \item $f$ preserves $\meet$ means that 
    \[
    \begin{tikzcd}
    \Xi \times \Xi\ar[r,"\meet"]\ar[d,"f\times f"']&\Xi\ar[d,"f"]\\
    \Omega \times \Omega\ar[r,"\meet"]& \Omega
    \end{tikzcd}
    \]
    commutes. By Lemma \ref{UniXiN}, this is equivalent to the commutativities of 
    \[
    \begin{tikzcd}
    X\times Y\ar[d,"\xi_{X} \times \xi_{Y}"']&\\
    \Xi \times \Xi\ar[r,"\meet"]\ar[d,"f\times f"']&\Xi\ar[d,"f"]\\
    \Omega \times \Omega\ar[r,"\meet"]& \Omega
    \end{tikzcd}
    \] for all $X,Y \in \ob{\E}$.
    By definition of $\meet$, it is equivalent to the commutativity of 
    \[
    \begin{tikzcd}
    X\times Y\ar[d,"\xi_{X} \times \xi_{Y}"']\ar[rd, "\xi_{X\times Y}", bend left]&\\
    \Xi \times \Xi\ar[d,"f\times f"']&\Xi\ar[d,"f"]\\
    \Omega \times \Omega\ar[r,"\meet"]& \Omega .
    \end{tikzcd}
    \] By taking corresponding subobjects and elementary calculus of pullbacks, one can check that it is equivalent to \[S_X \times S_Y = S_{X\times Y}\] as subobjects of $X\times Y$. 
    The latter part of the lemma is also proved.
\end{enumerate}
\end{proof}

\begin{remark}
The conditions (1) and (2) in Lemma \ref{LemmaequiHom} is properly stronger than the closedness by finite products. More precisely, although the lemma implies that if $f\colon \Xi \to \Omega$ is an internal semilattice homomorphism, the full subcategory that consists of $\{S_{X}\mid X \in \ob{\E}\}$ is closed under finite products, the reverse implication is not true. 

For example, let $\E$ be the topos of $\Z$-action $\ps{\Z}$ and $S_X \subob X$ be the subobject consisting of orbits of $X$ whose cardinalities are $1$ or infinite. Then, $\{S_X \subob X\}$ is coherent, and $\{S_{X}\mid X \in \ob{\E}\}$ is closed under finite product, but the corresponding morphism $\Xi \to \Omega$ is not an internal semilattice homomorphism. In fact, let $X=\Z$ and $Y=\Z / 2\Z$ with the usual action by $\Z$, then $S_{X\times Y}$ is $X\times Y$ itself, but $S_X \times S_Y =\Z \times \emptyset =\emptyset$.
\end{remark}

Now we turn to the second lemma.
Recall that 
\[
\begin{tikzcd}[row sep =small]
X\ar[rr,"\xi_{X}",""'{name=A}]\ar[rd,"f"',\mono]&&\Xi\\
&Y\ar[ru,"\xi_{Y}"']&
\end{tikzcd}
\]is commutative for any monomorphism $f\colon X\subob Y$, by definition of $\Xi$.
The second lemma can be regarded as an analogous property for every morphism $f\colon X\to Y$, which is not necessarily monic. To state the lemma rigorously, notice that for any object $X \in \ob{\E}$, the hom-set $\E(X, \Xi)$ has a semilattice structure induced by that of $\Xi$, as long as $\E$ has the local state classifier $\Xi$. In particular, we have an order structure on $\E(X, \Xi)$. Explicitly, for a parallel pair of morphisms $f,g\colon  X\to \Xi$, the inequality $f\leq g$ is defined to be $f=f\meet g$. In other words, the inequality $f\leq g$ means that the following diagram 
\[
\begin{tikzcd}[row sep = tiny]
X\ar[rd,"f"]\ar[dd,"{\langle f,g\rangle}"']&\\
&\Xi\\
\Xi\times \Xi\ar[ru,"\meet"']
\end{tikzcd}
\]
commutes.


\begin{lemma}\label{orderLemma}
Let $\E$ be a topos with a local state classifier $\{\xi_{X}\colon X\to\Xi\}_{X\in \ob{\E}}$. Then, for any morphism $f\colon X\to Y$ in $\E$, an inequality $\xi_{X} \leq \xi_Y \circ f$ holds.
\[
\begin{tikzcd}[row sep =small]
X\ar[rr,"\xi_{X}",""'{name=A}]\ar[rd,"f"']&&\Xi\\
&Y\ar[ru,"\xi_{Y}"']\ar[to = A ,phantom, "{\rotatebox{90}{$\geq$}}"]&
\end{tikzcd}
\]
\end{lemma}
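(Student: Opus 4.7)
The plan is to realize $f$ through its graph and reduce the claimed inequality to the locally determined cocone condition together with the defining property of $\meet$. Concretely, I want to consider the graph morphism
\[
\Gamma_{f} \defeq \langle \id{X}, f\rangle \colon X \to X\times Y.
\]
This morphism is always a monomorphism, because the first projection $\mathrm{pr}_{1}\colon X\times Y\to X$ is a retraction of it: $\mathrm{pr}_{1}\circ \Gamma_{f} = \id{X}$. Hence any two morphisms equalized by $\Gamma_{f}$ are already equal.

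Next, since $\{\xi_{X}\}$ is a locally determined cocone, the monomorphism $\Gamma_{f}$ gives the triangle
\[
\begin{tikzcd}[column sep=small]
X\ar[rr,"\Gamma_{f}",\mono]\ar[rd,"\xi_{X}"']&&X\times Y\ar[ld,"\xi_{X\times Y}"]\\
&\Xi.&
\end{tikzcd}
\]
On the other hand, the defining property of $\meet = \meet_{2}$ from Lemma \ref{UniXiN} and the paragraph after it yields
\[
\xi_{X\times Y} \;=\; \meet\circ (\xi_{X}\times \xi_{Y}).
\]
Composing these two equalities with $\Gamma_{f}$ gives
\[
\xi_{X} \;=\; \meet \circ (\xi_{X}\times \xi_{Y})\circ \langle \id{X},f\rangle \;=\; \meet\circ \langle \xi_{X}, \xi_{Y}\circ f\rangle,
\]
which is precisely the unfolding of the inequality $\xi_{X}\leq \xi_{Y}\circ f$ in the induced semilattice order on the hom-set $\E(X,\Xi)$, as recalled just before the statement of the lemma.

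There is no real obstacle here: once one has the idea of using the graph of $f$ to convert an arbitrary morphism into a monomorphism landing in a product, the rest is a straightforward chase combining the universal property of $\Xi$ from Definition \ref{DefLocalStateClassifier} with the construction of $\meet$ from Proposition \ref{semilatticeonXi}. The only point deserving attention is to verify that $\Gamma_{f}$ is monic so that the locally determined cocone condition genuinely applies, but this is immediate from the retraction by $\mathrm{pr}_{1}$.
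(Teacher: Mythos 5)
Your proof is correct and follows essentially the same route as the paper: both use the graph monomorphism $\langle \id{X}, f\rangle\colon X\to X\times Y$, apply the locally determined cocone condition to identify $\xi_{X\times Y}\circ\langle \id{X},f\rangle$ with $\xi_{X}$, and invoke the defining property of $\meet$ to conclude $\xi_{X}=\meet\circ\langle\xi_{X},\xi_{Y}\circ f\rangle$. The only difference is that you spell out explicitly why $\langle\id{X},f\rangle$ is monic, which the paper leaves implicit.
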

\begin{proof}
By definition, $\xi_{X} \leq \xi_Y \circ f$ is equivalent to the commutativity of 
\[
\begin{tikzcd}[row sep = tiny]
X\ar[rd,"\xi_{X}"]\ar[dd,"{\langle \xi_{X},\xi_{Y} \circ f\rangle}"']&\\
&\Xi\\
\Xi\times \Xi\ar[ru,"\meet"']&.
\end{tikzcd}
\]
Then, it is enough to prove the commutativities of the following two triangles.
\[
\begin{tikzcd}[column sep = large]
X\ar[rd,"\xi_{X}"]\ar[d,"{\langle \id{X},f\rangle}"',\mono]&\\
X\times Y \ar[r,"\xi_{X \times Y}"]\ar[d,"\xi_{X} \times \xi_{Y}"']& \Xi\\
\Xi \times \Xi  \ar[ru,"\meet"']&
\end{tikzcd}
\] The upper one is commutative, since ${\langle \id{X},f\rangle}$ is monic and the cocone $\{\xi_{X}\colon X\to\Xi\}_{X\in \ob{\E}}$ is locally determined. The lower one is commutative by the definition of $\meet$.
\end{proof}
(The same proof works for a cartesian closed category $\E$ not limited to toposes, but we do not need that generality here.)

With these two lemmas, we proceed to the proof of the main theorem.
\begin{proof}[proof of Theorem \ref{mainTheorem}]
We restrict the correspondence given by Proposition \ref{BroaderCorrespondence} to the desired one. The correspondence between (2) internal filters and (3) internal semilattice homomorphisms is just a special case of Proposition \ref{PropUniversalityOfSubobjectClassifierAsSemilattice}. Therefore, it is enough to prove that a coherent family of subobjects gives a hyperconnected quotient (by Lemma \ref{LemmaCoherentFamilyandHyperconnectedQuotient}) if and only if the corresponding morphism $f:\Xi \to \Omega$ is an internal semilattice homomorphism (or equivalently, the corresponding subobject of $\Xi$ is an internal filter).

First, we prove that for any hyperconnected quotient $Q$ of $\E$, the corresponding morphism  $f\colon \Xi \to \Omega$ is an internal semilattice homomorphism.
Let $G\colon \E \to \E$ denote the left exact comonad induced by hyperconnected quotient $Q$ and $\epsilon\colon G \to \id{\E}$ denote its counit.
By the first lemma (Lemma \ref{LemmaequiHom}), it is enough to prove that $\epsilon_{\1}\colon G\1 \subob \1$ is isomorphic, and for any objects $X,Y \in \ob{\E}$ two subobjects $\epsilon_{X\times Y}\colon G(X\times Y) \subob X\times Y$ and $\epsilon_{X} \times \epsilon_{Y} \colon GX \times GY \subob X\times Y$ define the same subobject of $X \times Y$. Both conditions are implied by the fact that $G$ preserves finite products.

Second, we prove that for each internal filter $k\colon F \subob \Xi$, the corresponding coherent family of subobjects $\{m_{X}\colon S_X \subob X\}$ defines a hyperconnected quotient of $\E$. In detail, we prove the following two things.
\begin{itemize}
    \item The full subcategory $Q$ spanned by the objects $X$ whose $\xi_{X}\colon  X \to \Xi$ lifts along $k\colon F\subob \Xi$
    \[
    \begin{tikzcd}
    &F\ar[d,\mono, "k"]\\
    X\ar[ru,\dash]\ar[r,"\xi_{X}"]&\Xi
    \end{tikzcd}
    \]
    defines hyperconnected quotient of $\E$.
    \item Furthermore, the corresponding coherent family $\{m_{X}\colon S_X \subob X\}$ gives the counit of the hyperconnected geometric morphism.
\end{itemize}
Since $m_{X}\colon S_{X}\subob X$ is the pullback of $k$ along $\xi_{X}$
\[
\begin{tikzcd}
    S_{X}\ar[d,"m_{X}",\mono]\ar[r]\arrow[dr, phantom,""{pullback}, very near start]&F\ar[d,\mono, "k"]\\
    X\ar[r,"\xi_{X}"]&\Xi,
\end{tikzcd}
\] an object $X$ of $\E$ belongs $\Q$ if and only if $m_{X}$ is isomorphic. Therefore, it is followed that $\Q$ is closed under finite products, by the first lemma (Lemma \ref{LemmaequiHom}).

$\Q$ is also closed under taking subobjects because if $Y$ is an object in $\Q$ and $s\colon X\subob Y$ is a subobject, then $\xi_{X}$ lifts along $k$ as the following diagram
\[
\begin{tikzcd}
&&F\ar[d,"k",\mono]\\
X\ar[r,"s",\mono]\ar[rr,bend right, "\xi_{X}"'] &Y\ar[r,"\xi_{Y}"]\ar[ru, \dash]&\Xi.
\end{tikzcd}
\]

Since an equalizer is a special type of subobject, now we have proven that $\Q$ is closed under finite limits and subobjects.

Next, we prove that the embedding functor $\Q \to \E$ has a right adjoint and $\{m_{X} \colon  S_{X}\subob X\}_{X\in \ob{\E}}$ gives its counit. It is equivalent to say that for any object $X$ in $\Q$, $Y$ in $\E$, and a morphism $f\colon X\to Y$, $f$ has a (necessarily unique) lift along $m_{Y}$
\[
\begin{tikzcd}
&S_{Y}\ar[d,\mono,"m_{Y}"]\\
X\ar[r,"f"]\ar[ru,\dash]&Y,
\end{tikzcd}
\]
because one can easily check that $S_Y$ belongs to $\Q$.

Since the object $X$ belongs to $\Q$, the morphism $\xi_{X}$ lifts along $k\colon F\subob\Xi$. The fact that he internal filter $F$ is upward closed (see the third condition of Definition \ref{InternalFilter}) and the second lemma (Lemma \ref{orderLemma}) imply that the composite morphism $\xi_{Y}\circ f$ lifts along $k$
\[
\begin{tikzcd}
&&F\ar[d,"k",\mono]\\
X\ar[r,"f"]\ar[rru,\dash,bend left]&Y\ar[r,"\xi_{Y}"]&\Xi.
\end{tikzcd}
\]

Then, by the universal property of $S_X$ as a pullback, $f$ lifts along $m_{Y}$
\[
\begin{tikzcd}
&S_{Y}\ar[r]\ar[d,\mono,"m_{Y}"]\arrow[dr, phantom,""{pullback}, very near start]&F\ar[d,"k",\mono]\\
X\ar[r,"f"]\ar[ru,\dash]&Y\ar[r,"\xi_{Y}"]&\Xi.
\end{tikzcd}
\]
Therefore, $f$ lifts along $m_{Y}$, and $m_{Y}$ defines the component of the counit.

So far, we have proven that $\Q$ is a full subcategory closed under taking finite limits and subobjects, and the embedding functor $\Q \to \E$ is left exact left adjoint functor. Since $\Q$ is comonadic over $\E$ by a left exact comonad, $\Q$ is a topos (\cite[see][V.8.]{maclane2012sheaves}). Consequently, $\Q$ is a hyperconnected quotient of $\E$, since it is closed under subobjects. The proof of the main theorem is completed.
\end{proof}


\section{Examples and Applications}\label{SecExampleApplication}
In this subsection, we list some examples and applications.

\subsection{Number of hyperconnected quotients}
First, we prove the following immediate corollary about the number of hyperconnected quotients. Although the case for a Grothendieck topos can be implied by \cite{rosenthal1982quotient}, our corollary is applicable to a broader class of toposes and directly implied by the internal parameterization, just like the case of subtoposes.

\begin{corollary}[Smallness of the number of hyperconnected quotients]\label{CorNumberofHyperQuotients}
For a locally small topos with a local state classifier $\E$ (for example, an arbitrary Grothendieck topos), the  number of hyperconnected quotients of $\E$ is small.
\end{corollary}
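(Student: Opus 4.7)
The plan is to deduce this immediately from the main theorem (Theorem \ref{mainTheorem}) together with local smallness. Since $\E$ is assumed to have a local state classifier $\Xi$, the main theorem supplies a bijection between hyperconnected quotients of $\E$ and, say, the internal semilattice homomorphisms $\Xi \to \Omega$ (equivalently, internal filters of $\Xi$, i.e., certain subobjects of $\Xi$). It therefore suffices to show that one of these two classes forms a small set.

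First I would invoke local smallness applied to the pair of objects $\Xi$ and $\Omega$: the hom-set $\E(\Xi,\Omega)$ is a (small) set. The collection of internal semilattice homomorphisms $\Xi \to \Omega$ is a subclass of $\E(\Xi,\Omega)$ (cut out by the equations of preservation of $\top$ and $\meet$ as in Appendix \ref{AppendixInternalSemilattice}), hence is itself a small set. Alternatively one may work with internal filters: subobjects of $\Xi$ form a small set (characteristic morphisms $\Xi\to\Omega$ are parametrized by the small set $\E(\Xi,\Omega)$), and internal filters sit as a subclass of $\mathrm{Sub}(\Xi)$, so are again small. Chaining either bijection from Theorem \ref{mainTheorem} then yields a small parametrization of hyperconnected quotients.

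I do not anticipate a genuine obstacle here: the whole content has been packaged into Theorem \ref{mainTheorem}, and the corollary is a one-line consequence of local smallness, which is the point the author emphasizes (this is the same pattern by which the smallness of the class of subtoposes follows from the internal parameterization via Lawvere--Tierney topologies). The only thing to be a bit careful about is the (implicit) fact that ``hyperconnected quotient'' is taken up to the standard identification discussed in subsection \ref{SubsecHyperconnected}, so that the bijection of the main theorem is indeed a bijection of sets rather than of proper classes; once this is acknowledged, the proof is complete.
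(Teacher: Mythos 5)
Your proposal is correct and follows exactly the paper's argument: the main theorem gives a bijection with internal semilattice homomorphisms $\Xi\to\Omega$, which form a subclass of the small hom-set $\E(\Xi,\Omega)$ by local smallness. The additional remarks (the alternative route via subobjects of $\Xi$ and the caveat about the standard identification of hyperconnected quotients) are consistent with the paper but not needed beyond its one-line proof.
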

\begin{proof}
Since hyperconnected quotients correspond bijectively to the internal semilattice homomorphisms from the local state classifier $\Xi$ to the subobject classifier $\Omega$ (by our main theorem, Theorem \ref{mainTheorem}), the number of them is not larger than the cardinality of the hom-set $\E(\Xi,\Omega)$, which is small.
\end{proof}

\subsection{Toy example: the topos of directed graphs}
As a toy example, we classify all hyperconnected quotients of the topos of directed graphs $\DirGraph = \ps{\Par}$, using the main theorem. Before regarding the local state classifier of it, it is not hard to find two hyperconnected quotients, induced by full and bijective on objects functors (see Example \ref{HQfromfbo}). There are two full and bijective on objects functor from the parallel morphisms category $\Par$. The obvious one is $\id{\Par}$ and the other is $\Par\to \2$, where $\2$ denotes the two elements totally ordered set. Their corresponding hyperconnected quotients are $\DirGraph$ itself and $\ps{\2}$ respectively.

Now, we turn to a local state classifier. As observed in Example \ref{ExampleLSCofDirectedGraph}, the local state classifier $\Xi$ is the following directed graph:
\[
\begin{tikzcd}
\bullet\ar[loop left,"\bL"]\ar[loop right,"\bN ."]
\end{tikzcd}
\]
Since the internal semilattice structure is given by \[\bL \geq \bN ,\] there are exactly two internal filters, which are the maximum filter $\id{\Xi}\colon\Xi\to\Xi$ itself and the minimum filter $\top\colon\1\to \Xi$, visualized as follows
\[
\begin{tikzcd}
\bullet\ar[loop left,"\bL"]\ar[loop right,"\bN "]\\
\bullet\ar[loop left,"\bL"].
\end{tikzcd}
\]
The corresponding hyperconnected quotients of them are the two mentioned in the above paragraph. We can conclude that there are no other hyperconnected quotients, by the main theorem.

\subsection{Local state classifier of a localic Grothendieck topos}\label{subsectionLSCinLocalic}
Let $\E$ be a topos with a local state classifier $\Xi$. Then, the identity $\id{\Xi}\colon \Xi\to \Xi$ gives the maximum internal filter, and the top  global section $\top \colon \1\to \Xi$ gives the minimum internal filter. The corresponding hyperconnected quotients are the maximum hyperconnected quotient, i.e., $\E$ itself, and the minimum hyperconnected quotient.

By the main theorem and Example \ref{HQoflocalictopoi}, we got the following corollary, which is the converse statement of Example \ref{LSCofLocalicGrothendieckTopos}.
\begin{corollary}[] 
For a Grothendieck topos $\E$, $\E$ is localic if and only if its local state classifier is the terminal object.
\end{corollary}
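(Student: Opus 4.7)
The plan is to reduce this corollary directly to the main theorem and to the characterization of localic toposes already recalled in Example \ref{HQoflocalictopoi}. The forward direction ($\E$ localic $\Rightarrow$ $\Xi \cong \1$) is Example \ref{LSCofLocalicGrothendieckTopos}, so only the converse requires genuine argument.

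For the converse, I would argue as follows. Suppose $\Xi \cong \1$. Then $\Xi$, regarded as an internal semilattice, is the terminal internal semilattice: the only global section is $\top$, and the meet map is necessarily $\id{\1}$. Consequently, $\Xi$ admits a unique internal filter, namely $\id{\1}\colon \1 \subob \1$ (this is also transparent on the homomorphism side, since an internal semilattice homomorphism $\1 \to \Omega$ must preserve $\top$ and is therefore forced to be $\true$). By the main theorem (Theorem \ref{mainTheorem}), the number of hyperconnected quotients of $\E$ is in bijection with the set of internal filters of $\Xi$, so $\E$ has exactly one hyperconnected quotient. Since $\E \to \E$ itself is always a hyperconnected quotient (the identity, corresponding to the maximum filter $\id{\Xi}$), this unique hyperconnected quotient must be $\E$ itself.

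Finally, invoking the characterization recalled in Example \ref{HQoflocalictopoi} — namely that a Grothendieck topos is localic if and only if its only hyperconnected quotient is itself — we conclude that $\E$ is localic. There is essentially no obstacle: the main theorem does all the heavy lifting, and the argument amounts to observing that the terminal semilattice has a unique filter. The only point worth being careful about is to confirm that the maximum filter $\id{\Xi}$ corresponds under Theorem \ref{mainTheorem} to the identity hyperconnected quotient $\id{\E}$, which is immediate from the construction in the proof of Theorem \ref{mainTheorem} since the associated coherent family is $\{\id{X}\colon X \subob X\}$.
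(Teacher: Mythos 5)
Your proposal is correct and follows essentially the same route as the paper: the converse direction is exactly the paper's argument (terminal $\Xi$ has a unique internal filter, hence by Theorem \ref{mainTheorem} a unique hyperconnected quotient, which must be $\E$ itself, so $\E$ is localic by Example \ref{HQoflocalictopoi}), and for the forward direction you defer to Example \ref{LSCofLocalicGrothendieckTopos} where the paper instead notes that the maximum filter $\id{\Xi}$ and minimum filter $\top\colon\1\to\Xi$ must coincide, forcing $\Xi\cong\1$ — but both versions of that direction appear in the paper. No gaps.
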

See also Example \ref{ExampleTerminalLSC} for categories whose local state classifier is terminal.

\subsection{Quotients of Boolean toposes}\label{CorBoolean}
As we mentioned in Introduction (section \ref{introduction}), Lawvere's original question in \cite{OpenLawvere} sought the internal parameterization of all quotients, not hyperconnected quotients.

The difference between the two classes of quotients disappears for some toposes, including all Boolean toposes. 

\begin{proposition}\label{PropQuotientofBooleanTopos}
Let $\E$ be a Boolean topos. Every quotient of $\E$ is hyperconnected.
\end{proposition}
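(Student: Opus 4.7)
I would verify condition (1) of Definition \ref{DefOfhyperconn}: for a connected $f\colon \E\to\F$, the inverse image $f^{\ast}$ is fully faithful by hypothesis, so only the closure of its essential image under subobjects in $\E$ remains to be shown. Thus, the task is: given $A\in\F$ and an arbitrary subobject $Y\subob f^{\ast}A$ in $\E$, produce $B\in\F$ with $f^{\ast}B\cong Y$.

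The key structural input is that $\E$ is Boolean, so the canonical map $[\true,\bot]\colon \1\coprod\1\to\Omega_{\E}$ is an isomorphism. Combining this with the fact that $f^{\ast}$, as a left adjoint preserving the terminal, satisfies $f^{\ast}(\1\coprod\1)\cong\1\coprod\1$, we obtain a canonical identification $\Omega_{\E}\cong f^{\ast}(\1_{\F}\coprod\1_{\F})$. The characteristic morphism $\chi_{Y}\colon f^{\ast}A\to \Omega_{\E}$ then becomes a morphism $f^{\ast}A\to f^{\ast}(\1_{\F}\coprod\1_{\F})$, and since $f^{\ast}$ is full and faithful, it arises as $f^{\ast}(g)$ for a unique $g\colon A\to \1_{\F}\coprod \1_{\F}$ in $\F$.

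To finish, I would form in $\F$ the pullback $B$ of $\true\colon\1_{\F}\to\1_{\F}\coprod\1_{\F}$ along $g$, and apply $f^{\ast}$. Since $f^{\ast}$ preserves finite limits, the terminal object, and the coprojection $\true$, the resulting square in $\E$ is the pullback of $\true\colon\1_{\E}\to\Omega_{\E}$ along $\chi_{Y}$; that pullback is $Y$ by the defining property of $\chi_{Y}$. Hence $f^{\ast}B\cong Y$, so $Y$ lies in the essential image of $f^{\ast}$, completing the verification that $f$ is hyperconnected. The only subtle point, and the main thing one has to check, is that the identification $\Omega_{\E}\cong f^{\ast}(\1_{\F}\coprod\1_{\F})$ is compatible with $\true$ on both sides; this follows from the fact that $f^{\ast}$ preserves the terminal object and each coprojection of a binary coproduct, so no serious obstacle arises.
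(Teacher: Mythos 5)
Your proof is correct and follows essentially the same route as the paper: Booleanness gives $\Omega_{\E}\cong \1\coprod\1$, which lies in the essential image of $f^{\ast}$ since $f^{\ast}$ preserves the terminal object and finite coproducts; fullness then lets the characteristic map of a subobject descend to $\F$, and left exactness of $f^{\ast}$ transports the pullback along $\true$ back up. The paper phrases this in terms of the replete full subcategory $\Q$ being closed under finite limits and colimits, but the argument is the same.
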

\begin{proof}
Let $\Q$ be its quotient, regarded as a full subcategory of $\E$. The subobject classifier of a Boolean topos $\E$ is the coproduct of two copies of terminal objects $1\coprod 1$ (See \cite[][VI.1.]{maclane2012sheaves}). Since $\Q$ is closed under finite limits and finite colimits of $\E$, the subobject classifier of $\E$, $1\coprod 1$ belongs to $\Q$. By the universal property of the subobject classifier (in $\E$) and the fact that $\Q$ is closed under pullback, $\Q$ is closed under taking subobjects, i.e., hyperconnected.
\end{proof}

Then, we had the internal parameterization of all quotients for a Boolean topos with a local state classifier, just by erasing the adjective \dq{hyperconnected.}

\begin{corollary}[Internal parameterization of quotients of a Boolean topos]\label{CorInternalparameterizationOfQuotientsOfBooleanTopos}
Let $\E$ be a Boolean topos with a local state classifier $\{\xi_{X}\colon X\to\Xi\}_{X\in \ob{\E}}$ (for example, an arbitrary Boolean Grothendieck topos). Then the following three concepts correspond bijectively.
\begin{enumerate}
    \item quotients of $\E$
    \item internal filters of $\Xi$
    \item internal semilattice homomorphisms $\Xi\to \Omega$
\end{enumerate}
\end{corollary}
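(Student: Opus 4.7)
The plan is short: this corollary is obtained by combining Theorem \ref{mainTheorem} with Proposition \ref{PropQuotientofBooleanTopos}. The main theorem already establishes, for any topos $\E$ possessing a local state classifier, a bijective correspondence between hyperconnected quotients, internal filters of $\Xi$, and internal semilattice homomorphisms $\Xi \to \Omega$. Hence the only task is to replace ``hyperconnected quotient'' by ``quotient'' in the Boolean setting, which is exactly what Proposition \ref{PropQuotientofBooleanTopos} licenses.

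In more detail, given a quotient $\Q \hookrightarrow \E$, the reflective full subcategory $\Q$ is closed under the finite limits and finite colimits of $\E$. In the Boolean case, the subobject classifier of $\E$ is $\1 \coprod \1$, so it lies in $\Q$. Using the universal property of this subobject classifier together with closure of $\Q$ under pullbacks, one concludes that $\Q$ is closed under taking subobjects in $\E$; this is precisely characterization (1) of hyperconnected geometric morphisms in Definition \ref{DefOfhyperconn}. Therefore, in a Boolean topos the classes of quotients and of hyperconnected quotients coincide, and so the bijection of Theorem \ref{mainTheorem} transports verbatim to the one claimed here.

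There is really no substantive obstacle: everything hard has already been absorbed into Theorem \ref{mainTheorem} and Proposition \ref{PropQuotientofBooleanTopos}. The only minor point worth being explicit about is the $2$-categorical identification of quotients: being hyperconnected is a property of a connected geometric morphism stable under the standard equivalence of quotients used throughout the paper (since it refers only to the essential image of the inverse image), so identifying Boolean quotients with Boolean hyperconnected quotients is unambiguous and the bijections of (1) with (2) and (3) are preserved.
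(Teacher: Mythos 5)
Your proposal is correct and follows exactly the paper's route: the corollary is obtained by combining Theorem \ref{mainTheorem} with Proposition \ref{PropQuotientofBooleanTopos}, so that in the Boolean case the adjective ``hyperconnected'' can simply be erased. Your recapitulation of the argument that every quotient of a Boolean topos is hyperconnected matches the paper's proof of that proposition as well.
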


It is worth emphasizing that it gives a partial solution to Lawvere's open problem \cite{OpenLawvere}:
\begin{corollary}[Lawvere's open problem for Boolean toposes]\label{mainCor}
For a locally small Boolean topos with a local state classifier (for example, an arbitrary Boolean
Grothendieck topos), there exists an internal parameterization of quotients. In particular, the number of quotients is small.
\end{corollary}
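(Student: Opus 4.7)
The plan is to assemble this corollary directly from the machinery already in place, with essentially no new argument beyond combining two prior results. The statement has two assertions: existence of an internal parameterization of quotients, and smallness of the class of quotients.

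First I would invoke Corollary \ref{CorInternalparameterizationOfQuotientsOfBooleanTopos}, which already delivers the internal parameterization for a Boolean topos with a local state classifier. That corollary itself rests on Proposition \ref{PropQuotientofBooleanTopos} (in a Boolean topos, every quotient is hyperconnected, so the class of quotients coincides with the class of hyperconnected quotients) combined with the main theorem (Theorem \ref{mainTheorem}). So the first half of the present corollary is essentially a restatement: quotients of $\E$ biject with internal semilattice homomorphisms $\Xi \to \Omega$, and also with internal filters of $\Xi$.

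For the second half, I would use local smallness to deduce that the class of quotients is small. Because internal semilattice homomorphisms $\Xi \to \Omega$ form a subclass of the hom-set $\E(\Xi,\Omega)$, and $\E$ is locally small, this hom-set is a set. By the bijection from the first half, the class of quotients injects into this set and is therefore small. This mirrors Corollary \ref{CorNumberofHyperQuotients}, only with ``hyperconnected quotients'' replaced by ``quotients'' thanks to Proposition \ref{PropQuotientofBooleanTopos}.

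There is no real obstacle: the work has already been done in Proposition \ref{PropQuotientofBooleanTopos}, Theorem \ref{mainTheorem}, and Corollary \ref{CorInternalparameterizationOfQuotientsOfBooleanTopos}. The only thing to be careful about is confirming the hypothesis of Corollary \ref{CorInternalparameterizationOfQuotientsOfBooleanTopos} applies, i.e., that ``Boolean Grothendieck topos'' entails both the existence of a local state classifier (via subsection \ref{subsectionLSCofGrothendiecktopos}) and local smallness, both of which are standard. The final sentence of the corollary is then immediate from the cardinality bound $|\{\text{quotients of }\E\}| \leq |\E(\Xi,\Omega)|$.
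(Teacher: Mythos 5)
Your proposal is correct and matches the paper's own proof, which simply cites the preceding argument (Proposition \ref{PropQuotientofBooleanTopos} plus Corollary \ref{CorInternalparameterizationOfQuotientsOfBooleanTopos}) together with Corollary \ref{CorNumberofHyperQuotients} for the smallness claim. Your explicit cardinality bound via $\E(\Xi,\Omega)$ is exactly the argument underlying Corollary \ref{CorNumberofHyperQuotients}, so nothing is missing.
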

\begin{proof}
It is implied by the above argument and Corollary \ref{CorNumberofHyperQuotients}.
\end{proof}

As a toy example of classification of all quotients of a Boolean topos, using Corollary \ref{CorInternalparameterizationOfQuotientsOfBooleanTopos}, we classify all quotients of the topos of species.
\begin{example}[Quotients of the topos of species]
In Example \ref{exampleSpecies}, we see that a local state classifier of the topos of species $\Species$ is given by $\Xi = \SubGrp{\Aut{-}}$, which we call \dq{the species of symmetries.}

An internal filter of $\Xi$ is given by a family of filters
\[\{F_A \subset \SubGrp{\Aut{A}}\}_{A \in \ob{\FinSet_{0}}}\]
that is closed under the action of bijections. It is equivalent to saying that the family of filters \[\{F_n \subset \SubGrp{\symG{n}}\}_{n=0}^{\infty}\] that is closed under conjugate actions for each $0 \leq n < \infty$. Furthermore, since a filter of finite semilattice is principal, it corresponds to a family of normal subgroups 
\[
\{N_n \subset \symG{n}\}_{n=0}^{\infty}.
\]
Therefore, there is a natural bijection between quotients of the topos of species and families of normal subgroups $\{N_n \subset \symG{n}\}_{n=0}^{\infty}$ of all symmetric groups.

Classification of normal subgroups of symmetric groups is well-known. Since an alternating group $A_n$ of a symmetric group $\symG{n}$ is simple when $n \geq 5$, there exist exactly $3$ different normal subgroups for $\symG{n} (n \geq 5)$, namely two trivial subgroups and the alternating group. For the cases of $n=0, 1, 2, 3, 4$, by concrete calculation, we can prove that there are $1, 1, 2, 3, 4$ subgroups, respectively. (There is a non-trivial normal subgroup of $\symG{4}$ that differs from $A_4$, which is the Klein four-group.) In particular, the number of quotients of the topos of species is the cardinality of the
continuum.
\end{example}

See the next subsection for another example of the classification of all quotients of a Boolean topos via Corollary \ref{CorInternalparameterizationOfQuotientsOfBooleanTopos}.

\subsection{Topos of (topological) group actions}\label{ExampleofGroupTopos}
As mentioned in Example \ref{HQfromTopMonoid}, for a topological group $(G,\mathcal{\tau})$, the continuous action topos $\Cont{G}{\tau}$ gives a (hyperconnected) quotient of $\ps{G}$. In this subsection, we prove the converse statement (Corollary \ref{CorConnectedGroupAction}), as a corollary of our main theorem, although an essentially same argument is already known \cite[][section 5.3.1]{rogers2021toposesM}, for this particular example.

Just to avoid the argument becoming wordy, we introduce two terminologies.
\begin{definition}[Conjugate closed filter]\label{DefConjClosedFilter}
A filter $F$ of a (semi)lattice of subgroups $\SubGrp{G}$ is called \emph{conjugate closed} if it is closed under the conjugate action.
\end{definition}
\begin{definition}[Simple topological group]\label{DefSimpleTopGrp}
A topological group structure on a group $G$ is called \emph{simple} if the set of all open subgroups is a fundamental system of neighborhoods of the identity element.
\end{definition}

\begin{example}[Simple topological group and non-simple topological group]
The topological group of $p$-adic numbers $\Z_p$ with the usual $p$-adic topology is simple. In contrast to that, the topological group of real numbers $\R$ with the usual Euclidean topology is not simple.
\end{example}

In Example \ref{ExampleOfLSCOfGroupActionTopos}, we have seen that a local state classifier of a group action topos $\ps{G}$ is given by the set of all subgroups of $G$, equipped with morphisms $\xi_X$ that send an element to its stabilizer subgroup. Its internal filter is a conjugate closed filter $\F \subset \SubGrp{G}$. As a corollary of our main theorem, we have the following:
\begin{corollary}
For a group $G$, quotients of a group action topos $\ps{G}$ bijectively correspond to conjugate closed filters of $\SubGrp{G}$.
\end{corollary}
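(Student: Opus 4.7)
The plan is to derive this corollary by combining the main theorem (Theorem \ref{mainTheorem}) with the Booleanness of the topos $\ps{G}$ and the explicit description of the local state classifier given in Example \ref{ExampleOfLSCOfGroupActionTopos}. First, I would observe that $\ps{G}$ is a Boolean topos: any $G$-invariant subset of a right $G$-set $X$ is a union of orbits, so its set-theoretic complement is again $G$-invariant and provides a Boolean complement in the subobject lattice. Consequently, Proposition \ref{PropQuotientofBooleanTopos} applies, and every quotient of $\ps{G}$ is automatically hyperconnected.

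Next, since $\ps{G}$ is a Grothendieck topos, it admits a local state classifier, which by Example \ref{ExampleOfLSCOfGroupActionTopos} is $\Xi = \SubGrp{G}$ equipped with the right conjugate action $H \ast g = g^{-1}Hg$, the cocone component $\xi_X$ sending $x \in X$ to $\Stab_G(x)$. Applying the Boolean refinement of the main theorem, namely Corollary \ref{CorInternalparameterizationOfQuotientsOfBooleanTopos}, immediately gives a natural bijection between quotients of $\ps{G}$ and internal filters of $\Xi$.

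The remaining task, which I expect to be the main (though routine) obstacle, is to translate the notion of \emph{internal filter} of $\Xi$ in $\ps{G}$ into the purely set-theoretic notion of a \emph{conjugate closed filter} of $\SubGrp{G}$ in the sense of Definition \ref{DefConjClosedFilter}. Unwinding the internal semilattice structure of $\Xi$ via Example \ref{semilatticestrinPresheaf}: a co-subobject of the representable $\y{\ast}$ (i.e.\ of $G$ viewed as a right $G$-set) is a quotient $G \twoheadrightarrow H\backslash G$, parametrized by a subgroup $H \leq G$, and the internal meet on $\Xi$ corresponds to intersection of subgroups while the internal top is the full group $G$. Hence an internal filter of $\Xi$ is precisely a subset $F \subseteq \SubGrp{G}$ that (i) is a $G$-subobject of $\Xi$, meaning it is closed under the conjugate action, and (ii) forms a filter of $\SubGrp{G}$ in the usual sense (upward closed, closed under finite intersection, contains $G$). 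This is the definition of a conjugate closed filter, completing the chain of bijections
\[
\{\text{quotients of } \ps{G}\} \;\leftrightarrow\; \{\text{internal filters of }\Xi\} \;\leftrightarrow\; \{\text{conjugate closed filters of } \SubGrp{G}\}.
\]
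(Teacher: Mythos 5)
Your proposal is correct and follows essentially the same route as the paper: Booleanness of $\ps{G}$ plus Proposition \ref{PropQuotientofBooleanTopos} reduces quotients to hyperconnected quotients, the main theorem identifies these with internal filters of $\Xi = \SubGrp{G}$, and the internal filters are unwound as conjugate closed filters. The only difference is that you spell out the translation step (co-subobjects of $\y{\ast}$ as coset spaces $H\backslash G$, meet as intersection), which the paper asserts without proof in the paragraph preceding the corollary.
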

\begin{proof}
This is immediately corollary of our main theorem and Proposition \ref{PropQuotientofBooleanTopos}.
\end{proof}

To connect this corollary to the notion of a topological group, we prove the following lemma.

\begin{lemma}\label{LemmaTopStrAndFilters}
For a topological group $(G,\tau)$, the set of all open subgroups is a conjugate closed filter of $\SubGrp{G}$. Furthermore, for a group $G$, this construction gives a bijective correspondence between simple topological group structures on $G$ and conjugate closed filters of $\SubGrp{G}$.
\end{lemma}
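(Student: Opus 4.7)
My plan is to prove the two halves of the statement in sequence, then verify mutual inverseness.

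For the first half, given a topological group $(G,\tau)$, I would directly check the three filter axioms together with conjugate closedness for the collection of open subgroups: the whole group $G$ is open; the intersection of two open subgroups is an open subgroup; and a subgroup $K$ containing an open subgroup $H$ equals the union $\bigcup_{k\in K}kH$ of left cosets of $H$ and is therefore open. Conjugate closedness is immediate, since conjugation by any $g\in G$ is a self-homeomorphism of $G$, sending open subgroups to open subgroups. This half is routine.

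For the second half, given a conjugate closed filter $\F\subseteq \SubGrp{G}$, I would equip $G$ with the topology having as basis the family of cosets $\{gH \mid g\in G,\, H\in \F\}$; this is indeed a basis because $\F$ is closed under finite intersections. The main obstacle is to verify that this is a topological group topology. A priori the construction yields only a left-invariant topology, and the conjugate closedness of $\F$ is exactly the ingredient needed to bridge left and right cosets. Concretely, to prove continuity of inversion at $g$, given an open $V$ with $g^{-1}\in V$ and $H\in \F$ with $g^{-1}H\subseteq V$, I would take $K \defeq g^{-1}Hg\in \F$; then $gK$ is an open neighborhood of $g$ and $(gK)^{-1}=Kg^{-1}=g^{-1}H\subseteq V$. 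Joint continuity of multiplication is verified by an analogous calculation combining the filter property with conjugate closedness.

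It remains to show the two constructions are mutually inverse. Starting from $\F$, the open subgroups of the constructed topology are exactly $\F$: every $H\in \F$ is open since for each $h\in H$ the basic set $hH=H$ sits inside $H$; conversely any open subgroup $K$ contains some basic neighborhood $eH=H$ of $e$ with $H\in \F$, and belongs to $\F$ by upward closure. In particular the constructed topology is simple. Conversely, starting from a simple topological group structure and passing to its filter of open subgroups and back, the recovered topology agrees with the original because both are group topologies sharing the same neighborhood basis at the identity, and any group topology is determined by such a basis. Simplicity is essential here: without it, distinct topologies could produce the same filter of open subgroups, breaking injectivity of the correspondence.
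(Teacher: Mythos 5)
Your proposal is correct and follows essentially the same route as the paper: verify the filter axioms and conjugate closedness directly for open subgroups, build the candidate topology from the coset basis $\{gH \mid g\in G,\ H\in F\}$, use conjugate closedness to pass between left and right cosets when checking continuity of inversion and multiplication, and recover the filter via upward closure, with simplicity guaranteeing injectivity. The only cosmetic difference is that you check continuity of inversion pointwise at each $g$, whereas the paper observes globally that $(gH)^{-1}=Hg^{-1}$ is itself a basic open set; both hinge on the same identity $g^{-1}Hg\in F$.
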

\begin{proof}
For a topological group $(G,\tau)$, let $F_{G,\tau}$ denote the set of all open subgroups of $(G,\tau)$.

First, we prove the former statement. Take an arbitrary topological group $(G,\tau)$. Since the intersection of finitely many open subgroups (including the nullary intersection $G$) is an open subgroup, the set $F_{G,\tau}$ is closed under finite meet. If an open subgroup $H\in F_{G,\tau}$ is a subset of another subgroup $H' \in \SubGrp{G}$, $H'$ is a union of some cosets of $H$ and is open. Hence the filter $F_{G,\tau}$ is upward closed. Since a conjugate subgroup of an open subgroup is also open, the set $F_{G,\tau}$ is a conjugate-closed filter. The former statement is now proven.

Second, we prove the latter statement. If two topological group structure $(G, \tau)$ and $(G,\tau')$ on a group $G$ are both simple and $F_{G,\tau}= F_{G,\tau'}$, then two topologies coincide $\tau = \tau'$. Therefore, it is enough to construct, for an arbitrary given conjugate closed filter $F$, a simple topological group structure $(G,\tau)$ such that $F_{G,\tau}= F$. Take an arbitrary conjugate closed filter $F$. We define a set $B$ of subsets of $G$, as follows:
\[B\defeq \{gHg'\mid g,g' \in G, \ H \in F\}.\]
Then the set $B$ has a slightly simpler description:
\[B= \{gH\mid g\in G, \ H \in F\}=\{Hg\mid g\in G, \ H \in F\},\]
because $F$ is conjugate closed and we have an equation $gHg' = gg'(g'^{-1}Hg')= (gHg^{-1})gg'$.
Furthermore, if $gH \in B$ contains $x \in G$, we can prove that $gH=xH$, then $\{S\in B\mid x\in S \}= \{xH\mid H\in F\}$.

We define $\tau$ as a topology on $G$ that is generated by open basis $B$. In other words, an open set of $\tau$ is a union of some elements of $F$. But, this construction needs verification. First, since $G\in F$, $F$ covers whole $G$. Second, take $x \in G$ and $gH,g'H' \in B$ such that $x \in gH$ and $x \in g'H'$. Since
\[
x \in gH \cap g'H' = xH \cap xH' = x(H\cap H')\in B
\]
and $x(H\cap H') \subset gH,g'H'$, the verification is done. Let $\tau$ denote the topology we have just verified. Notice that $\{xH\mid H\in F\}$ is a fundamental system of neighborhoods of $x\in G$.

We prove that $(G,\tau)$ is a topological group, is simple, and $F_{G,\tau}=F$.

Since $(gH)^{-1}=Hg^{-1}$, inverse element function $(-)^{-1}\colon G \to G$ is continuous. To prove the multiplication map $\ast\colon G\times G \to G$ is continuous, take $x,y\in G$ and $xyH \in B$. $x(yHy^{-1})$ and $yH$ are open neighborhoods of $x,y$ and $x(yHy^{-1}) \times yH \subset G\times G$ is sent by $\ast$ to $xyH\subset G$. Then, $\ast$ is continuous and $(G,\tau)$ is a topological group.

Now, it is enough to prove the equation $F_{G,\tau}=F$, because the simpleness of the topological group $(G,\tau)$ is immediately implied by this equation. Since every element of $F$ is an open subgroup of $(G,\tau)$ (i.e., $F_{G,\tau}\supset F$), we prove the converse inclusion relation, $F_{G,\tau}\subset F$. Take an arbitrary open subgroup $U\in F_{G,\tau}$. Since $U$ is open and $F$ is a fundamental system of neighborhoods of the identity element, there exists $H \in F$ such that $H \subset U$. Because $F$ is upward closed, the open subgroup $U$ also belongs to $F$, and thus our proof is completed.
\end{proof}

By the above argument, we have the next corollary.
\begin{corollary}[Quotients of a group action topos]\label{CorConnectedGroupAction}
For a group $G$, the following three concepts correspond bijectively:
\begin{enumerate}
    \item (hyper)connected quotients of the topos of group action $\ps{G}$,
    \item conjugate closed filters (Definition \ref{DefConjClosedFilter}) of $\SubGrp{G}$,
    \item simple topological group structure (Definition \ref{DefSimpleTopGrp}) on $G$.
\end{enumerate}
In particular, every quotient of the topos of group actions is induced by a (simple) topological group structure on $G$.
\end{corollary}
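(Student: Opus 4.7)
The plan is to obtain Corollary \ref{CorConnectedGroupAction} as a direct composition of two bijections already established in the surrounding text. For the equivalence (1) $\Leftrightarrow$ (2), I would invoke the corollary proved immediately above, which itself combines the main theorem (Theorem \ref{mainTheorem}), the computation $\Xi = \SubGrp{G}$ of the local state classifier of $\ps{G}$ from Example \ref{ExampleOfLSCOfGroupActionTopos}, and Proposition \ref{PropQuotientofBooleanTopos} (every quotient of the Boolean topos $\ps{G}$ is automatically hyperconnected, so the two adjectives in ``(hyper)connected'' collapse). Unwinding definitions, an internal filter of $\Xi$ in $\ps{G}$ is precisely a subset of $\SubGrp{G}$ that is a filter in the ordinary sense (upward closed and closed under binary intersection of subgroups) and is moreover closed under the $G$-action, i.e.\ a conjugate closed filter in the sense of Definition \ref{DefConjClosedFilter}.

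The equivalence (2) $\Leftrightarrow$ (3) is already packaged as Lemma \ref{LemmaTopStrAndFilters}: the assignment $(G,\tau) \mapsto \{H \leq G \mid H \text{ open in }\tau\}$ is a bijection between simple topological group structures on $G$ and conjugate closed filters of $\SubGrp{G}$. Composing the two bijections yields the desired triple correspondence. For the final ``in particular'' clause, it remains to identify the hyperconnected quotient associated to a conjugate closed filter $F$ with the continuous action topos $\Cont{G}{\tau}$ of the corresponding simple topological group structure. By the construction in the proof of Theorem \ref{mainTheorem}, the quotient associated to $F$ consists of those $G$-sets $X$ whose structure morphism $\xi_X\colon X \to \SubGrp{G}$, sending $x \mapsto \Stab_G(x)$, factors through $F$ -- equivalently, those $G$-sets all of whose stabilizers lie in $F$. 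On the other hand, a $G$-set is continuous for $(G,\tau)$ exactly when all stabilizers are open, i.e.\ belong to $F_{G,\tau} = F$. The two full subcategories of $\ps{G}$ therefore coincide.

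There is no real obstacle here, since all the substantial work has already been done: the main theorem handles the categorical content, Example \ref{ExampleOfLSCOfGroupActionTopos} identifies $\Xi$, Proposition \ref{PropQuotientofBooleanTopos} converts ``hyperconnected quotient'' into ``quotient,'' and Lemma \ref{LemmaTopStrAndFilters} carries the topological content. The only bookkeeping needed is the verification that internal filters of $\SubGrp{G}$ in $\ps{G}$ (in the sense of Appendix \ref{AppendixInternalSemilattice}) match conjugate closed filters in the elementary combinatorial sense -- routine because the internal semilattice structure on $\Xi$ is given componentwise by intersection of subgroups (cf.\ Example \ref{semilatticestrinPresheaf}) and a $G$-subobject of $\SubGrp{G}$ is exactly a conjugation-stable subset.
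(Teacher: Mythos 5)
Your proposal is correct and follows essentially the same route as the paper: the paper's proof likewise obtains the triple correspondence by composing the preceding corollary (main theorem plus Proposition \ref{PropQuotientofBooleanTopos} and Example \ref{ExampleOfLSCOfGroupActionTopos}) with Lemma \ref{LemmaTopStrAndFilters}, and verifies the ``in particular'' clause by concretely checking the correspondence. Your explicit identification of the quotient attached to $F$ with $\Cont{G}{\tau}$ via ``all stabilizers open'' is exactly the intended concrete check.
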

\begin{proof}
The former statement is implied by the above arguments. The latter is verified by concretely checking the correspondence. 
\end{proof}

We end this subsection by mentioning two possible generalizations from groups to topological groups and to monoids.
\begin{remark}[Quotients of a topological group action topos]
How about quotients of a topos of topological group actions, which is also a typical example of a Boolean Grothendieck topos? Actually, we have already essentially completed the classification of all quotients of a topological group action topos. It is because, a topological group action topos is a quotient of a (discrete) group action topos, and  a quotient of a quotient is itself a quotient. 

Specifically, consider the topos $\Cont{G}{\tau}$ of continuous actions of the topological group $(G,\tau)$. Since this topos is also a Boolean Grothendieck topos, we can apply Proposition \ref{PropQuotientofBooleanTopos}. Then, by Lemma \ref{LemmaTopStrAndFilters}, we can assume that it is simple. Then we can conclude that a quotient of $\Cont{G}{\tau}$ corresponds bijectively to a simple topological group structure equal to or weaker than $\tau$. 
From this generalized point of view, we can regard Corollary \ref{CorConnectedGroupAction} as the particular case where $\tau$ is the discrete topology.
\end{remark}

\begin{remark}[Hyperconnected quotients of a topos of monoid actions]
In this remark, we briefly mention the extent to which the discussion in this subsection can be generalized from groups to monoids.

There are at least two differences. First, a monoid action topos is not necessarily Boolean, so we cannot apply Proposition \ref{PropQuotientofBooleanTopos}. In other words, some quotients may not be hyperconnected. The other difference is that a local state classifier $\Xi$ is not the set of all subalgebras (in this case, submonoids), unlike the case of a group. A local state classifier, calculated by Example \ref{LSCofPresheafTopos}, is the set of all equivalence relations on $M$ that are compatible with the right action of $M$ on $M$ itself. In other words, it is the set of \emph{right congruences} of $M$.

In \cite[][section 5.3.1]{rogers2021toposesM}, the classification of hyperconnected quotients of the topos of monoid actions is given in terms of right congruences, in essentially the same way that our main theorem provides for this particular case. Furthermore, in the same paper, a hyperconnected quotient of a topos of monoid actions that is not induced by the topological monoid structure is given, using the additive monoid of natural numbers $(\mathbb{N},+,0)$.
\end{remark}


\section{Conclusions and future works}\label{secConclusion}
In this paper, we defined the notion of a local state classifier (Definition \ref{DefLocalStateClassifier}). By making use of it, we obtained the internal parameterization of hyperconnected quotients (Theorem \ref{mainTheorem}). It allows us to classify all hyperconnected quotients by calculating one object, a local state classifier. After proving the main theorem, we demonstrated the classifications for some familiar toposes. As a corollary of our main theorem, we gave a partial solution to Lawvere's open problem \cite{OpenLawvere}. This is a novel step towards the solution of the open problem, especially from the perspective of internal parameterization. 

However, there are still many remaining things to do. We list some of them, including some that are vague, as future works.

First, seek the possible generalization of our internal parameterization to all quotients, not limited to hyperconnected quotients, to solve Lawvere's first open problem \cite{OpenLawvere}. However, it seems impossible to realize such a generalization by manipulating only a local state classifier. It is because a local state classifier seems too small to parameterize all quotients. For example, in the case of $\DirGraph$, its local state classifier and subobject classifier are both finite graphs. However, the number of quotients of $\DirGraph \simeq \ps{\Par}$ is at least the cardinality of the continuum. (This lower bound is given by the fact that $\ps{\Z}$ is realized as a quotient of $\DirGraph$. It is because there is a lax epimorphism $\Par \to \Z$ from the parallel morphism category $\Par$ (Example \ref{ExampleLSCofDirectedGraph}) to the group $\Z$ in the $2$-category of categories $\mathrm{Cat}$. See \cite{adamek2001functors} or \cite{el2002simultaneously} for lax epimorphism.) 
Therefore, an essential modification may be needed if one seeks a similar internal parameterization of all quotients.

Second, find other applications of a local state classifier in other contexts of other categories, not limited to toposes. Although the definition of a local state classifier does make sense in other categories, we do not yet know other applications of a local state classifier. One of the obstructions is that a local state classifier tends to be the terminal object (or degenerate) in some familiar categories, as we see in Example \ref{ExampleTerminalLSC} and Example \ref{ExampleexistenceofLSCofEquational}. To avoid this problem, some variances of a local state classifier might work. There are many options to define variances of a local state classifier. For example, we can define a colimit of all regular or split monomorphisms instead of all monomorphisms or define algebraic structure on a (variance of) local state classifier $\Xi$ by a monoidal structure, not only by a cartesian structure as done in subsection \ref{subsectionSemilatticeStructure}.

Third, find a class of functors that preserves a local state classifier. One fundamental method to study categorical structures is considering the preservation of the structure by functors. However, we do not find a suitable class of functors that preserves a local state classifier. Though we define a local state classifier as a colimit, even a cocontinuous functor may not preserve it since the indexing category of the diagram $\mo{\C}$ is possibly large and depends on the considered category $\C$. As seen in Remark \ref{RemarkSheafificarionandLSC}, even a sheafification functor (which is an essentially surjective left exact left adjoint functor) may not preserve it.

Fourth, clarify the relationship to classifications of smaller classes of quotients. Some classifications of smaller class quotients are known, including the classification of atomic quotients mentioned in Example \ref{ExampleAtomicConnected}. We want to clarify how a restricted class of hyperconnected quotients can be described in terms of a local state classifier.

Lastly, study the interaction with other internal structures. One crucial point of this paper is to find the internal structures that correspond to the external structures, hyperconnected quotients. Once we succeed in internalizing, it is natural to consider the relationship with other internal structures. For example, since a Lawvere-Tierney topology is an idempotent internal semilattice homomorphism on the subobject classifier $\Omega$, from an internal semilattice homomorphism $\Xi \to \Omega$, we can obtain a new homomorphism $\Xi \to \Omega$ just by composition. In terms of corresponding external structures, if we have a subtopos and a hyperconnected quotient of a given topos (with a local state classifier), we can obtain a new hyperconnected quotient. This fact is not apparent without our internal parameterization. More generally, the monoid of all internal semilattice endo-homomorphisms $\Omega\to \Omega$ (which is called \emph{productive weak Lawvere-Tierney topology} in \cite{khanjanzadeh2021weak}) naturally acts on the set of all hyperconnected quotients.

\section*{Acknowledgements}
The author would like to thank Ryu Hasegawa and Hisashi Aratake for their suggestions and support, and Yuta Yamamoto and Haruya Minoura for their useful discussions. This research was supported by Forefront Physics and Mathematics Program to Drive Transformation (FoPM), a World-leading Innovative Graduate Study (WINGS) Program, the University of Tokyo.
\appendix
\section{Internal semilattices and the universal filter}\label{AppendixInternalSemilattice}
In this appendix, we briefly recall the notions of internal semilattices (Definition \ref{DefOfBSL}) and their filters (Definition \ref{InternalFilter}) and prove that the subobject classifier of a topos is universal among internal filters (Proposition \ref{PropUniversalityOfSubobjectClassifierAsSemilattice}).

First, we define the equational theory of (bounded meet-)semilattice, instead of usual semilattices (in $\Set$), since we interpret it in other cartesian categories (like toposes). For internal interpretation of an equational theory, see \cite[][section IV.8.]{maclane2012sheaves}. 

\begin{definition}[Equational theory of semilattice]\label{DefOfBSL}
The equational theory of \emph{semilattice} consists of two operations $\top, \meet$, which have arity $0,2$ respectively, and four axioms
\begin{itemize}
    \item $x\meet \top = \top \meet x=x$
    \item $(x\meet y)\meet z=x\meet (y\meet z)$
    \item $x\meet y=y\meet x$
    \item $x\meet x = x$.
\end{itemize}
\end{definition}

The theory of semilattice is the same thing as the theory of idempotent commutative monoid. However, in this paper, we prefer calling it a semilattice in order to emphasize its order structure.

Internal filters of an internal semilattice can be defined as a \dq{upward closed} subalgebra.
\begin{definition}[Internal filter]\label{InternalFilter}
Let $\C$ be a finitely complete category and $(X,\top,\meet)$ be an internal semilattice. \emph{An internal filter} of $(X,\top,\meet)$ is a subobject $m\colon F\subob X$ that satisfies the following three conditions.
\begin{enumerate}
    \item (closed under $\top$) There is a (necessarily unique) morphism $\top_{F}\colon \1\to F$ such that 
    \[
    \begin{tikzcd}
    \1\ar[r,dotted,"\top_{F}"]\ar[d,"\id{\1}"]&F\ar[d,\mono, "m"]\\
    \1\ar[r,"\top"]&X\\
    \end{tikzcd}
    \]
    commutes.
    
    \item (closed under $\meet$) There is a (necessarily unique) morphism $\meet_{F}\colon F\times F\to F$ such that 
    \[
    \begin{tikzcd}
    F\times F\ar[r,dotted,"\meet_{F}"]\ar[d,\mono,"m\times m"]&F\ar[d,\mono,"m"]\\
    X\times X\ar[r,"\meet"]&X\\
    \end{tikzcd}
    \]
    commutes.
    
    \item (upward closed) For the equalizer $k\colon \Eq \subob X$ of
    \[
    \begin{tikzcd}[row sep=tiny]
        &           &X\times X\ar[rd,"\meet"]&\\
    \Eq \ar[r,"k",\mono]&F\times X\ar[ru,"m\times \id{X}",\mono]\ar[rd,"\mathrm{pr}_1"']&&X\\
        &           &F\ar[ru,"m"',\mono]&,
    \end{tikzcd}
    \] there is a (necessarily unique) morphism $l\colon \Eq\to F\times F$ such that
    \[
    \begin{tikzcd}
    \Eq\ar[r,"k",\mono]\ar[d,dotted,"l"']&F\times X\\
    F\times F\ar[ru,"\id{F}\times m"',\mono]&
    \end{tikzcd}
    \]
    commutes.
\end{enumerate}
\end{definition}
The third condition above is just a diagrammatic version of 
\[\forall (u,x)\in F\times X, (u\meet x=u \implies (u,x) \in F\times F),\]
which means that $F$ is upward closed.

\begin{example}[$\Set$]
In the category of sets $\Set$, an internal semilattice is a usual semilattice. It is the same as a poset with finite meet, including the maximum element ($=$ the nullary meet). An internal filter is a filter in the usual sense, which is an upward closed subset that is closed under finite meet. Note that the existence of the nullary meet ensures that a filter is non-empty.
\end{example}

\begin{example}[Presheaf topos]
In a presheaf topos $\ps{\C}$ of a small category $\C$, an internal semilattice structure on a presheaf $P\colon \C^{\op}\to \Set$ is a family of semilattice structures on $Pc$ for each $c\in \ob{\C}$ such that $Pf\colon Pc'\to Pc$ for each $f\colon c\to c'$ is a semilattice homomorphism. An internal filter of $P$ is a subpresheaf $F\subob P $ such that $Fc$ is a filter of $Pc$ in the usual sense for each $c\in \ob{C}$.
\end{example}

Like the case of a presheaf topos, if $\C$ is locally small, an internal semilattice and an internal filter can be described \emph{externally} with the Yoneda lemma and reduced to the theory of usual semilattices and filters. 

\begin{example}[Subobject classifier $\Omega$]
Recall that a subobject classifier $\Omega$ in an elementary topos $\E$ has the canonical internal Heyting algebra structure (\parencite[see][IV.8. Theorem 1]{maclane2012sheaves}). By restricting that structure, $\Omega$ has the canonical internal semilattice structure. It is familiar in the context of Lawvere-Tierney topology to regard $\Omega$ as an internal semilattice rather than internal Heyting algebra. In fact, a Lawvere-Tierney topology is an idempotent internal semilattice endomorphism of $\Omega$, not an internal Heyting algebra endomorphism. The (universal) subobject \[\true\colon \1\subob \Omega\] is an internal filter of $\Omega$.
\end{example}

From now, we prove that for an arbitrary elementary topos $\E$, the subobject classifier $\true\colon \1\subob \Omega$ is the universal internal filter. To prove that, we first prepare the following two lemmas.

\begin{lemma}\label{pboffilt}
Let $\C$ be a finitely complete category, $X$ be an internal semilattice, and $m\colon F\subob X$ be its internal filter. Then, the following two diagram 
    \[
    \begin{tikzcd}
    \1\ar[r,"\top_{F}"]\ar[d,"\id{\1}"]&F\ar[d,\mono, "m"]\\
    \1\ar[r,"\top"]&X\\
    \end{tikzcd}
    \]
    and
    \[
    \begin{tikzcd}
    F\times F\ar[r,"\meet_{F}"]\ar[d,\mono,"m\times m"]&F\ar[d,\mono,"m"]\\
    X\times X\ar[r,"\meet"]&X,
    \end{tikzcd}
    \]
    which are in the Definition \ref{InternalFilter}, are both pullback diagrams.
\end{lemma}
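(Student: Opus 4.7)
For the first square, the strategy is to verify the pullback universal property directly, exploiting the fact that $m$ is monic. Given any $T$ with morphisms $a\colon T \to \1$ (which is forced to be the terminal map $!_T$) and $b\colon T \to F$ satisfying $m \circ b = \top \circ a$, I need to produce a unique $g\colon T \to \1$ through which $a$ and $b$ factor. Uniqueness of $g = !_T$ is automatic, and the factorization $b = \top_F \circ !_T$ can be checked by post-composing with the monomorphism $m$: the defining equation $m \circ \top_F = \top$ together with $m \circ b = \top \circ !_T$ gives $m \circ b = m \circ \top_F \circ !_T$, and then monicness of $m$ concludes.

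For the second square, the plan is to use the upward-closed condition (condition 3 of Definition \ref{InternalFilter}) to lift both components of an arbitrary element of the pullback into $F$. Suppose $T$ is equipped with $a\colon T \to F$ and $b = \langle b_1, b_2\rangle\colon T \to X \times X$ satisfying $\meet \circ b = m \circ a$. Since $m \times m$ is monic (products of monomorphisms are monomorphisms), uniqueness of a lift $c\colon T \to F \times F$ is automatic, so the content lies in existence; and by the universal property of $F \times F$ this reduces to showing each $b_i$ factors through $m\colon F \subob X$. To obtain such a factorization for $b_1$, I consider the morphism $\langle a, b_1\rangle \colon T \to F \times X$ and show it factors through the equalizer $k\colon \Eq \subob F \times X$ from Definition \ref{InternalFilter}. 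This amounts to the equation $m(a) \meet b_1 = m(a)$ in $X$, which follows by substituting $m \circ a = \meet \circ \langle b_1, b_2\rangle = b_1 \meet b_2$ and then applying associativity, commutativity, and idempotence of the semilattice operation: $(b_1 \meet b_2) \meet b_1 = b_1 \meet b_2$. Symmetric reasoning handles $b_2$. Once $\Eq$ absorbs $\langle a, b_1\rangle$, composing with the supplied $l\colon \Eq \to F \times F$ and then with $\id{F} \times m$ reproduces $\langle a, b_1\rangle$, so $b_1$ factors through $m$. Having built the candidate $c = \langle c_1, c_2\rangle$, the remaining identity $\meet_F \circ c = a$ follows again by monicness of $m$: the commutativity $m \circ \meet_F = \meet \circ (m \times m)$ yields $m \circ \meet_F \circ c = \meet \circ b = m \circ a$.

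I expect the first square to be essentially routine. The potential pitfall is in the second square, where one might be tempted to use only conditions 1 and 2 of Definition \ref{InternalFilter}; the point is that closure under $\meet$ alone is not enough, and the upward-closedness condition is indispensable for lifting the individual factors. The small computational step that makes everything tick is recognizing that $m \circ a$ equals $b_1 \meet b_2$ and hence sits below each $b_i$ in the internal order, so that upward-closedness promotes $b_i$ into $F$.
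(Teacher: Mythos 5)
Your proposal is correct and follows essentially the same route as the paper: the first square reduces to the monicness of $m$, and the second is exactly the generalized-element rendering of the upward-closedness condition (the paper states the second square as the diagrammatic version of $x\meet x'\in F \implies x\in F \text{ and } x'\in F$ and verifies it in $\Set$ via Yoneda, explicitly noting that a generalized-element argument also works). Your write-up just carries out that diagram chase in full, via the computation $(b_1\meet b_2)\meet b_i = b_1\meet b_2$ and the equalizer $\Eq$, which has the minor advantage of avoiding the local-smallness case split.
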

\begin{proof}
The first diagram can be divided into two elementary pullback diagrams:
\[
\begin{tikzcd}
\1\ar[d,"\id{\1}"]\ar[r,"\top_{F}"]\arrow[dr, phantom,""{pullback}, very near start]
&F\ar[d,"\id{F}"]\ar[r,"\id{F}"]\arrow[dr, phantom,""{pullback}, very near start]
&F\ar[d,\mono,"m"]\\
\1\ar[r,"\top_{F}"]\ar[rr,bend right,"\top"']&
F\ar[r,"m",\mono]&X.
\end{tikzcd}
\]

The statement that the second diagram is pullback is the diagrammatic version of the formula
\[\forall (x,x')\in X\times X,\  x\meet x' \in F \implies (x\in F \ \text{and}\  x'\in F).\] If $\C$ is locally small, by the Yoneda lemma, it is enough to show this in $\Set$. In $\Set$, the desired statement can be easily verified by the third condition of internal filters, upward closed. Even if $\C$ is not locally small, this can be proved in a straightforward diagram chase or the argument of generalized elements.
\end{proof}

\begin{lemma}[Pullback-stability of filters]\label{PullbackStabilityOfFilters}
Let $\C$ be a finitely complete category, $X,Y$ be internal semilattices in $\C$, $f\colon X\to Y$ be an internal semilattice homomorphism and $m\colon F\subob Y$ be an internal filter of $Y$. Then the pullback $f^{\ast}m\colon f^{*}F\subob X$ of $m\colon F\subob X$ along $f$
\[
\begin{tikzcd}
f^{*}F \ar[r]\ar[d,\mono,"f^{\ast}m"']\arrow[dr, phantom,""{pullback}, very near start]&F\ar[d,"m",\mono]\\
X\ar[r,"f"]&Y\\
\end{tikzcd}
\]
is an internal filter of $X$.
\end{lemma}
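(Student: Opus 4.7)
The plan is to verify the three conditions of Definition \ref{InternalFilter} for the pullback $f^{\ast}m \colon f^{\ast}F \subob X$, using Lemma \ref{pboffilt} to turn the closure properties of $F \subob Y$ into pullback squares, together with the fact that $f$ preserves $\top$ and $\meet$. Let $g \colon f^{\ast}F \to F$ denote the other leg of the defining pullback, so that $m \circ g = f \circ f^{\ast}m$, and let me tackle the three conditions in the order in which they appear.

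First I would handle closure under $\top$. Since $f$ preserves $\top$, we have $f \circ \top_{X} = \top_{Y} = m \circ \top_{F}$, so the universal property of the pullback yields a unique $\top_{f^{\ast}F} \colon \1 \to f^{\ast}F$ with $f^{\ast}m \circ \top_{f^{\ast}F} = \top_{X}$, giving condition (1). Next, for closure under $\meet$, I would invoke the pullback property of $f^{\ast}F$ on the pair $\meet_{X} \circ (f^{\ast}m \times f^{\ast}m) \colon f^{\ast}F \times f^{\ast}F \to X$ and $\meet_{F} \circ (g \times g) \colon f^{\ast}F \times f^{\ast}F \to F$. Their compatibility
\[
m \circ \meet_{F} \circ (g \times g) = \meet_{Y} \circ (m \times m) \circ (g \times g) = \meet_{Y} \circ (f \times f) \circ (f^{\ast}m \times f^{\ast}m) = f \circ \meet_{X} \circ (f^{\ast}m \times f^{\ast}m)
\]
uses Lemma \ref{pboffilt} (the closure-under-$\meet$ square for $F \subob Y$ in particular commutes), the identity $mg = f \circ f^{\ast}m$, and preservation of $\meet$ by $f$. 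The induced morphism $\meet_{f^{\ast}F} \colon f^{\ast}F \times f^{\ast}F \to f^{\ast}F$ then witnesses condition (2).

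For the third condition (upward closedness), which I expect to be the main obstacle, let $k' = \langle k_1', k_2' \rangle \colon \Eq' \subob f^{\ast}F \times X$ be the equalizer defining the condition for $f^{\ast}F \subob X$, and let $l \colon \Eq \to F \times F$ with $(\id{F} \times m) \circ l = k$ be the lift witnessing upward closure of $F \subob Y$. Applying $f$ to the equalizer identity $\meet_{X} \circ \langle f^{\ast}m \circ k_1', k_2' \rangle = f^{\ast}m \circ k_1'$, and using preservation of $\meet$ together with $f \circ f^{\ast}m = m \circ g$, one finds that $\langle g \circ k_1', f \circ k_2' \rangle \colon \Eq' \to F \times Y$ satisfies the equation defining $\Eq$, so factors through $\Eq$ via a unique $\phi \colon \Eq' \to \Eq$. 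Writing $l \circ \phi = \langle a, b \rangle$, the relation $(\id{F} \times m) \circ l = k$ forces $a = g \circ k_1'$ and $m \circ b = f \circ k_2'$. The pair $(b, k_2')$ is thus compatible with the pullback defining $f^{\ast}F$, yielding a unique $c \colon \Eq' \to f^{\ast}F$ with $g \circ c = b$ and $f^{\ast}m \circ c = k_2'$. Setting $l' \coloneqq \langle k_1', c \rangle$ then gives
\[
(\id{f^{\ast}F} \times f^{\ast}m) \circ l' = \langle k_1', f^{\ast}m \circ c \rangle = \langle k_1', k_2' \rangle = k',
\]
as required. The chief difficulty is juggling the two nested pullback universal properties --- one producing $\phi$ into $\Eq$, the other producing $c$ into $f^{\ast}F$ --- and keeping track of which commutativities need to be checked at each stage, but no essentially new idea is needed beyond Lemma \ref{pboffilt} and the hypothesis that $f$ is a semilattice homomorphism.
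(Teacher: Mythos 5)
Your proof is correct, but it takes a different route from the paper. The paper's argument is a Yoneda reduction: for locally small $\C$ it transports the statement to $\Set$ via the representable functors, where it becomes the familiar fact that the preimage of a filter under a semilattice homomorphism is a filter; the general (non-locally-small) case is merely asserted to follow from ``a straightforward diagram chase.'' What you have written is, in effect, that diagram chase carried out in full: you verify the three conditions of Definition \ref{InternalFilter} directly from the universal properties, with the only genuinely delicate step being the upward-closedness, where your two-stage factorization (first through $\Eq$ via $\phi$, then through the pullback $f^{\ast}F$ via $c$) is exactly right --- applying $f$ to the equalizer identity for $\Eq'$ and using preservation of $\meet$ does land you in the equalizer $\Eq$ for $F\subob Y$, and the identities $a = g\circ k_1'$ and $m\circ b = f\circ k_2'$ extracted from $(\id{F}\times m)\circ l\circ\phi = k\circ\phi$ are precisely what is needed to build $c$ and hence $l'$. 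Your approach buys generality and self-containedness (no local smallness, no external reduction), at the cost of length; the paper's buys brevity at the cost of deferring the general case. One small remark: you never actually need Lemma \ref{pboffilt} --- conditions (1) and (2) for $f^{\ast}F$ only use the \emph{commutativity} of the closure squares for $F$, which is part of Definition \ref{InternalFilter} itself, not the fact that they are pullbacks.
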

\begin{proof}
If $\C$ is locally small, by using the Yoneda lemma, it is enough to show this lemma only for $\C=\Set$. In $\Set$, an inverse image of a filter via a semilattice homomorphism is again a filter. For the case that $\C$ is not locally small, this lemma can also be easily proved by a straightforward diagram chase.
\end{proof}

We move on to the proof of the universal property of the subobject classifier as an internal filter.
\begin{proposition}[Subobject classifier is the universal internal filter]\label{PropUniversalityOfSubobjectClassifierAsSemilattice}
Let $\E$ be a topos, $\true\colon \1\to\Omega$ be its subobject classifier, $X$ be an internal semilattice, and $m\colon F\subob X$ be a subobject of $X$. Then, $m\colon F\subob X$ is an internal filter if and only if the characteristic map $\ch{F}\colon X\to \Omega$ is an internal semilattice homomorphism.
\[
\begin{tikzcd}
F \ar[r]\ar[d,\mono,"m"']\arrow[dr, phantom,""{pullback}, very near start]&\1\ar[d,"\true",\mono]\\
X\ar[r,"\ch{F}"]&\Omega
\end{tikzcd}
\]
Consequently, there is a bijective correspondence between internal semilattice homomorphisms $X\to \Omega$ and internal filters of $X$.
\end{proposition}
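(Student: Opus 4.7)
The plan is to prove the two directions separately, using the two preparatory lemmas directly. Throughout, the key observation is that $\true\colon\1\subob\Omega$ is itself an internal filter of $\Omega$ (as noted right before the proposition), and that characteristic maps can be read off from pullback squares against $\true$.

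\textbf{Sufficiency.} Suppose $\chi_F\colon X\to\Omega$ is an internal semilattice homomorphism. Since $\true\colon\1\subob\Omega$ is an internal filter of $\Omega$, Lemma \ref{PullbackStabilityOfFilters} applied to $\chi_F$ shows that the pullback of $\true$ along $\chi_F$, which is precisely $m\colon F\subob X$ by definition of the characteristic map, is an internal filter of $X$. This direction is immediate once we notice the pullback-stability lemma is tailored exactly for this step.

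\textbf{Necessity.} Suppose $m\colon F\subob X$ is an internal filter. We must check that $\chi_F$ preserves $\top$ and $\meet$. For the $\top$-axiom $\chi_F\circ\top=\true$: by Lemma \ref{pboffilt} applied to the filter $m$, the square involving $\top_F$ is a pullback, so pasting it against the defining pullback square of $\chi_F$ exhibits $\1\xrightarrow{\id{\1}}\1$ as the pullback of $\true$ along $\chi_F\circ\top$; uniqueness of characteristic maps then gives $\chi_F\circ\top=\true$, which is exactly preservation of $\top$. For the $\meet$-axiom $\chi_F\circ\meet_X=\meet_\Omega\circ(\chi_F\times\chi_F)$: both sides are characteristic maps $X\times X\to\Omega$, so it suffices to show they classify the same subobject of $X\times X$. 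On one hand, $\chi_F\circ\meet_X$ classifies the pullback of $m$ along $\meet_X$, which by the second part of Lemma \ref{pboffilt} is $m\times m\colon F\times F\subob X\times X$. On the other hand, $\meet_\Omega\circ(\chi_F\times\chi_F)=\meet_\Omega\circ\langle\chi_F\circ\mathrm{pr}_1,\chi_F\circ\mathrm{pr}_2\rangle$ classifies the intersection of the subobjects classified by $\chi_F\circ\mathrm{pr}_1$ and $\chi_F\circ\mathrm{pr}_2$, namely $(F\times X)\cap(X\times F)=F\times F$ inside $X\times X$. The two subobjects coincide, so the characteristic maps are equal.

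\textbf{Conclusion.} Combining the two directions yields the claimed bijection: sending an internal filter $m\colon F\subob X$ to its characteristic map $\chi_F$, and inversely sending an internal semilattice homomorphism $f\colon X\to\Omega$ to the pullback of $\true$ along $f$. These are mutually inverse by the universal property of $\Omega$. The only step that requires real care is the $\meet$-part of the necessity direction, where one must justify that the pullback of $m\colon F\subob X$ along $\meet_X$ really is $m\times m$; this is exactly the content of Lemma \ref{pboffilt}, which is why that lemma was set up in advance.
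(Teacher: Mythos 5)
Your proposal is correct and follows essentially the same route as the paper: sufficiency via Lemma \ref{PullbackStabilityOfFilters} applied to the filter $\true\colon\1\subob\Omega$, and necessity by pasting the pullback squares supplied by Lemma \ref{pboffilt} against the defining pullback of $\ch{F}$. The only cosmetic difference is in the $\meet$-step, where you invoke the standard fact that $\meet_\Omega$ classifies intersections of subobjects, while the paper obtains the same conclusion by explicitly composing the square $(\true\times\true,\true,\meet_\Omega)$ — itself an instance of Lemma \ref{pboffilt} for the filter $\true\colon\1\subob\Omega$ — with the product of two copies of the defining pullback of $\ch{F}$; these are the same argument phrased differently.
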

\begin{proof}
By Lemma \ref{PullbackStabilityOfFilters}, it is enough to prove that $\ch{F}$ is an internal semilattice homomorphism for any internal filter $m\colon F\subob X$.

First, we show that $\ch{F}$ preserves $\top$, i.e.,
\[
\begin{tikzcd}[row sep =tiny]
&X\ar[dd,"\ch{F}"]\\
\1\ar[ru,"\top"]\ar[rd,"\true"']&\\
&\Omega\\
\end{tikzcd}
\]
commutes. To prove this, it is enough to observe that the corresponding subobject of $\ch{F}\circ \top$ is equal to $\id{\1}\colon \1\subob \1$. 
Since $m\colon F\subob X$ is an internal filter, by Lemma \ref{pboffilt} and the definition of $\ch{F}$, we have the following pullback diagram
\[
\begin{tikzcd}
\1\ar[d,"\id{\1}"]\ar[r,"\top_{F}"]\arrow[dr, phantom,""{pullback}, very near start]
&F\ar[d,\mono,"m"]\ar[r,"\excl"]\arrow[dr, phantom,""{pullback}, very near start]
&\1\ar[d,\mono,"\true"] \\
\1\ar[r,"\top"']&
X\ar[r,"\ch{F}"]&
\Omega.
\end{tikzcd}
\]
The composite pullback diagram is what we needed.

Second, we show that $\ch{F}$ preserves $\meet$, i.e.,
\[
\begin{tikzcd}
X\times X\ar[d,"\ch{F}\times \ch{F}"]\ar[r,"\meet"]&X\ar[d,"\ch{F}"]\\
\Omega \times \Omega \ar[r,"\meet"]& \Omega
\end{tikzcd}
\]
commutes. Again by Lemma \ref{pboffilt}, we obtain the following two pullback diagrams:
\[
\begin{tikzcd}
F\times F\ar[r,"\excl\times \excl"]\ar[d,"m\times m",\mono]\arrow[dr, phantom,""{pullback}, very near start]&
\1\times \1\ar[r,"\meet_{\1}(=\excl)"]\ar[d,"\true\times \true",\mono]\arrow[dr, phantom,""{pullback}, very near start]&
\1\ar[d,"\true",\mono]\\
X\times X\ar[r,"\ch{F}\times \ch{F}"]&\Omega \times \Omega\ar[r,"\meet"]&\Omega
\end{tikzcd}
\]
and
\[
\begin{tikzcd}
F\times F\ar[r,"\meet_{F}"]\ar[d,\mono,"m\times m"]\arrow[dr, phantom,""{pullback}, very near start]&
F\ar[r,"\excl"]\ar[d,\mono,"m"]\arrow[dr, phantom,""{pullback}, very near start]&
\1\ar[d,\mono,"\true"]\\
X\times X\ar[r,"\meet"]&X\ar[r,"\ch{F}"]&\Omega.
\end{tikzcd}
\] This shows that $\meet\circ (\ch{F}\times \ch{F})$ and $\ch{F}\circ \meet$ are the characteristic map of the same subobject $m\times m\colon F\times F \subob X\times X$.
\end{proof}

We conclude this appendix by rephrasing Proposition \ref{PropUniversalityOfSubobjectClassifierAsSemilattice} in terms of the representability of a functor. For a locally small topos $\E$, there is a contravariant functor from the category of internal semilattices in $\E$ to the category of sets $\Set$ that sends an internal semilattice $X$ to the set of all internal filters of $X$. The subobject classifier $\Omega$ represents this functor, with the universal element 
\[\true\colon \1\subob \Omega .\]

\section{Existence theorem for a local state classifier}\label{AppendixExistenceTheorem}
In this appendix, we show an existence theorem for a local state classifier. The theorem is strong enough to prove the existence of a local state classifier not only for a Grothendieck topos, which is concretely constructed in subsection \ref{subsectionLSCofGrothendiecktopos}, but also for a category of models of an equational theory (Example \ref{ExampleexistenceofLSCofEquational}).

Our method is similar to the proof of the general adjoint functor theorem. We reduce a local state classifier to a colimit of a small diagram under the assumption that there exists a set of objects that \dq{generates} the category in some sense. To make it precise, we first define the notion of \dq{mono-density}.

First, recall the notion of a dense full subcategory. Let $\Sigma \subset \ob{\C}$ be a set of objects of a category $\C$. By abuse of language, let $\Sigma$ also denote a full subcategory of $\C$ consisting of objects in $\Sigma$. Then, the full subcategory $\Sigma$ is said to be dense, if the following diagram
\[
\begin{tikzcd}
\Sigma\ar[rr,\mono,""'{name=A}]\ar[rd,\mono]&&\C\\
&\C\ar[ru,"\id{\C}"']\ar[Leftarrow, "\id{}",to = A]&
\end{tikzcd}
\]
is a pointwise left Kan extension.

Our assumption for the existence theorem, \dq{mono-density,} is a variant of this density, regarding monomorphisms. In the following definition, $\mo{\Sigma}$ denotes the full subcategory of $\mo{\C}$ spanned by objects in $\Sigma$.
\begin{definition}(Mono-density)\label{DefinitionMonoDensity}
    For a category $\C$ and a set of objects $\Sigma \subset \ob{\C}$ is \emph{mono-dense} if the following diagram
\[
\begin{tikzcd}
\mo{\Sigma}\ar[rr,\mono,""'{name=A}]\ar[rd,\mono]&&\C\\
&\mo{\C}\ar[ru,\mono]\ar[Leftarrow, "\id{}",to = A]&,
\end{tikzcd}
\]
(where all functors above are the canonical inclusions) is a pointwise left Kan extension.
\end{definition}

In other words, $\Sigma$ is mono-dense if and only if, for any object $X\in \ob{\C}$, $X$ is the colimit of the following functor
\[
\begin{tikzcd}
\comma{\mo{\Sigma}}{X}\ar[r]&\mo{\Sigma}\ar[r,\mono]&\C
\end{tikzcd}
\]
with the canonical cocone, where the domain $\comma{\mo{\Sigma}}{X}$ denotes the comma category of 
\[
\begin{tikzcd}
\mo{\Sigma}\ar[r,\mono]&\mo{\C}&1\ar[l,"X"'].
\end{tikzcd}
\]
Since $\comma{\mo{\Sigma}}{X}$ is equivalent to the poset of subobjects of $X$ that belong to $\Sigma$, mono-density of $\Sigma$ means that every object of $\C$ is the colimit of all subobjects in $\Sigma$, in a canonical way.

\begin{proposition}[Existence theorem]\label{ExistenceTheorem}
If a locally small and cocomplete category $\C$ has a mono-dense (small) set of objects $\Sigma \subset \ob{\C}$, then $\C$ has a local state classifier.
\end{proposition}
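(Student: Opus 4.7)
The plan is to mimic the strategy of the general adjoint functor theorem: reduce the large colimit defining $\Xi$ to a small one indexed by $\mo{\Sigma}$, which exists by cocompleteness. Mono-density then lets one transport cocones on the small diagram uniquely to locally determined cocones on all of $\C$.

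Concretely, I would first form $\Xi \defeq \colim_{U \in \mo{\Sigma}} U$, which exists because $\Sigma$ is a small set and $\C$ is locally small (so $\mo{\Sigma}$ is small) and cocomplete. Denote the colimit cocone by $\{\psi_U : U \to \Xi\}_{U \in \Sigma}$. Next, for each object $X \in \ob{\C}$, mono-density presents $X$ as the colimit of the canonical diagram $\comma{\mo{\Sigma}}{X} \to \C$, $(U,m) \mapsto U$, with cocone given by the subobject inclusions $m : U \subob X$. Composing with $\{\psi_U\}$ yields a cocone on this small diagram with apex $\Xi$ (the cocone condition over $\comma{\mo{\Sigma}}{X}$ reduces to the cocone condition over $\mo{\Sigma}$), so the universal property of $X$ as a colimit induces a unique $\psi_X : X \to \Xi$ characterized by $\psi_X \circ m = \psi_U$ for every $(U,m) \in \comma{\mo{\Sigma}}{X}$. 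For $X \in \Sigma$ itself, taking $(X, \id{X})$ shows this $\psi_X$ agrees with the original.

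The main verification is that $\{\psi_X\}_{X \in \ob{\C}}$ forms a locally determined cocone and is universal. Compatibility is straightforward: for any monomorphism $f : X \subob Y$ and any $(U, m) \in \comma{\mo{\Sigma}}{X}$, the composite $(U, f \circ m)$ lies in $\comma{\mo{\Sigma}}{Y}$, so $\psi_Y \circ f \circ m = \psi_U = \psi_X \circ m$; since the family $\{m\}$ is a colimit cocone on $X$ (hence jointly epic in the appropriate sense), we conclude $\psi_Y \circ f = \psi_X$. For universality, given any locally determined cocone $\{\psi'_X : X \to \Psi'\}$, restriction to $\mo{\Sigma}$ yields a cocone on the small diagram, inducing a unique cocone map $l : \Xi \to \Psi'$. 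One then checks $l \circ \psi_X = \psi'_X$ for every $X$ by observing that $l \circ \psi_X \circ m = l \circ \psi_U = \psi'_U = \psi'_X \circ m$ for each $(U,m) \in \comma{\mo{\Sigma}}{X}$ and invoking the universal property of the colimit $X$.

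I expect the main obstacle to be purely bookkeeping: carefully verifying that restriction along $\mo{\Sigma} \hookrightarrow \mo{\C}$ induces a bijection between cocones, with the extension direction provided by the construction above. No deep new ingredient is needed beyond mono-density and the universal properties of the small colimits involved; conceptually, mono-density is precisely the hypothesis ensuring that a locally determined cocone is determined by its restriction to $\Sigma$, which is exactly the content needed to upgrade the small colimit $\Xi$ into a bona fide local state classifier.
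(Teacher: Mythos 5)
Your proposal is correct and follows essentially the same route as the paper: both reduce the large colimit over $\mo{\C}$ to the small colimit over $\mo{\Sigma}$ (which exists by local smallness and cocompleteness) and then use mono-density to extend the resulting cocone uniquely to a locally determined cocone on all of $\C$. The only difference is presentational — the paper packages the argument as a composition of pointwise left Kan extensions, whereas you unpack the same universal properties explicitly via the comma categories $\comma{\mo{\Sigma}}{X}$ and colimit cocones.
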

\begin{proof}
Since $\C$ is locally small and $\Sigma$ is a (small) set, the cocompleteness of $\C$ implies that there is the following left Kan extension ($=$ the colimit of $\mo{\Sigma}\subob \C$),
\[
\begin{tikzcd}
\mo{\Sigma}\ar[rr,\mono,""'{name=A}]\ar[rd,"\excl"']&&\C\\
&1\ar[ru,"\Xi"']\ar[Leftarrow, "\tau",to = A]&.
\end{tikzcd}
\]

By the mono-density of $\Sigma$ and the universal property of the left Kan extension, the above diagram, which is equal to 
\[
\begin{tikzcd}
\mo{\Sigma}\ar[rrr,\mono,""'{name=A}]\ar[rd,\mono]&&&\C\\
&\mo{\C}\ar[rd,"\excl"']&&\\
&&1\ar[ruu,"\Xi"']\ar[Leftarrow, "\tau",to = A]&
\end{tikzcd}
\]
is uniquely factored as
\[
\begin{tikzcd}
\mo{\Sigma}\ar[rrr,\mono,""'{name=A}]\ar[rd,\mono]&&&\C\\
&\mo{\C}\ar[rd,"\excl"']\ar[rru,\mono, ""'{name=B}]\ar[Leftarrow, "\id{}",to = A]&&\\
&&1\ar[ruu,"\Xi"']\ar[Leftarrow, "\exists \excl \ \xi", to = B]&.
\end{tikzcd}
\]

Then,  
\[
\begin{tikzcd}
\mo{\C}\ar[rr,\mono,""'{name=A}]\ar[rd,"\excl"']&&\C\\
&1\ar[ru,"\Xi"']\ar[Leftarrow, "\xi",to = A]&
\end{tikzcd}
\]
defined as above, is a left Kan extension, and gives a local state classifier of $\C$.
\end{proof}

\begin{example}(Grothendieck topos)
Now we obtain another proof of the existence of a local state classifier of a Grothendieck topos, which is already proved in subsection \ref{subsectionLSCofGrothendiecktopos}.
For a small site $(\C, J)$, we say a $J$-sheaf $P$ is \emph{generated by one element}, if there is an object $c\in \ob{C}$ and an epimorphism $\ay{c}\quo P$ in $\Sh{\C,J}$. 
We define $\Sigma$ as the (essentially small) set of all $J$-sheaves generated by one element. 

The set $\Sigma$ is mono-dense because of Lemma \ref{LemmaGenerateSubsheaf}. (In order to check this, one can use the fact that the (fully faithful) inclusion functor $\Sh{C,J}\to \ps{\C}$ reflects all colimits.) Therefore, we can apply Proposition \ref{ExistenceTheorem} to $\Sh{C,J}$ and obtain a local state classifier.
\end{example}

\begin{example}[Category of models of an equational theory]\label{ExampleexistenceofLSCofEquational}
For any equational theory $T$, the category of $T$-algebras, $T$-$\mathrm{Alg}$, has a local state classifier. It is because $T$-$\mathrm{Alg}$ is locally small and cocomplete (for example, see \cite[][COROLLARY 5.6.14.]{riehl2017category} or \cite[][ section 3.4]{borceux1994handbook}), the set $\Sigma$ of all finitely generated $T$-algebras is essentially small, and every $T$-algebra is the colimit of its finitely generated subalgebras in a canonical way. This example includes Example \ref{ExampleOfLSCOfGroupActionTopos} and Example \ref{ExampleLSCofPointedSets}. 

However, a local state classifier of a category of familiar algebras tends to be a terminal object. For example, local state classifiers of the category of groups $\mathrm{Group}$, the category of rings $\mathrm{Ring}$, the category of commutative rings $\mathrm{CRing}$, the category of monoids $\mathrm{Monoid}$, the category of abelian groups $\mathrm{Ab}$, the category of lattices $\mathrm{Lattice}$, and the category of vector spaces $\mathrm{Vect}_{\mathbb{K}}$ are terminal.
\end{example}




\printbibliography
\end{document}